\documentclass[12pt,a4paper]{article}

\usepackage{amssymb}
\usepackage{amsfonts}
\usepackage{theorem}
\usepackage{bbm}
\usepackage{graphicx}
\usepackage{amsmath}
\usepackage{enumerate}
\usepackage{indentfirst}
\usepackage{color}
\usepackage{hyperref}

\newtheorem{theorem}{Theorem}
\newtheorem{remark}{Remark}
\newtheorem{lemma}{Lemma}
\newtheorem{prop}{Proposition}

\newenvironment{proof}[1][Proof]{\textbf{#1.} }{\hfill $\square$}

\newcommand{\R}{\mathbbm{R}}

\newcommand{\eps}{\varepsilon}

2

\title{Homogenization of nonlocal equations in randomly evolving media. Diffusion approximation.}
\author{
M. Kleptsyna \thanks{Le Mans Universit\'e, Laboratoire Manceau de Math\'ematiques, Avenue Olivier Messiaen, 72085 Le Mans, Cedex 9, France.\hfill\break
e-mail: {\tt marina.kleptsyna@univ-lemans.fr}\hfill\break
e-mail: {\tt alexandre.popier@univ-lemans.fr}
}
\,\,\, A. Piatnitski \thanks{The Arctic University of Norway, campus Narvik, Norway \\  and \ Higher School of Modern Mathematics MIPT, Moscow, Russia\hfill\break
e-mail: \ {\tt apiatnitski@gmail.com}
\hfill\break
}
\,\,\, and A. Popier\footnotemark[3]
}

\date{\today}

\begin{document}
\maketitle

\begin{abstract}
The paper deals with homogenization and higher order approximations of solutions to  nonlocal evolution equations
of convolution type whose coefficients are periodic in the spatial variables and random stationary in time.  We assume that the convolution kernel  has finite moments up to order three. Under proper mixing assumptions, we study the limit behavior of the normalized difference between solutions of the original and the homogenized problems and show that this difference converges to the solution of a linear stochastic partial differential equation.
\end{abstract}

{\it AMS classification:} 35B27 (	Homogenization in context of PDEs; PDEs in media with periodic structure) 35K99 (Parabolic equations and parabolic systems), 47G99 (Integral, integro-differential, and pseudodifferential operators)

{\it Keywords:} homogenization, convolution type operator.


\section{Introduction}

In the paper we study homogenization problem for evolution equations of the form
\begin{equation}\label{intr_eq}
\partial_ t u(x,t)=
 \eps^{-(d+2)}\int_{\R^d} a\left(\frac{x-y}\eps \right) \Lambda\left( \frac x\eps, \frac y\eps,\frac t{\eps^\alpha} \right) \left( u(y,t)-u(x,t)\right) dy,
\end{equation}
$(x,t)\in\mathbb R^d\times(0,T]$.
Here $\eps$ is a small positive parameter, $a$ is a non-negative integrable function in $\mathbb R^d$ that has finite
moments up to order three, and $\Lambda(\xi,\eta,s)$ is a positive bounded function which is periodic in $\xi$ and $\eta$
and random stationary ergodic in $s$.

Convolution type operators appear in various applications in such fields as population biology and mechanics of porous media.
One of the models widely used in population biology is the so-called contact model. In this model Equation
\eqref{intr_eq} describes the evolution of the density of a population, see \cite{KKP2008}.

In the mathematical literature homogenization problems for zero-order convolution type operators with periodic
coefficients were studied in \cite{PiaZhi17} and, in the non-symmetric case, in \cite{piat_zhiz_19aa}. It was shown that
in the symmetric case the problem admits homogenization, and in the limit one has a second order elliptic differential operator with
constant coefficients.  In the non-symmetric case the homogenization result holds in moving coordinates.
$\Gamma$-convergence problems for convolution-type functionals in media with a periodic microstructure have been addressed in \cite{BraPia22} and \cite{BCPE2021}.
The work \cite{MR4053030} deals with stochastic homogenization of symmetric convolution-type operators. It is proved that
in statistically homogeneous ergodic media the homogenization result holds almost surely, and the limit operator is a deterministic second order elliptic operator with constant coefficients.

\medskip

A Cauchy problem for Equation \eqref{intr_eq} with $\Lambda(\xi,\eta,s)$ being periodic in $\xi,\ \eta$ and stationary
ergodic in $s$ was investigated in the recent work \cite{piat:zhiz:25}. If $\Lambda(\xi,\eta,s)=\Lambda(\eta,\xi,s)$ for
all $s$, then the problem admits homogenization almost surely, and the effective equation is deterministic.
In the non-symmetric case, under additional mixing conditions, the  solutions of the $\eps$-problem converge in law
in a properly chosen moving frame, the limit equation being a stochastic partial differential equation with a finite-dimensional
multiplicative noise.
If the limit is deterministic, it is natural to raise the questions about the rate of convergence and the behavior of the corresponding normalized difference. In the present work we address this question. To our best knowledge diffusion approximation problems for convolution type operators with oscillating coefficients
have not been studied in the existing literature.

\medskip

In the case of parabolic differential operators whose coefficients are rapidly oscillating functions of both  spacial and temporal variables, this question has already been thoroughly investigated.
To begin with, homogenization problems for divergence form parabolic differential operators with the coefficients which are periodic both in spacial and temporal variables were considered in \cite{MR503330} and \cite{zbMATH03921085}. 
The work \cite{DP05} focused on operators with lower order terms under diffusion scaling of the coefficients, while \cite{Garn} dealt with homogenization of convection-diffusion equations under non-diffusion scaling.

Parabolic differential equations with coefficients which are periodic
in spatial variables and
random stationary in time were studied in \cite{KP_2002} in the case of diffusive scaling $\alpha=2$ and in \cite{KP_1995} for $\alpha\not=2$. 
In the case of a deterministic limit problem the behavior of the normalized difference between solutions of the original
and homogenized problems was investigated in \cite{KPP_2015}: 
under additional mixing assumptions the studied normalized difference converges in law
to a solution of the limit stochastic partial differential equation (SPDE in short). This result has been extended for the non-diffusive scaling $\alpha \neq 2$ in the papers \cite{KPP22} and  \cite{KPP24}.

\medskip
In the present paper, we only consider the diffusive scaling $\alpha=2$, the other cases are left for further research. A detailed description of the framework of our study is provided in Section \ref{s_setup}.

First we study the symmetric case $a(-z)=a(z)$, $z\in\mathbb R^d$,
and $\Lambda(\xi,\eta,s)=\Lambda(\eta,\xi,s)$ for all $\xi, \, \eta, \, s$. As was proved in \cite{piat:zhiz:23a},
in this case a solution to the Cauchy problem for  equation \eqref{intr_eq} equipped with the initial condition $u(\cdot,0)=\imath$,
$u_0\in L^2(\mathbb R^d)$, converges in $L^2(\mathbb R^d\times[0,T])$ to a solution  of the following Cauchy problem
$$
\partial_t u=\mathrm{div}\big(\Theta^{\text{eff}}\nabla u\big), \quad u(\cdot,0)=\imath;
$$
 where $\Theta^{\text{eff}}$ is a positive definite constant deterministic matrix. Our first goal in this paper is to study the asymptotic behaviour of the normalized difference
 \begin{equation}\label{no_di}
U^\eps:= \frac1\eps \big(u^\eps-\hat u^\eps\big),
 \end{equation}
 where $u^\eps$ is a solution of the original Cauchy problem, and $\hat u^\eps$ is the sum of the two leading terms of the asymptotic expansion of $u^\eps$.  We show that under additional assumptions on the rate of decay of the mixing coefficients of $\Lambda$ the function $U^\eps$ converges in law as $\eps\to0$ to a solution of the limit stochastic partial differential equation with an additive noise (Theorem \ref{thm:main_result}). Further details can be found in Section \ref{s_sym_case}.

 In the non-symmetric case the homogenization takes place in moving coordinates, and the limit problem need not
 be deterministic, see \cite{piat:zhiz:25}. In this case we consider only those kernels for which the limit problem is deterministic.
 For such kernels we define the normalized difference $U^\eps$ in the same way as in \eqref{no_di}, and study
 the asymptotic behaviour of $U^\eps$ as $\eps\to0$. Also, we provide sufficient conditions under which the limit
 is deterministic. This is the subject of Section \ref{s_no_sy} and the main result is Theorem \ref{thm:main_result_non_symm}.

%


\section{The setup} \label{s_setup}

Let $(\Omega,\mathcal{F},{\bf P})$ be a standard probability space equipped with a measure preserving ergodic dynamical system $\mathcal{T}_s$, $s\in\mathbb R$.

Given a measurable function $\boldsymbol{\Lambda}(x,y,\omega)$, $(x,y)\in\mathbb R^d\times\mathbb R^d$, $\omega\in\Omega$, which is periodic in $(x,y)$ variables with a period one in each coordinate direction, we define
a random field $\Lambda(x,y,s)$ by
$$
\Lambda(x,y,s)=\boldsymbol{\Lambda}(x,y,\mathcal{T}_s\omega).
$$
Then $\Lambda(x,y,s)$ is periodic in $(x,y)$ and stationary ergodic  in $s$.

We consider the  operator
\begin{equation} \label{eq:operator}
L_t  \phi(x)  =  \int_{\R^d} a\left( x-y \right) \Lambda\left( x, y, t \right) \left( \phi(y)-\phi(x)\right) dy,
\end{equation}
the rescaled operator
\begin{equation*}
L^\eps_t  \phi(x)  = \dfrac{1}{\eps^{d+2}} \int_{\R^d} a\left( \dfrac{x-y}{\eps}\right) \Lambda\left( \dfrac{x}{\eps},  \dfrac{y}{\eps},  \dfrac{t}{\eps^2} \right) \left( \phi(y)-\phi(x)\right) dy,
\end{equation*}
and the following Cauchy problem in $\mathbb R^d\times(0,T]$, $T>0$\,:
\begin{equation} \label{eq:u^eps}
\frac{\partial u^\eps}{\partial t} = L^\eps_t u^\eps ,   \qquad
u^\eps (x,0) =  \imath(x)
\end{equation}
with a small positive parameter $\eps $.

We assume that the coefficients in (\ref{eq:u^eps}) possess the following properties.
\begin{itemize}
\item[{\bf H1}]
The function $a : \R^d \to \R$ is non-negative and
$$
\int_{\R^d} a(z) dz = 1,\quad \int_{\R^d} a\left( z \right) |z|^3 dz < +\infty.
$$
%
\item[{\bf H2}] $\Lambda$ is periodic both in $x$ and $y$ variables. Moreover, there are constants $\Lambda^->0$ and $\Lambda^+\geqslant \Lambda^-$ such that, for any $(x,y,s)$,
$$
0< \Lambda^-\leq \Lambda(x,y,s) \leq \Lambda^+.
$$
\end{itemize}
Without loss of generality, we assume that the period is equal to one for each coordinate direction.

Setting
$\mathcal{F}_{\leq r}=\sigma\{\Lambda(x,y,s)\,:\,s\leq r\}$ and
$\mathcal{F}_{\geq r}=\sigma\{\Lambda(x,y,s)\,:\,s\geq r\}$, we define
$$
\rho(r)=\sup\limits_{\xi_1,\xi_2}{\bf E}(\xi_1\xi_2)
$$
where the supremum is taken over all $\mathcal{F}_{\leq 0}$-measurable $\xi_1$ \, and $\mathcal{F}_{\geq r}$-measurable
$\xi_2$ such that ${\bf E}\xi_1={\bf E}\xi_2=0$, and  ${\bf E}\{(\xi_1)^2\}={\bf E}\{(\xi_2)^2\}=1$.  We then assume that
\begin{itemize}
\item[{\bf H3}] The function $\rho$ satisfies the estimate
$
\displaystyle \int_0^\infty\rho(r)dr<+\infty.
$
\item[{\bf H4}] $\imath$ is a Schwartz class function. 
\end{itemize}

In this paper $\nabla^{(n)}$ stands for the iterating operator: $\underbrace{\nabla \cdots \nabla}_{n\ \mbox{{\tiny times}}}$.

\section{The symmetric case}
\label{s_sym_case}

In this section we assume that the coefficients $a(\cdot)$ and $\Lambda(\cdot)$ satisfy the following symmetry conditions:
\begin{itemize}
\item[{\bf H5}]  The function $a$ is even, and $\Lambda$  is symmetric: $\Lambda(x,y,s) = \Lambda(y,x,s)$.
\end{itemize}

We define the unique periodic in $\xi$ and stationary solution of the equation:
\begin{equation} \label{eq:first_corrector}
\partial_s \chi_1 (\xi, s) - (L_s \chi_1 ) \left( \xi,  s \right) =-\int_{\R^d} a\left( z\right) \Lambda\left( \xi,  \xi -z,  s \right) z dz,
\end{equation}
such that $\int_{\mathbb T^d} \chi_1(\xi,s)d\xi = 0$.
The main result of this section is
\begin{theorem} \label{thm:main_result}
Under conditions {\rm \bf{H1}}--{\rm \bf{H5}} , there exist
\begin{itemize}
\item a constant positive definite matrix $\Theta^{{\rm eff}}$ (given by \eqref{eq:effective_matrix}),
\item a symmetric constant positive semi-definite $d^2\times d^2$ matrix $A^{{\rm eff}} $
\item a constant tensor $H^{\rm eff}=\{H^{\rm eff}_{jkl}\}_{j,k,l=1}^d$ (given by \eqref{eq:effective_drift}),
\end{itemize}
such that the process $\dfrac{1}{\eps} (u^\eps-u^0) -  \chi_1 \left( \dfrac{x}{\eps}, \dfrac{t}{\eps^2} \right) \nabla u^0$ converges in law in $L^2(\mathbb R^d\times(0,T))$ to the unique solution of the following SPDE:
\begin{equation}\label{eq:limit_SPDE}
\begin{array}{c}
\displaystyle
dv_t - {\rm div} ( \Theta^{{\rm eff}} \nabla v) dt = A^{{\rm eff}} \nabla \nabla u^0\,dW_t + H^{{\rm eff}} \nabla \nabla \nabla u^0 dt,\\
v(x,0)=0,
\end{array}
\end{equation}
where $u^0$ satisfies the equation
\begin{equation} \label{eq:homo_pb}
\partial_t u^0 - {\rm div} ( \Theta^{{\rm eff}} \nabla u^0) = 0,
\end{equation}
with the initial condition $\imath$.
\end{theorem}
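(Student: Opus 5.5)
We follow the asymptotic expansion strategy of \cite{KPP_2015}, adapted to the convolution setting. First we construct the ansatz
\[
u^\eps(x,t)\approx u^0(x,t)+\eps\,\chi_1\Big(\frac x\eps,\frac t{\eps^2}\Big)\nabla u^0(x,t)+\eps^2\,\chi_2\Big(\frac x\eps,\frac t{\eps^2}\Big)\nabla\nabla u^0(x,t)+\eps^3\,\chi_3\Big(\frac x\eps,\frac t{\eps^2}\Big)\nabla^{(3)}u^0(x,t).
\]
Here $\chi_1$ is the corrector from \eqref{eq:first_corrector}. The existence of a periodic stationary solution of $\partial_s\chi-L_s\chi=f$ with zero mean over $\mathbb T^d$ follows whenever $f$ has zero mean in $\xi$. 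The key ingredients are the spectral gap of $-L_s$ on mean-zero periodic functions, which is uniform in $s$ by {\bf H2} and symmetry {\bf H5}, and exponential decay of the corresponding evolution family.

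Substituting $\hat u^\eps=u^0+\eps\chi_1\nabla u^0$ into \eqref{eq:u^eps}, we Taylor expand $u^0(y)$ around $x$ inside the convolution. The third moment from {\bf H1} allows us to expand to third order with a controlled remainder. We collect terms by powers of $\eps$:
\begin{itemize}
\item The $\eps^{-1}$ terms cancel by \eqref{eq:first_corrector}.
\item The $\eps^0$ terms produce a random stationary matrix field $\Theta(\xi,s)$ contracted with $\nabla\nabla u^0$. Its mean $\mathbf E\int_{\mathbb T^d}\Theta\,d\xi$ is $\Theta^{\rm eff}$, defined by \eqref{eq:effective_matrix}.
\item The $\eps^{1}$ terms produce a third-order tensor field contracted with $\nabla^{(3)}u^0$.
\end{itemize}

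We split $\Theta-\Theta^{\rm eff}$ into its periodic-mean-zero part and its spatial average. The spatial average $\bar\Theta(s)-\Theta^{\rm eff}$ is a stationary, mean-zero process in $s$ alone. The periodic part is removed by a second corrector $\chi_2$ solving $\partial_s\chi_2-L_s\chi_2=\Theta-\bar\Theta$, which is legitimate because the right-hand side has zero $\xi$-average. The same procedure at order $\eps$ gives $\chi_3$ together with an averaged third-order field, whose expectation defines $H^{\rm eff}$ in \eqref{eq:effective_drift}.

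The equation for $V^\eps=\eps^{-1}(u^\eps-u^0)-\chi_1\nabla u^0$, with $\chi_2,\chi_3$ terms moved to the remainder, then reads
\[
\partial_tV^\eps-L^\eps_tV^\eps=\frac1\eps\Big(\bar\Theta\big(\tfrac t{\eps^2}\big)-\Theta^{\rm eff}\Big)\nabla\nabla u^0+H\big(\tfrac t{\eps^2}\big)\nabla^{(3)}u^0+R^\eps.
\]
The initial data of this equation are $-\chi_1(x/\eps,0)\nabla\imath$. Its contribution converges weakly to zero, because it oscillates in $x$ with zero periodic mean and the evolution is $L^2$-contractive. The terms involving $\chi_2,\chi_3$ are of order $\eps$ in $L^2$ after dividing by $\eps$. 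To see this, we use the energy estimate for $\partial_t-L^\eps_t$, which by symmetry gives an $L^2$ bound uniform in $\eps$, together with boundedness of the correctors.

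Next we identify the limit of the forcing terms. By {\bf H3}, the functional CLT for stationary mixing processes gives
\[
\int_0^t\frac1\eps\Big(\bar\Theta\big(\tfrac s{\eps^2}\big)-\Theta^{\rm eff}\Big)ds=\eps\int_0^{t/\eps^2}\big(\bar\Theta(r)-\Theta^{\rm eff}\big)dr\Rightarrow A^{\rm eff}W_t,
\]
where $A^{\rm eff}$ is the symmetric square root of the covariance $\int_{-\infty}^{\infty}\mathbf E[(\bar\Theta(0)-\Theta^{\rm eff})\otimes(\bar\Theta(r)-\Theta^{\rm eff})]\,dr$. This covariance is positive semidefinite as a limiting covariance. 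The boundedness of $\Lambda$ and of the correctors makes all moments available. Meanwhile $H(t/\eps^2)\nabla^{(3)}u^0\to H^{\rm eff}\nabla^{(3)}u^0$ weakly, almost surely, by the ergodic theorem.

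Finally we pass to the limit, and this is the main obstacle, since the forcing is only a distribution-valued weak limit and the operator $L^\eps_t$ itself oscillates randomly in time. We plan to do this as follows:
\begin{itemize}
\item We establish tightness of $V^\eps$ in $L^2(\mathbb R^d\times(0,T))$ with the weak topology, or in $C([0,T];H^{-s})$. Strong compactness in $x$ is not available from the nonlocal operator. We obtain it from uniform $L^2$ bounds on $V^\eps$ and on a time-integrated version, combined with decay at infinity inherited from the Schwartz class of $\imath$ ({\bf H4}).
\item We identify the limit via a martingale problem, testing against $\varphi\in C_0^\infty$. The term $(V^\eps,L^\eps_t\varphi)$ is handled with the adjoint expansion: we use $\varphi+\eps\chi_1\nabla\varphi$ as a test function, so that $L^\eps_t$ applied to it converges to ${\rm div}(\Theta^{\rm eff}\nabla\varphi)$ up to a mean-zero oscillating term. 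We must show that the product of this oscillating term with $V^\eps$ vanishes in the limit. This requires a further corrector in time together with the mixing decorrelation from {\bf H3}.
\end{itemize}
Uniqueness of the solution of the linear SPDE \eqref{eq:limit_SPDE} with additive noise then gives convergence in law of the whole family.
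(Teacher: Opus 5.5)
The setup you describe agrees with the paper: correctors $\chi_1,\chi_2,\chi_3$, the splitting of the order-one field into a $\xi$-mean-zero part absorbed by $\chi_2$ and a time-only part $\widetilde\Theta(s)=\Theta(s)-\Theta^{\rm eff}$, the FCLT for $\kappa^\eps(t)=\eps^{-1}\int_0^t\widetilde\Theta(s/\eps^2)\,ds$, and the ergodic average giving $H^{\rm eff}$. The gap is the step you yourself call the main obstacle, passing to the limit. It is not carried out, and the plan you sketch would not prove the statement.

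(i) Tightness in weak $L^2$ or in $C([0,T];H^{-s})$, plus a martingale-problem identification, gives at best convergence in law for that weaker topology. The theorem asserts convergence in law in $L^2(\mathbb R^d\times(0,T))$ with its norm topology, and you concede that strong compactness is unavailable.

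(ii) The test function $\varphi^\eps=\varphi+\eps\chi_1(x/\eps,t/\eps^2)\nabla\varphi$ does not behave as claimed. Since $\partial_s\chi_1-L_s\chi_1=g$, one has $L^\eps_t\varphi^\eps=\eps^{-1}(g+L_s\chi_1)\nabla\varphi+O(1)=\eps^{-1}\partial_s\chi_1\nabla\varphi+O(1)$. In $(\partial_t+L^\eps_t)\varphi^\eps$, which is what enters $\frac{d}{dt}(V^\eps,\varphi^\eps)$ by self-adjointness, the $O(\eps^{-1})$ term is $2\eps^{-1}\partial_s\chi_1\nabla\varphi\neq0$. The dual computation needs the backward (anticipating) corrector solving $\partial_s\chi^*+L_s\chi^*=-g$. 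The remaining cross term with $V^\eps$ is exactly what you leave to an unspecified ``further corrector in time''.

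(iii) The uniform $L^2$ bound on $V^\eps$ that you need for tightness does not follow from the energy estimate either, since the forcing $\eps^{-1}\widetilde\Theta(t/\eps^2)\nabla\nabla u^0$ has size $\eps^{-1}$.

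The missing idea is that this forcing is a time-only stationary process times the deterministic field $\nabla\nabla u^0$, and $\nabla\nabla u^0$ itself solves the homogenized equation because $\Theta^{\rm eff}$ is constant. Hence $\kappa^\eps(t)\big[\nabla\nabla u^0+\eps\chi_1\nabla^{(3)}u^0+\eps^2\chi_2\nabla^{(4)}u^0\big]$ solves the equation for the stochastic part $R^{\eps,(1)}$ up to residuals of three kinds:
(a) terms that are $O(\eps)$ or carry the factor $\eps\kappa^\eps\to0$ a.s.;
(b) $-\widetilde\Theta\chi_1\nabla^{(3)}u^0$, which has zero $\xi$-mean and is absorbed by an auxiliary stationary corrector $\Xi$;
(c) $\kappa^\eps\widetilde\Theta(t/\eps^2)\nabla^{(4)}u^0$, controlled through $\int_0^t\kappa^\eps(s)\widetilde\Theta(s/\eps^2)\,ds=\frac\eps2(\kappa^\eps(t))^2$.
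This gives $R^{\eps,(1)}-\kappa^\eps\nabla\nabla u^0\to0$ in probability in $L^\infty(0,T;L^2(\mathbb R^d))$. Convergence in law in the norm topology then follows from the FCLT in $C([0,T])$ by continuous mapping, and the limit $A^{\rm eff}W_t\nabla\nabla u^0$ solves the SPDE by It\^o's formula. The drift part is treated the same way with $\vartheta^\eps(t)=\int_0^t(\langle H\rangle(s/\eps^2)-H^{\rm eff})\,ds$ and converges a.s. No tightness or martingale problem is needed.

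There are also two smaller gaps. First, \textbf{H3} does not directly yield the FCLT, because $\widetilde\Theta(s)$ depends on $\chi_1(\cdot,s)$ and hence on the whole past of $\Lambda$. You must replace $\chi_1$ by the corrector started at time $s/2$ and use exponential forgetting. Second, weak convergence of the initial data $-\chi_1(x/\eps,0)\nabla\imath$ together with contractivity does not make the corresponding solutions vanish. The actual mechanism is exponential decay in the fast time for the periodic problem with mean-zero data, which produces an initial layer of width $\eps^2$, negligible in $L^2(\mathbb R^d\times(0,T))$.
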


In the rest of this section we prove this result.  We consider the following formal asymptotic expansion for $u^\eps$:
\begin{align} \label{eq:expansion}
u^\eps(x,t) & = u^0(x,t) + \eps \chi_1 \left( \dfrac{x}{\eps}, \dfrac{t}{\eps^2} \right) \nabla u^0(x,t) \\ \nonumber
& + \eps^2 \chi_2\left( \dfrac{x}{\eps}, \dfrac{t}{\eps^2} \right) \nabla\nabla  u^0(x,t) + R^\eps(x,t).
\end{align}
Our goal is to introduce a stationary corrector $\chi_2$  in such a way  that $\dfrac{1}{\eps} R^\eps$ converges in a suitable space.  Let us first derive the equations satisfied by $\chi_2$ and $R^\eps$.

\subsection{Equation for the remainder $R^\eps$}

First note that for an arbitrary function $\kappa=\kappa\left( \dfrac{x}{\eps}, x\right)$, we have
\begin{align*}
L^\eps_t \kappa & =  \dfrac{1}{\eps^{d+2}} \int_{\R^d} a\left( \dfrac{x-y}{\eps}\right) \Lambda\left( \dfrac{x}{\eps},  \dfrac{y}{\eps},  \dfrac{t}{\eps^2} \right) \left( \kappa\left( \dfrac{y}{\eps}, y\right) - \kappa\left( \dfrac{x}{\eps}, x\right)\right) dy \\
& = \dfrac{1}{\eps^{2}} \int_{\R^d} a\left( z\right) \Lambda\left( \dfrac{x}{\eps},  \dfrac{x}{\eps} -z,  \dfrac{t}{\eps^2} \right) \left( \kappa\left( \dfrac{x}{\eps}-z, x-\eps z\right) - \kappa\left( \dfrac{x}{\eps}, x\right)\right) dz \\
& = \dfrac{1}{\eps^{2}} \int_{\R^d} a\left( z\right) \Lambda\left( \dfrac{x}{\eps},  \dfrac{x}{\eps} -z,  \dfrac{t}{\eps^2} \right) \left( \kappa\left( \dfrac{x}{\eps}-z, x\right) - \kappa\left( \dfrac{x}{\eps}, x\right)\right) dz \\
& +  \dfrac{1}{\eps^{2}} \int_{\R^d} a\left( z\right) \Lambda\left( \dfrac{x}{\eps},  \dfrac{x}{\eps} -z,  \dfrac{t}{\eps^2} \right) \left( \kappa\left( \dfrac{x}{\eps}-z, x-\eps z\right) - \kappa\left( \dfrac{x}{\eps}-z, x\right)\right) dz.
\end{align*}
Hence applying this for a function $\kappa$ of the form $\chi \left( \dfrac{x}{\eps}, \dfrac{t}{\eps^2} \right) \phi(x,t) $ leads to
\begin{align*}
L^\eps_t \kappa & =  \dfrac{1}{\eps^{2}} \int_{\R^d} a\left( z\right) \Lambda\left( \dfrac{x}{\eps},  \dfrac{x}{\eps} -z,  \dfrac{t}{\eps^2} \right) \left(\chi \left( \dfrac{x}{\eps} -z, \dfrac{t}{\eps^2} \right)  - \chi \left( \dfrac{x}{\eps}, \dfrac{t}{\eps^2} \right) \right) dz \phi(x,t)\\
&+  \dfrac{1}{\eps^{2}} \int_{\R^d} a\left( z\right) \Lambda\left( \dfrac{x}{\eps},  \dfrac{x}{\eps} -z,  \dfrac{t}{\eps^2} \right) \chi \left( \dfrac{x}{\eps} -z, \dfrac{t}{\eps^2} \right) \left( \phi\left( x- \eps z, t\right) - \phi\left(x, t\right)\right) dz \\
& =\dfrac{1}{\eps^2} (L_s \chi ) \left( \dfrac{x}{\eps},  s \right)\Big|_{s=\frac{t}{\eps^2}} \phi(x,t)  \\
& +  \dfrac{1}{\eps^{2}} \int_{\R^d} a\left( z\right) \Lambda\left( \dfrac{x}{\eps},  \dfrac{x}{\eps} -z,  \dfrac{t}{\eps^2} \right) \chi \left( \dfrac{x}{\eps} -z, \dfrac{t}{\eps^2} \right) \left( \phi\left( x- \eps z, t\right) - \phi\left(x, t\right)\right) dz .
\end{align*}
Notice that
$$L_t \phi(x) = L^1_t  \phi(x) =  \int_{\R^d} a\left( z \right) \Lambda\left( x, x-z, t \right) \left( \phi(x-z)-\phi(x)\right) dz.$$
If $\phi$ is a smooth function, then using Taylor's expansion, we obtain
\begin{align*}
\phi &\left( x- \eps z, t\right) - \phi\left(x, t\right) = - \eps \int_0^1 \nabla \phi(x-\eps q z,t) dq \\
& =- \eps z \nabla \phi(x,t) +\eps^2 \int_0^1 \nabla \nabla \phi(x - \eps q z, t) (z \otimes z) (1-q) dq\\
&=- \eps z \nabla \phi(x,t) +\frac12\eps^2 z\otimes z\nabla\nabla\phi(x)-\frac12\eps^3 \int_0^1 \nabla\nabla \nabla \phi(x - \eps q z, t) (z \otimes z\otimes z) (1-q)^2 dq.
\end{align*}
With the help of these relations we deduce that
\begin{align} \label{eq:space_deriv}
& L^\eps_t u^\eps(x,t) =- \dfrac{1}{\eps} \int_{\R^d} a\left( z\right) \Lambda\left( \dfrac{x}{\eps},  \dfrac{x}{\eps} -z,  \dfrac{t}{\eps^2} \right) z dz \nabla u^0 (x,t)\\ \nonumber
&\quad  +  \frac12\int_{\R^d} a\left( z\right) \Lambda\left( \dfrac{x}{\eps},  \dfrac{x}{\eps} -z,  \dfrac{t}{\eps^2} \right) (z \otimes z)  dz\, \nabla \nabla u^0(x , t) \\ \nonumber
&\quad  - \frac\eps2 \int_{\R^d} a\left( z\right) \Lambda\left( \dfrac{x}{\eps},  \dfrac{x}{\eps} -z,  \dfrac{t}{\eps^2} \right)  \int_0^1 \nabla \nabla u^0(x - \eps q z, t) (z \otimes z\otimes z) (1-q)^2 dq dz  \\ \nonumber
&\quad  + \dfrac{1}{\eps} (L_t \chi_1 ) \left( \dfrac{x}{\eps},  \dfrac{t}{\eps^2} \right) \nabla u^0(x,t) \\ \nonumber
&\quad  -  \int_{\R^d} a\left( z\right) \Lambda\left( \dfrac{x}{\eps},  \dfrac{x}{\eps} -z,  \dfrac{t}{\eps^2} \right) \chi_1 \left( \dfrac{x}{\eps} -z, \dfrac{t}{\eps^2} \right)\otimes z  \nabla \nabla u^0 \left( x, t\right)dz  \\ \nonumber
&\quad  -  \eps\int_{\R^d} a\left( z\right) \Lambda\left( \dfrac{x}{\eps},  \dfrac{x}{\eps} -z,  \dfrac{t}{\eps^2} \right) \chi_1 \left( \dfrac{x}{\eps} -z, \dfrac{t}{\eps^2} \right)\otimes z\otimes z \int_0^1 \nabla\nabla \nabla u^0 \left( x- \eps q z, t\right)(1-q) dq   dz \\ \nonumber
&\quad  +(L_t \chi_2 ) \left( \dfrac{x}{\eps},  \dfrac{t}{\eps^2} \right) \nabla\nabla u^0(x,t)  \\ \nonumber
&\quad  + \eps\int_{\R^d} a\left( z\right) \Lambda\left( \dfrac{x}{\eps},  \dfrac{x}{\eps} -z,  \dfrac{t}{\eps^2} \right) \chi_2 \left( \dfrac{x}{\eps} -z, \dfrac{t}{\eps^2} \right)\otimes z\int_0^1\nabla\nabla\nabla u^0 \left(x-\eps zq, t\right) \,dq dz \\ \nonumber
%
& \quad +L^\eps_t R^\eps(x,t) .
\end{align}
Moreover,
\begin{align} \label{eq:time_deriv}
\partial_t u^\eps(x,t) & = \partial_t  u^0(x,t) + \partial_t  R^\eps(x,t) + \dfrac{1}{\eps} \partial_s \chi_1 \left( \dfrac{x}{\eps}, \dfrac{t}{\eps^2} \right) \nabla u^0(x,t) \\  \nonumber
& + \partial_s \chi_2\left( \dfrac{x}{\eps}, \dfrac{t}{\eps^2} \right) \nabla\nabla  u^0(x,t) \\  \nonumber
& +  \eps \chi_1 \left( \dfrac{x}{\eps}, \dfrac{t}{\eps^2} \right) \partial_t \nabla u^0(x,t) + \eps^2 \chi_2\left( \dfrac{x}{\eps}, \dfrac{t}{\eps^2} \right) \partial_t \nabla\nabla  u^0(x,t) .
\end{align}
Due to the symmetry of $a$ and $\Lambda$, the function
\begin{equation}\label{def_g}
g(\xi, s)=-\int_{\R^d} a\left( z\right) \Lambda\left( \xi,  \xi -z,  s \right) z dz
\end{equation}
satisfies, for any $s\in\mathbb R$, the relation
$$
\int_{\mathbb T^d} g(\xi,s) d\xi = 0.
$$
Then, according to  \cite[Lemma 3.9 and Remark 3.3]{piat:zhiz:25}, 
Equation \eqref{eq:first_corrector} has a unique periodic in $\xi$ and stationary in $s$ solution $\chi_1\in L^\infty(-\infty,\infty;L^2(\mathbb T^d))$ .
Substituting in \eqref{eq:u^eps} the right-hand sides of  \eqref{eq:space_deriv} and \eqref{eq:time_deriv} for $L_t^\eps u^\eps$ and $\partial_t u^\eps$, respectively,     we deduce that if $\chi_1$ solves  \eqref{eq:first_corrector},
then the sum of the terms of order $\eps^{-1}$ in \eqref{eq:space_deriv} and \eqref{eq:time_deriv} vanishes, and we obtain
\begin{align} \label{eq:first_step}
& \partial_t  u^0(x,t) + \partial_t  R^\eps(x,t) + \partial_s \chi_2\left( \dfrac{x}{\eps}, \dfrac{t}{\eps^2} \right) \nabla\nabla  u^0(x,t) \\  \nonumber
& \qquad +  \eps \chi_1 \left( \dfrac{x}{\eps}, \dfrac{t}{\eps^2} \right) \partial_t \nabla u^0(x,t) + \eps^2 \chi_2\left( \dfrac{x}{\eps}, \dfrac{t}{\eps^2} \right) \partial_t \nabla\nabla  u^0(x,t) \\ \nonumber
& =  L^\eps_t R^\eps(x,t) +  \int_{\R^d} a\left( z\right) \Lambda\left( \dfrac{x}{\eps},  \dfrac{x}{\eps} -z,  \dfrac{t}{\eps^2} \right)(z \otimes z) dz\, \nabla \nabla u^0(x, t) \\ \nonumber
& \quad - \frac\eps2\int_{\R^d} a\left( z\right) \Lambda\left( \dfrac{x}{\eps},  \dfrac{x}{\eps} -z,  \dfrac{t}{\eps^2} \right)(z \otimes z\otimes z)
 \nabla\nabla\nabla u^0(x-\eps qz, t)(1-q)^2 dz \\ \nonumber
&\quad  -  \int_{\R^d} a\left( z\right) \Lambda\left( \dfrac{x}{\eps},  \dfrac{x}{\eps} -z,  \dfrac{t}{\eps^2} \right) \chi_1 \left( \dfrac{x}{\eps} -z, \dfrac{t}{\eps^2} \right)\otimes z dz\,\nabla\nabla u^0 \left( x, t\right)   \\ \nonumber
&\quad  - \eps \int_{\R^d} a\left( z\right) \Lambda\left( \dfrac{x}{\eps},  \dfrac{x}{\eps} -z,  \dfrac{t}{\eps^2} \right) \chi_1 \left( \dfrac{x}{\eps} -z, \dfrac{t}{\eps^2} \right)\otimes z\otimes z\int_0^1 \nabla\nabla\nabla u^0 \left( x- \eps q z, t\right)(1-q) dq   dz \\ \nonumber
&\quad  +(L_t \chi_2 ) \left( \dfrac{x}{\eps},  \dfrac{t}{\eps^2} \right) \nabla\nabla u^0(x,t)  \\ \nonumber
&\quad  +\eps \int_{\R^d} a\left( z\right) \Lambda\left( \dfrac{x}{\eps},  \dfrac{x}{\eps} -z,  \dfrac{t}{\eps^2} \right) \chi_2 \left( \dfrac{x}{\eps} -z, \dfrac{t}{\eps^2} \right)\otimes z\int_0^1 \left(  \nabla\nabla\nabla u^0\left( x- \eps q z, t\right)   \right) dz .
\end{align}
\begin{remark}
Here the third moment condition on $a(\cdot)$ was used. Without this condition we would only have the relation
$$
\int_{\R^d} a\left( z\right) \Lambda\left( \dfrac{x}{\eps},  \dfrac{x}{\eps} -z,  \dfrac{t}{\eps^2} \right)  \int_0^1 \left(  \nabla \nabla u^0(x - \eps q z, t)-\nabla\nabla u^0(x,t) \right) (1-q) dq (z \otimes z) dz = o(1),
$$
which is sufficient for homogenization of the studied problem, but not for the diffusion approximation.
\end{remark}
We define the following matrix-valued functions:
\begin{align} \nonumber
h(\xi,s) & =  \int_{\R^d} a\left( z\right) \Lambda\left( \xi,  \xi -z,  s \right)  \left( \dfrac{1}{2} (z \otimes z) - \chi_1 \left( \xi -z, s \right)\otimes z  \right)   dz, \\ \label{eq:effective_matrix}
\Theta(s) & = \int_{\mathbb T^d}  h(\xi,s) d\xi , \quad \Theta^{{\rm eff}} = \mathbb E (\Theta(s)) \quad \widetilde \Theta(s) = \Theta(s) - \Theta^{{\rm eff}} .
\end{align}
By stationarity, $\Theta^{{\rm eff}}$ does not depend on $s$.
By \cite[Lemma 3.8]{piat:zhiz:25}, there is a unique solution $\chi_2$ to the equation:
\begin{equation} \label{eq:second_corrector}
\partial_s \chi_2 (\xi, s) - (L_t \chi_2 ) \left( \xi,  s \right) = h(\xi,s) - \Theta(s).
\end{equation}
We define  $u^0$ as the solution of the homogenized problem 
that reads
$$
 \partial_t  u^0(x,t) =  \Theta^{{\rm eff}} \nabla \nabla u^0(x,t),\qquad u^0(x,0)=\imath(x),
$$
see also \eqref{eq:homo_pb}. 
Under our standing assumptions $u^0$ and its derivatives in time of any order are  Schwartz class functions in the variable $x$.
Using the definition of corrector $\chi_2$ in \eqref{eq:second_corrector} and Equation \eqref{eq:first_step}  for  $u^0$,  we derive from \eqref{eq:first_step} the following relation:
\begin{align} \label{eq:second_step}
& \partial_t  R^\eps(x,t)  - L^\eps_t R^\eps(x,t)  =  \widetilde \Theta \left( \dfrac{t}{\eps^2}\right) \nabla \nabla u^0(x,t)  \\ \nonumber
& \quad -  \eps \chi_1 \left( \dfrac{x}{\eps}, \dfrac{t}{\eps^2} \right) \partial_t \nabla u^0(x,t)  - \eps^2 \chi_2\left( \dfrac{x}{\eps}, \dfrac{t}{\eps^2} \right) \partial_t \nabla\nabla  u^0(x,t) \\ \nonumber
& \quad + \eps H\left( \dfrac{x}{\eps},\dfrac{t}{\eps^2}\right)  \nabla \nabla \nabla u^0(x , t) + \eps \int_{\R^d} a\left( z\right) \Lambda\left( \dfrac{x}{\eps},  \dfrac{x}{\eps} -z,  \dfrac{t}{\eps^2} \right) G^\eps(x,z,t) dz,
\end{align}
with
\begin{align*}
H(\xi,s)  & =  \int_{\R^d} a\left( z\right) \Lambda\left( \xi,  \xi -z,  s \right)  \times \\
&\hspace{3cm} \left[ - \dfrac{1}{6} (z \otimes z \otimes z)+\dfrac{1}{2} \chi_1 \left( \xi -z, s \right) \otimes z\otimes z)-\chi_2 \left( \xi -z, s\right)\otimes z \right] dz,
\end{align*}
and
\begin{align*}
& G^\eps(x,z,t)  =  - \left(  \int_0^1 \left( \nabla \nabla \nabla u^0(x - \eps q z, t) -  \nabla \nabla \nabla u^0(x , t) \right) \frac12  (1-q)^2 dq \right) (z \otimes z \otimes z) \\ \nonumber
&  \quad  + \chi_1 \left( \dfrac{x}{\eps} -z, \dfrac{t}{\eps^2} \right) \otimes z\otimes z \left( \int_0^1 \left( \nabla \nabla\nabla u^0 \left( x- \eps q z, t\right) -\nabla \nabla \nabla u^0(x , t) \right)  (1-q) dq  \right) \\\nonumber
&\quad  -\chi_2 \left( \dfrac{x}{\eps} -z, \dfrac{t}{\eps^2} \right) \otimes z \left( \int_0^1\left(  \nabla \nabla\nabla u^0\left( x- \eps q z, t\right)-\nabla \nabla \nabla u^0(x , t)\right)  dq \right).
\end{align*}
Note that the initial value for $R^\eps$ is given by:
$$R^\eps(x,0) = - \eps \chi_1 \left( \dfrac{x}{\eps},0\right) \nabla u^0(x,0) - \eps^2 \chi_2\left( \dfrac{x}{\eps}, 0 \right) \nabla\nabla  u^0(x,0).$$
Considering the  linearity of the equation, we split $R^\eps$ into two parts: $R^\eps = \eps \left[R^{\eps,(1)} +R^{\eps,(2)}\right] $ as follows:
\begin{itemize}
\item $R^{\eps,(1)}$ solves:
\begin{equation} \label{eq:stoch_part}
\partial_t  R^{\eps,(1)} (x,t)  - L^\eps_t R^{\eps,(1)} (x,t)  = \dfrac{1}{\eps} \widetilde \Theta \left( \dfrac{t}{\eps^2}\right) \nabla \nabla u^0(x,t) ,
\end{equation}
with zero initial condition.
\item $R^{\eps,(2)} $ solves:
\begin{align} \label{eq:drift_part}
&  \partial_t  R^{\eps,(2)}(x,t)  - L^\eps_t R^{\eps,(2)}(x,t)  = H\left( \dfrac{x}{\eps},\dfrac{t}{\eps^2}\right)  \nabla \nabla \nabla u^0(x , t)   \\ \nonumber
& \quad -  \chi_1 \left( \dfrac{x}{\eps}, \dfrac{t}{\eps^2} \right) \partial_t \nabla u^0(x,t)  - \eps \chi_2\left( \dfrac{x}{\eps}, \dfrac{t}{\eps^2} \right) \partial_t \nabla\nabla  u^0(x,t) , \\ \nonumber
& \quad +  \int_{\R^d} a\left( z\right) \Lambda\left( \dfrac{x}{\eps},  \dfrac{x}{\eps} -z,  \dfrac{t}{\eps^2} \right) G^\eps(x,z,t) dz
\end{align}
with initial condition $R^\eps(x,0)$.
\end{itemize}

\subsection{Convergence of $R^{\eps,(1)} $}

\begin{prop} \label{prop:convergence_random_part}
The process $R^{\eps,(1)} $ converges in law in $L^2(\R^d \times (0,T))$ to the solution of the SPDE:
\begin{equation} \label{eq:limit_SPDE_without_drift}
d v_t -{\rm div }( \Theta^{{\rm eff}} \nabla v ) dt= A^{{\rm eff}} dW_t \nabla\nabla u^0, \quad v(x,0) = 0.
\end{equation}
\end{prop}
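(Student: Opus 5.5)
The plan is to reduce \eqref{eq:stoch_part} to an equation with a random source that depends only on time and is slowly varying in $x$, and then to pass to the limit by combining a functional central limit theorem for $\widetilde\Theta$ with the homogenization of the operator $L^\eps_t$.

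\textbf{Step 1: the noise.} Set
$$
W^\eps(t)=\frac1\eps\int_0^t\widetilde\Theta\Big(\frac{r}{\eps^2}\Big)dr=\eps\int_0^{t/\eps^2}\widetilde\Theta(s)\,ds .
$$
The process $\widetilde\Theta(s)$ is stationary, centered, bounded (by {\bf H2} and the $L^\infty(L^2)$ bound on $\chi_1$) and $\mathcal F_{\le s}$-adapted. Together with {\bf H3}, a standard FCLT for strongly mixing stationary processes shows that $W^\eps$ converges in law in $C([0,T];\R^{d\times d})$ to $A^{\rm eff}W$. Here $W$ is a standard $d^2$-dimensional Brownian motion, and $(A^{\rm eff})^2=\int_{-\infty}^{\infty}\mathbf E\big(\widetilde\Theta(0)\otimes\widetilde\Theta(s)\big)ds$; this matrix is symmetric and positive semidefinite.

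\textbf{Step 2: removing the singular source.} We cannot pass to the limit directly, because the source is of order $\eps^{-1}$. Instead we write $R^{\eps,(1)}=W^\eps(t)\nabla\nabla u^0+\eps\,\zeta(\tfrac x\eps,\tfrac t{\eps^2})W^\eps(t)\nabla\nabla\nabla u^0+Z^\eps$, where $\zeta=\chi_1$ plays the role of a corrector. Then $Z^\eps$ solves $\partial_tZ^\eps-L^\eps_tZ^\eps=F^\eps$, with a right-hand side of the form $\big(h(\tfrac x\eps,\tfrac t{\eps^2})-\Theta^{\rm eff}\big)W^\eps\nabla^{(4)}u^0+\Theta^{\rm eff}W^\eps\nabla^{(4)}u^0$ minus the corresponding time-derivative terms, plus $O(\eps)$ errors. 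This uses the same Taylor expansion as in \eqref{eq:space_deriv}; it is justified because $u^0$ is Schwartz and $W^\eps$ is tight in $C$. Each $\eps$-term involving $\chi_1$ or $\partial_s\chi_1$ is balanced by \eqref{eq:first_corrector}, exactly as in the derivation of \eqref{eq:first_step}. If that balance becomes awkward, a simpler route is to work in the weak formulation instead. We test \eqref{eq:stoch_part} against $\varphi(x)+\eps\chi_1(\tfrac x\eps,\tfrac t{\eps^2})\nabla\varphi(x)$ with $\varphi\in C_0^\infty$, integrate in time, and sum by parts in the term $\eps^{-1}\int\widetilde\Theta\,dr$. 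The source term then becomes $\int_0^t\langle\nabla\nabla u^0,\varphi\rangle\,dW^\eps$, which we handle by integrating by parts in time, since $u^0$ is smooth in $t$.

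\textbf{Step 3: a priori bounds and tightness.} Since $L^\eps_t$ is symmetric by {\bf H5} and nonpositive, the energy estimate for \eqref{eq:stoch_part} applies to the decomposed form (or to $R^{\eps,(1)}-W^\eps\nabla\nabla u^0$). It gives
$$
\mathbf E\sup_t\|R^{\eps,(1)}\|^2_{L^2}+\mathbf E\int_0^T\mathcal E^\eps(R^{\eps,(1)})\,dt\le C,
$$
where $\mathcal E^\eps$ is the nonlocal Dirichlet form. The bound on $\mathcal E^\eps$ yields compactness in $L^2_{loc}$ through the standard nonlocal Kolmogorov--Riesz criterion used in \cite{PiaZhi17}. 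The Schwartz decay of the source, combined with weighted $L^2$ estimates, removes the "loc". This gives tightness in $L^2(\R^d\times(0,T))$ jointly with $W^\eps$.

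\textbf{Step 4: identification of the limit.} Take a subsequence converging in law, and use Skorokhod's representation. Pass to the limit in the weak formulation with test functions $\varphi+\eps\chi_1\nabla\varphi$. The oscillating coefficient terms average to $\Theta^{\rm eff}$; this uses ergodicity in $s$ and periodicity in $\xi$, exactly as in the homogenization theorem of \cite{piat:zhiz:23a,piat:zhiz:25}. The noise term converges to $\int_0^t\langle A^{\rm eff}\nabla\nabla u^0,\varphi\rangle dW$. The limit is therefore a weak solution of \eqref{eq:limit_SPDE_without_drift}. This linear SPDE with additive noise has pathwise uniqueness, so the whole family converges.

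\textbf{Main obstacle.} I expect the hardest part to be Step 4. The difficulty is the cross terms of the form $\chi_1(\tfrac x\eps,\tfrac t{\eps^2})\,dW^\eps$: products of a fast oscillating corrector with the $\eps^{-1}$ noise do not vanish trivially. They can produce an additional averaged contribution, an Itô--Stratonovich type correction, which has to be shown to vanish or to be absorbed. I would handle these terms with the mixing bound {\bf H3}, by estimating covariances $\mathbf E[\chi_1(\cdot,s)\widetilde\Theta(s')]$ integrated over $s'$ and using the martingale/corrector decomposition of $\int\widetilde\Theta$.
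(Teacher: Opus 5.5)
There are genuine gaps. Your Step 2 ansatz $W^\eps\nabla\nabla u^0+\eps W^\eps\chi_1\nabla^{(3)}u^0$ is exactly where the paper starts, but the compactness route you then take has a step that fails as written.

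\textbf{The oscillating test function.} In Steps 2 and 4 you test against $\psi^\eps=\varphi+\eps\chi_1(\frac x\eps,\frac t{\eps^2})\nabla\varphi$. Because $L^\eps_t$ is self-adjoint, the weak formulation puts $\partial_t+L^\eps_t$ on the test function. By \eqref{eq:first_corrector}, the singular part of $(\partial_t+L^\eps_t)\psi^\eps$ is then $\eps^{-1}(\partial_s\chi_1+L_s\chi_1+g)\nabla\varphi=2\eps^{-1}\partial_s\chi_1\nabla\varphi$. This is nonzero as soon as $\chi_1$ depends on $s$. For time-dependent coefficients the oscillating test function must use the backward (adjoint) corrector, $-\partial_s\chi_1^*-L_s\chi_1^*=g$, which is $\mathcal F_{\ge s}$-measurable. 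You would then also have to check that the resulting averaged matrix reproduces $\Theta^{\rm eff}$.

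\textbf{Step 1.} This is a smaller point. $\widetilde\Theta(s)$ depends on the whole past of $\Lambda$ through $\chi_1$, so {\bf H3} does not control its correlations directly. The paper replaces $\chi_1(\cdot,s)$ by the solution started from zero at time $s/2$ and uses the exponential bound \eqref{appr_chi}.

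\textbf{Your ``main obstacle''.} You leave it open, and the mechanism you propose for it is not the right one. The cross term $\eps\chi_1\,dW^\eps\,\nabla^{(3)}u^0=\widetilde\Theta(\frac t{\eps^2})\chi_1(\frac x\eps,\frac t{\eps^2})\nabla^{(3)}u^0\,dt$ produces no correction, for a purely spatial reason. Since $\int_{\mathbb T^d}\chi_1(\xi,s)\,d\xi=0$ for every $s$, the stationary corrector $\Xi$ with $\partial_s\Xi-L_s\Xi=-\widetilde\Theta\chi_1$ exists. It shows that this contribution is $O(\eps)$ in $L^\infty(0,T;L^2)$ (Lemma~\ref{lem_3_i}). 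Covariance bounds from {\bf H3} play no role here.

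\textbf{Finishing directly, as the paper does.} Your ansatz can be pushed to the end without any compactness argument:
\begin{enumerate}
\item Add $\eps^2W^\eps\chi_2\nabla^{(4)}u^0$ to the ansatz; this removes the spatially oscillating part $h-\Theta(s)$ of the residual.
\item Handle the cross term by $\Xi$, as above.
\item The remaining term is $W^\eps(t)\widetilde\Theta(\frac t{\eps^2})\nabla^{(4)}u^0$. It cannot simply be averaged by ergodicity, since $W^\eps$ is built from $\widetilde\Theta$ itself. Use instead the exact identity $\int_0^t\kappa^\eps(s)\widetilde\Theta(\frac s{\eps^2})\,ds=\frac\eps2(\kappa^\eps(t))^2$, valid after contraction with the symmetric tensor $\nabla^{(4)}u^0$.
\end{enumerate}
This gives $R^{\eps,(1)}-W^\eps\nabla\nabla u^0\to0$ in probability in $L^\infty(0,T;L^2(\R^d))$. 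The limit is therefore the explicit process $A^{\rm eff}W_t\nabla\nabla u^0$. It solves \eqref{eq:limit_SPDE_without_drift} by It\^o's formula, because $\partial_tu^0=\Theta^{\rm eff}\nabla\nabla u^0$. No tightness, Skorokhod representation or pathwise-uniqueness argument is needed.
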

The proof of this result is split in the next lemmata.
Observe that $\widetilde \Theta$ given by \eqref{eq:effective_matrix} is stationary and satisfies for any $s$ the relation $\mathbb E \widetilde \Theta(s) = 0$.  Denote
$$\kappa^\eps(t) =\dfrac{1}{\eps} \int_0^t \widetilde \Theta \left( \dfrac{s}{\eps^2} \right) ds.$$

\begin{lemma} \label{lem:CLT}
The process $\kappa^\eps$ converges in law in $C([0,T];\mathbb R^{d^2})$ to $A^{{\rm eff}} W$ where $A^{{\rm eff}}$ is a $d^2\times d^2$-matrix and $W$ a $d^2$-dimensional standard Brownian motion.
\end{lemma}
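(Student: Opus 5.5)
By the change of variables $s=\eps^2 r$ we have
$$
\kappa^\eps(t)=\eps\int_0^{t/\eps^2}\widetilde\Theta(r)\,dr ,
$$
so Lemma~\ref{lem:CLT} is a functional central limit theorem for the stationary, centered, $\mathbb R^{d^2}$-valued process $\widetilde\Theta(r)$ under the diffusive scaling. The plan is to check the hypotheses of a classical invariance principle for stationary $\rho$-mixing sequences (Ibragimov's theorem, or its continuous-time version with the Peligrad criterion). The answer will be $A^{\rm eff}=(\Sigma)^{1/2}$, where $\Sigma$ is the symmetric positive semi-definite $d^2\times d^2$ matrix
$$
\Sigma=\int_{-\infty}^{\infty}\mathbb E\big(\widetilde\Theta(0)\otimes\widetilde\Theta(r)\big)\,dr .
$$

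\emph{Step 1: measurability and boundedness.} First we check that $\widetilde\Theta(s)$ is measurable with respect to the $\sigma$-algebra generated by $\Lambda(\cdot,\cdot,r)$, $r\le s$. This holds because $\chi_1(\cdot,s)$, the stationary solution of \eqref{eq:first_corrector}, is obtained from the forward evolution of the past. Concretely, $\chi_1(\cdot,s)=\int_{-\infty}^s P(s,r)g(\cdot,r)\,dr$, where $P$ is the evolution family of $\partial_s-L_s$ on mean-zero periodic functions, and $P$ decays exponentially by the spectral gap, uniformly thanks to {\bf H2}; this is essentially the construction of \cite[Lemma 3.9]{piat:zhiz:25}. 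Moreover $\widetilde\Theta$ is uniformly bounded, since $\|\chi_1(\cdot,s)\|_{L^2(\mathbb T^d)}\le C$ and $a$ has finite second moments. The difficulty is that $\widetilde\Theta(s)$ depends on the whole past, not only on $\Lambda(\cdot,\cdot,s)$. So mixing coefficients of $\widetilde\Theta$ are not directly bounded by $\rho$ between $\mathcal F_{\le 0}$ and $\mathcal F_{\ge r}$.

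\emph{Step 2: approximation by a local functional.} To handle this, we approximate $\widetilde\Theta(s)$ by a truncation in which the integral defining $\chi_1$ is cut to $r\in[s-M,s]$:
$$
\widetilde\Theta_M(s)=\Theta_M(s)-\mathbb E\,\Theta_M(s).
$$
Then $\widetilde\Theta_M(s)$ is $\sigma\{\Lambda(\cdot,\cdot,r):r\in[s-M,s]\}$-measurable, and the exponential decay gives $\|\widetilde\Theta-\widetilde\Theta_M\|_{L^\infty}\le Ce^{-cM}$. For $\widetilde\Theta_M$, the $\rho$-mixing coefficient at lag $r$ is bounded by $\rho(r-M)$, which is integrable by {\bf H3}. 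We will then apply the FCLT to $\widetilde\Theta_M$ for each fixed $M$. The covariances satisfy $|\mathbb E(\widetilde\Theta_M(0)\widetilde\Theta_M(r))|\le C\rho(r-M)$, so $\Sigma_M$ is well defined, and $\Sigma_M\to\Sigma$ as $M\to\infty$. For the error term we use
$$
\mathbb E\sup_{t\le T}\Big|\eps\int_0^{t/\eps^2}\big(\widetilde\Theta-\widetilde\Theta_M\big)\,dr\Big|
$$
which, at first sight, is bounded by $\eps^{-1}Te^{-cM}$. This crude bound is not uniform in $\eps$, so instead we control the second moment of the error via its covariance. That covariance is bounded by $\min(e^{-cM},\rho(r-M)+e^{-cM}\ldots)$. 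An easier route is to take $M=M(\eps)=K|\log\eps|$, so that $\eps^{-1}e^{-cM}\to0$. We then need the FCLT for a triangular array with window $M(\eps)=o(\eps^{-2})$. This follows from the Bernstein blocking argument, using blocks of length $\eps^{-1}$ separated by gaps of length $M(\eps)$.

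\emph{Step 3: finite-dimensional convergence and tightness.} Finite-dimensional convergence follows from the Bernstein blocks together with the mixing bound. The blocks are asymptotically independent via $\rho$, and the Lindeberg condition holds by boundedness. The variance is computed using {\bf H3} and dominated convergence, which gives $t\Sigma$. Tightness in $C([0,T];\mathbb R^{d^2})$ follows from a fourth-moment estimate
$$
\mathbb E|\kappa^\eps(t)-\kappa^\eps(\tau)|^4\le C\big(|t-\tau|^2+\eps^2|t-\tau|\big),
$$
obtained for bounded $\rho$-mixing processes with $\int\rho<\infty$ (Ibragimov's moment inequality), combined with Kolmogorov's criterion on the grid $|t-\tau|\ge\eps^2$ and the trivial Lipschitz bound $|\kappa^\eps(t)-\kappa^\eps(\tau)|\le C\eps^{-1}|t-\tau|$ below that scale.

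The main obstacle is Step 2: transferring the mixing of $\Lambda$ to the nonlocal-in-time functional $\widetilde\Theta$. This relies on uniform exponential stability of the evolution generated by $L_s$ on mean-zero periodic functions, which we take from \cite{piat:zhiz:25}.
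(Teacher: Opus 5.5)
There is a genuine gap in Steps 2--3, and it comes from letting the truncation window depend on $\eps$. Once $M=M(\eps)=K|\log\eps|\to\infty$, every estimate you use for $\widetilde\Theta_{M(\eps)}$ must be uniform in $M$. The only mixing information you record is $|\mathbb E\,\widetilde\Theta_M(0)\widetilde\Theta_M(r)|\le C\rho(r-M)$ for $r>M$, and only $\le C$ for $r\le M$. This is not uniform: its integral grows like $M$. So it gives no domination for the variance limit $t\Sigma$ or for $\Sigma_M\to\Sigma$. Likewise, the constant in the $\rho$-mixing fourth-moment inequality depends on the mixing coefficients of $\widetilde\Theta_{M(\eps)}$, which you control only trivially up to lag $M(\eps)$. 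Nothing you write makes your Kolmogorov bound uniform in $\eps$.

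The blocking step fails as stated. A block $[a,b]$ of $\widetilde\Theta_M$ is a functional of $\Lambda$ on $[a-M,b]$. Blocks separated by gaps of length $M$ are therefore functionals of $\Lambda$ on abutting time intervals, and $\rho$ gives no decorrelation. Even with gaps $2M(\eps)$, telescoping the characteristic functions over $N\sim\eps^{-1}$ blocks leaves an error of order $\sqrt N\,\rho(M(\eps))\approx\eps^{-1/2}\rho(K|\log\eps|)$. This need not vanish under \textbf{H3} (take $\rho(r)=(1+r)^{-2}$), so the gaps would have to be polynomial in $\eps^{-1}$. Lindeberg does not follow from boundedness either. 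For blocks of length $\eps^{-1}$ the block sums $\eps\int_{\mathrm{block}}\widetilde\Theta_M$ are only $O(1)$ in $L^\infty$, so a uniform higher-moment bound is needed there too.

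The missing idea is the one the paper uses: make the approximation window proportional to the lag, not to $\eps$. For fixed $s>0$, solve the corrector equation on $[s/2,s]$ with zero data at time $s/2$. By the exponential stability in \cite[Lemma 3.5]{piat:zhiz:25}, this produces an $\mathcal F_{\ge s/2}$-measurable $\widehat\Theta(s)$ with $|\widetilde\Theta(s)-\widehat\Theta(s)|\le Ce^{-\gamma_0 s/2}$. Since $\widetilde\Theta(0)$ is $\mathcal F_{\le 0}$-measurable, you get $|\mathbb E\,\widetilde\Theta(s)\widetilde\Theta(0)|\le C\rho(s/2)+Ce^{-\gamma_0 s/2}$ for the untruncated process, which is integrable. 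The paper then applies a stationary functional CLT from \cite{JaShi} (Theorem VIII.3.79 with Lemma VIII.3.102) directly to $\widetilde\Theta$, with no truncation in $\eps$ and no blocking.

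The same device gives $\|\mathbb E(\widetilde\Theta(s)\mid\mathcal F_{\le 0})\|_{L^2}\le C\rho(s/2)+Ce^{-\gamma_0 s/2}$. This is an integrable Gordin-type bound for the adapted process $\widetilde\Theta$, and it yields the invariance principle by martingale approximation. If you prefer to keep your truncation scheme, apply this lag-proportional approximation to $\widetilde\Theta_M$ itself. That supplies exactly the covariance bound $C\rho(r/2)+Ce^{-cr/2}$, uniform in $M$, that your Step 3 lacks.
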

\begin{proof}
We just have to prove that this process satisfies a functional central limit theorem (invariance principle), which holds if
\begin{equation}\label{eq:mixing_property}
 \int_0^\infty  \mathbb E\widetilde \Theta (s) \widetilde \Theta (0)  ds < \infty ;
 \end{equation}
see \cite[Lemma VIII.3.102]{JaShi}. Assume this property for the moment. Then we apply
 \cite[Theorem VIII.3.79]{JaShi} to deduce that $\kappa^\eps$ converges in law in $C([0,T]; \R^{d\times d})$ to a $d^2$-dimensional Brownian motion with variance coefficient equal to
$$\mathcal C= \mathbb E \int_0^\infty (\widetilde \Theta (s) \widetilde \Theta (0) + \widetilde \Theta (0) \widetilde \Theta (s) ) ds .$$
Note that $\mathcal C$ is a symmetric and positive semi-definite $d^2 \times d^2$ matrix. Thus its square root $A^{{\rm eff}}$ is well-defined and the conclusion of the lemma follows.

Now let us prove \eqref{eq:mixing_property}. In view of  \eqref{eq:effective_matrix}, we need to control the mixing property of the product of $\Lambda$ and $\chi_1$. Let us fix $s > 0$ and define $\widehat \chi_1$ as the solution of: for $r \in [s/2,s]$
$$
\partial_r \widehat \chi_1(\xi,r) - L_r \widehat \chi_1(\xi,r) = g(\xi,r),\quad \widehat \chi_1(\xi,s/2) = 0,
$$
with $g$ defined in \eqref{def_g}.
Note that $ \widehat \chi_1(\cdot,s)$ is $\mathcal F_{\geq s/2}$-measurable. And using \cite[Lemma 3.5]{piat:zhiz:25}, we deduce that there exists $C > 0$ and $\gamma_0 > 0$ s.t.
\begin{equation}\label{appr_chi}
\| \widehat \chi_1(\cdot,r) - \chi_1(\cdot,r) \|_{L^2(\mathbb T^d)} \leq C e^{-\gamma_0 (r-\frac s2)}.
\end{equation}
Thus we have
\begin{align*}
\widetilde \Theta(s) & = \Theta(s) - \Theta^{{\rm eff}} = \int_{\mathbb T^d}  h(\xi,s) d\xi  - \Theta^{{\rm eff}}  \\
 &=   \int_{\mathbb T^d} \int_{\R^d} a\left( z\right) \Lambda\left( \xi,  \xi -z,  s \right)  \left( \dfrac{1}{2} (z \otimes z) - \chi_1 \left( \xi -z, s \right)\otimes z  \right)   dzd\xi  - \Theta^{{\rm eff}} \\
& = \widehat \Theta(s) + \theta(s),
\end{align*}
with
\begin{align*}
\widehat \Theta(s)  & = \int_{\mathbb T^d} \int_{\R^d} a\left( z\right) \Lambda\left( \xi,  \xi -z,  s \right)  \left( \dfrac{1}{2} (z \otimes z) - \widehat \chi_1 \left( \xi -z, s \right)\otimes z  \right)   dzd\xi  - \Theta^{{\rm eff}} , \\
\theta(s)  & =  \int_{\mathbb T^d} \int_{\R^d} a\left( z\right) \Lambda\left( \xi,  \xi -z,  s \right)  \left( \widehat \chi_1 \left( \xi -z, s \right) - \chi_1\left( \xi -z, s \right)  \right)\otimes z  dzd\xi .
\end{align*}
Due to \eqref{appr_chi} the absolute values of the entries of matrix $\theta$ are bounded by $C e^{-\gamma_0 \frac s2}$. 
Then on the right-hand side of the relation
\begin{align*}
\mathbb E ( \widetilde \Theta(s)  \widetilde  \Theta(0))&  = \mathbb E (\widehat \Theta(s) \widetilde \Theta(0)) + \mathbb E (\theta(s) \widehat \Theta(0) )
\end{align*}
the second term is bounded by $C e^{-\gamma_0 \frac s2}$, whereas the first one is bounded by $\rho(s/2)$ (See Condition {\bf H3}).
Therefore,
$$
0\leq \mathbb E ( \widetilde \Theta(s)  \widetilde  \Theta(0)) \leq \rho(s/2) + Ce^{-\gamma_0 s}.
$$
Due to  Condition  {\bf H3} this yields \eqref{eq:mixing_property},  and the statement  of Lemma follows.
\end{proof}

\begin{lemma} \label{lem:approx_sol}
The function  $R^{\eps,(1)} -\kappa^\eps \nabla \nabla u^0$ converges in probability to zero in $L^\infty(0,T; L^2( \R^d))$ as $\eps\to 0$.
\end{lemma}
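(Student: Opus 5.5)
Set $Z^\eps := R^{\eps,(1)} - \kappa^\eps(t)\nabla\nabla u^0$. Since $\partial_t\kappa^\eps = \eps^{-1}\widetilde\Theta(t/\eps^2)$, the singular source in \eqref{eq:stoch_part} is exactly cancelled. A direct computation gives
$$
\partial_t Z^\eps - L^\eps_t Z^\eps = \kappa^\eps(t)\, \big(L^\eps_t \nabla\nabla u^0 - \nabla\nabla\partial_t u^0\big), \qquad Z^\eps(x,0)=0 .
$$
Here $\kappa^\eps(t)$ is a random $d^2$-vector that depends only on $t$ and multiplies the deterministic tensor field in parentheses. By Lemma \ref{lem:CLT}, $\sup_{t\le T}|\kappa^\eps(t)|$ is tight, i.e. bounded in probability. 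The plan is therefore to show that the parenthesized term is small in a norm that the nonlocal parabolic operator can absorb.

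The first ingredient is an energy estimate. By {\bf H5}, $L^\eps_t$ is symmetric and nonpositive on $L^2(\R^d)$, with Dirichlet form
$$
\tfrac12\eps^{-d-2}\iint a\,\Lambda\,(\phi(y)-\phi(x))^2 .
$$
Hence $\|Z^\eps(t)\|_{L^2}\le \int_0^t \|F^\eps(s)\|_{L^2}ds$, where $F^\eps$ denotes the right-hand side above. Now $L^\eps_t\nabla\nabla u^0$ does not tend to zero in $L^2$: by the expansion leading to \eqref{eq:space_deriv} it equals
$$
-\eps^{-1}g(x/\eps,t/\eps^2)\nabla^{(3)}u^0 + O(1).
$$
So the bound must be taken in a weaker, dual-type norm. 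To achieve this, I would apply the corrector trick once more. Write $Z^\eps = \eps\,\kappa^\eps(t)\chi_1(x/\eps,t/\eps^2)\nabla^{(3)}u^0 + \widetilde Z^\eps$. Using \eqref{eq:first_corrector}, the $\eps^{-1}g$ term cancels with the $\eps^{-1}\partial_s\chi_1$ term, exactly as in the derivation of \eqref{eq:first_step}. The subtracted term is $O(\eps\sup|\kappa^\eps|)$ in $L^\infty(0,T;L^2)$ because $\chi_1\in L^\infty(L^2(\mathbb T^d))$ and $u^0$ is Schwartz. Differentiating the product in time, however, creates the term $\eps\,\partial_t\kappa^\eps\,\chi_1\nabla^{(3)}u^0 = \widetilde\Theta(t/\eps^2)\chi_1(x/\eps,t/\eps^2)\nabla^{(3)}u^0$, which is of order one. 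Apart from it, the equation for $\widetilde Z^\eps$ has right-hand side $\kappa^\eps(t)$ times a bounded, rapidly oscillating field of the form $\Phi(x/\eps,t/\eps^2)\nabla^{(4)}u^0$ (and similar terms), plus $O(\eps)$ terms. The moment bound {\bf H1} handles the Taylor remainders here.

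The main obstacle is that these order-one sources are not small in $L^2$. They oscillate rapidly, but their $\xi$-averages need not vanish, so the estimate must instead exploit averaging in time. I would treat them with the Duhamel representation and the following splitting. Freeze $\kappa^\eps$ on time intervals of length $\delta$; since $\kappa^\eps$ is tight in $C([0,T])$ by Lemma \ref{lem:CLT}, its modulus of continuity is small in probability. On each interval, the source is a stationary field with mean $\mathbb E\Phi$ in the fast variables. After one more corrector step for the $\xi$-oscillation, which is solvable once the periodic mean is removed, the contribution of the averaged part produces an effective term of size $\int_0^t\kappa^\eps\, ds\cdot O(1)$. This is not small either, so $\kappa^\eps$ alone is insufficient and the key point is the remaining factor of $\eps$. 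The bookkeeping should actually run as follows: $R^{\eps,(1)}$ carries only $\eps^{-1}\widetilde\Theta$, and all of the above terms come with an extra $\eps$ because $\kappa^\eps=\eps\cdot\eps^{-2}\int\widetilde\Theta$. Concretely, I would write $\kappa^\eps(t)\Phi = \eps\,\partial_t\big(\eps^{-2}\!\int_0^t\widetilde\Theta\big)\Phi$ and integrate by parts in time inside Duhamel's formula, exactly as in \cite{KPP_2015}. The boundary terms are $O(\eps\sup|\kappa^\eps|)$. The remaining terms are controlled by the ergodic theorem, giving $\eps\int_0^t \widetilde\Theta(s/\eps^2)\Phi\,ds\to0$ almost surely, uniformly in $t$. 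Combined with the $L^2$-contraction of the evolution generated by $L^\eps_t$, this yields $\sup_{t\le T}\|Z^\eps(t)\|_{L^2}\to0$ in probability.
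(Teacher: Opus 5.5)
Your reduction is sound and parallels the paper: after subtracting $\eps\kappa^\eps\chi_1\nabla^{(3)}u^0$ from $Z^\eps$, the remaining sources are $\kappa^\eps(t)\,(h-\Theta^{\rm eff})(x/\eps,t/\eps^2)\nabla^{(4)}u^0$, the order-one term $-\widetilde\Theta(t/\eps^2)\chi_1(x/\eps,t/\eps^2)\nabla^{(3)}u^0$, and $O(\eps\sup|\kappa^\eps|)$ remainders. The gap is in how you dispose of the two order-one sources.

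Your final step rests on writing $\kappa^\eps(t)\Phi=\eps\,\partial_t\big(\eps^{-2}\int_0^t\widetilde\Theta(s/\eps^2)ds\big)\Phi$. But the right-hand side equals $\eps^{-1}\widetilde\Theta(t/\eps^2)\Phi=\partial_t\kappa^\eps\,\Phi$, not $\kappa^\eps\Phi$. There is also no hidden factor of $\eps$ in $\kappa^\eps$: $\eps^{-2}\int_0^t\widetilde\Theta(s/\eps^2)ds=\int_0^{t/\eps^2}\widetilde\Theta(r)dr$ is of order $\eps^{-1}$, so $\kappa^\eps$ is genuinely of order one (it converges to $A^{\rm eff}W$). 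Consequently the quantity $\eps\int_0^t\widetilde\Theta\Phi\,ds$ that you control never arises. The $\kappa^\eps$-weighted source is truly $O(1)$, and its negligibility must come from a cancellation you never identify. Likewise, the term $\widetilde\Theta\chi_1\nabla^{(3)}u^0$ carries no factor $\kappa^\eps$ at all, so your time integration by parts does not even apply to it.

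Two structural facts are missing.

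First, $\int_{\mathbb T^d}(h-\Theta^{\rm eff})(\xi,s)\,d\xi=\widetilde\Theta(s)$, and $\mathbb E\widetilde\Theta=0$. You claim the averaged part yields a non-small term $\int_0^t\kappa^\eps ds\cdot O(1)$; if that were true the lemma would be false. In fact its coefficient vanishes, and with this observation your freezing idea could actually be completed. The paper instead proceeds as follows:
\begin{itemize}
\item It removes the periodic fluctuation $h-\Theta(s)$ with the corrector $\eps^2\kappa^\eps\chi_2\nabla^{(4)}u^0$.
\item This leaves the source $\kappa^\eps(t)\widetilde\Theta(t/\eps^2)\nabla^{(4)}u^0$, whose time primitive is exactly $\frac\eps2(\kappa^\eps(t))^2$. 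This tends to zero in probability by tightness of $\kappa^\eps$.
\item The corresponding ansatz must be $\frac\eps2(\kappa^\eps)^2[\nabla^{(4)}u^0+\eps\chi_1\nabla^{(5)}u^0]$. The $\chi_1$ term is needed because $L^\eps_t$ applied to a smooth function is $O(\eps^{-1})$, so a plain integration by parts against the $L^2$-contractive propagator does not close.
\end{itemize}

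Second, contrary to your remark that the $\xi$-averages need not vanish, $\int_{\mathbb T^d}\widetilde\Theta(s)\chi_1(\xi,s)\,d\xi=0$, because $\chi_1$ has zero torus mean. This is precisely what makes the cell problem $\partial_s\Xi-L_s\Xi=-\widetilde\Theta\chi_1$ solvable with a stationary $\Xi$. The ansatz $\eps^2\Xi\nabla^{(3)}u^0$ then shows that this source contributes only $O(\eps)$.
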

\begin{proof}
Denote
\begin{align*}
& \widehat R^{\eps,(1)} (x,t)  =R^{\eps,(1)} (x,t) -  \kappa^\eps(t)  \nabla \nabla u^0(x,t) \\
&\qquad  -\eps\kappa^\eps(t) \chi_1\left( \dfrac{x}{\eps}, \dfrac{t}{\eps^2} \right) \nabla \nabla \nabla u^0(x,t) -\eps^2\kappa^\eps(t) \chi_2\left( \dfrac{x}{\eps}, \dfrac{t}{\eps^2} \right) \nabla \nabla \nabla \nabla u^0(x,t).
\end{align*}
Combining \eqref{eq:stoch_part}, the definition \eqref{eq:first_corrector} and \eqref{eq:second_corrector} of $\chi_1$ and $\chi_2$ and equation \eqref{eq:homo_pb} for $u^0$ , we obtain
\begin{align*}
& \partial_t  \widehat R^{\eps,(1)} (x,t)  - L^\eps_t \widehat R^{\eps,(1)} (x,t) \\
& \quad = \kappa^\eps \left(  t \right) \widetilde \Theta \left( \dfrac{t}{\eps^2}\right) \nabla \nabla\nabla\nabla u^0(x,t) - \widetilde \Theta\left( \dfrac{t}{\eps^2}\right)  \chi_1\left( \dfrac{x}{\eps}, \dfrac{t}{\eps^2} \right) \nabla \nabla \nabla u^0(x,t)  \\
& \qquad  - \eps\kappa^\eps(t)  \left[ \chi_1\left( \dfrac{x}{\eps}, \dfrac{t}{\eps^2} \right) \partial_t \nabla \nabla \nabla u^0(x,t)  + \int_{\R^d} a\left( z\right) \Lambda\left( \dfrac{x}{\eps},  \dfrac{x}{\eps} -z,  \dfrac{t}{\eps^2} \right)  \widehat P^\eps (x,z,t) dz \right] \\
& \qquad - \eps \widetilde \Theta \left( \dfrac{t}{\eps^2}\right)  \chi_2\left( \dfrac{x}{\eps}, \dfrac{t}{\eps^2} \right) \nabla \nabla \nabla \nabla u^0(x,t) - \eps^2 \kappa^\eps(t) \chi_2\left( \dfrac{x}{\eps}, \dfrac{t}{\eps^2} \right)  \partial_t \nabla \nabla \nabla \nabla u^0(x,t),
\end{align*}
where
\begin{align*}
\widehat P^\eps (x,z,t) 
& =-  \left(  \int_0^1 \nabla^{(5)}  u^0\left( x- \eps q z, t\right)   (1-q)^2  dq \right) (z \otimes z\otimes z)\\
& -  \chi_2 \left( \dfrac{x}{\eps} -z, \dfrac{t}{\eps^2} \right) \otimes z\left(  \int_0^1  \nabla^{(5)}  u^0\left( x- \eps q z, t\right) dq \right)  \\
&+   \chi_1 \left( \dfrac{x}{\eps} -z, \dfrac{t}{\eps^2} \right) \otimes z\otimes z \left( \int_0^1 \nabla^{(5)} u^0\left( x- \eps q z, t\right) (1-q) dq \right) .
\end{align*}

By linearity, $ \widehat R^{\eps,(1)} $ can be represented as the sum of solutions of  the following three problems:
\begin{enumerate}
\item
The first one reads
\begin{align*}
& \partial_t w_1 (x,t)  - L^\eps_t  w_1 (x,t)  =\eps f^\eps (x,t)\\
&\quad :=  - \eps\kappa^\eps(t)  \left[ \chi_1\left( \dfrac{x}{\eps}, \dfrac{t}{\eps^2} \right) \partial_t \nabla \nabla \nabla u^0(x,t)  + \int_{\R^d} a\left( z\right) \Lambda\left( \dfrac{x}{\eps},  \dfrac{x}{\eps} -z,  \dfrac{t}{\eps^2} \right)  \widehat P^\eps (x,z,t) dz \right] \\
& \qquad - \eps \widetilde \Theta \left( \dfrac{t}{\eps^2}\right)  \chi_2\left( \dfrac{x}{\eps}, \dfrac{t}{\eps^2} \right) \nabla \nabla \nabla \nabla u^0(x,t) - \eps^2 \kappa^\eps(t) \chi_2\left( \dfrac{x}{\eps}, \dfrac{t}{\eps^2} \right)  \partial_t \nabla \nabla \nabla \nabla u^0(x,t),\\
&\qquad \qquad w_1(x,0)=0.
\end{align*}
By the Birkhoff ergodic theorem, for any $t\in [0,T]$  a.s. $\eps\kappa^\eps(t)$ converges to zero. Considering the uniform
boundedness of $\widetilde\Theta(\cdot)$ we conclude that a.s. $|\eps\kappa^\eps(\cdot)|$ converges to zero
in $C([0,T])$. This implies that the first term on the right-hand side of the last equation a.s. tends to zero
in $L^\infty(0,T;L^2(\mathbb R^d))$ as $\eps\to0$.  The norm of two other terms in this space does not exceed
$C\eps$  with a deterministic constant $C$. Therefore,  a.s.
\begin{equation}\label{eqeqeq_a}
\|w_1\|_{L^\infty(0,T;L^2(\R^d))} \leq C \eps \| f^\eps \|_{L^\infty(0,T;L^2(\R^d))} \longrightarrow 0,\quad\hbox{as }
\eps\to0.
\end{equation}

\item The second equation takes the form
\begin{equation}\label{eq_w2_pbm}
\begin{array}{c}
\displaystyle
\partial_t w_2 (x,t)  - L^\eps_t  w_2 (x,t)  = - \widetilde \Theta\left( \dfrac{t}{\eps^2}\right)  \chi_1\left( \dfrac{x}{\eps}, \dfrac{t}{\eps^2} \right) \nabla \nabla \nabla u^0(x,t),\\
w_2(x,0)=0.
\end{array}
\end{equation}
\begin{lemma}\label{lem_3_i}
We have
\begin{equation}\label{ineq_lem_w2}
\|w_2\|_{L^\infty(0,T; H^1(\mathbb R^d))}\leqslant C\eps.
\end{equation}
\end{lemma}
\begin{proof}
Consider the equation
\begin{equation} \label{eq:aux_w2}
\partial_s \Xi (\xi, s) - (L_s \Xi) \left( \xi,  s \right) = - \widetilde \Theta(s)  \chi_1(\xi, s).
\end{equation}
By construction, the functions $\widetilde\Theta(s)$ and $\chi_1(\xi,s)$ on the right-hand side are such that
$$
\int_{\mathbb T^d}  \widetilde \Theta\left(  s \right)  \chi_1\left( \xi,  s \right) d\xi = 0\quad\hbox{for all s,}\quad
\hbox{and }\ \| \Theta  \chi_1 \|_{L^\infty(-\infty,+\infty;L^2(\mathbb T^d))}\leqslant C.
$$
Then due to \cite[Lemma 3.8]{piat:zhiz:25}, Equation \eqref{eq:aux_w2} has a unique periodic in $\xi$ and stationary in $s$ solution $\Xi \in L^\infty(-\infty,\infty;L^2(\mathbb T^d))$, and
 the function $\tilde w_2:= \eps^2 \Xi\big(\frac x\eps, \frac t{\eps^2}\big)\nabla\nabla\nabla u^0(x,t)$ satisfies the equation
\begin{equation}\label{eq_tilde_w2}
\begin{array}{c}
\displaystyle
\partial_t \tilde w_2 (x,t)  - L^\eps_t  \tilde w_2 (x,t)  = - \widetilde \Theta\Big( \dfrac{t}{\eps^2}\Big)  \chi_1\Big( \dfrac{x}{\eps}, \dfrac{t}{\eps^2}\Big) \nabla \nabla \nabla u^0(x,t)\\[3mm]
\displaystyle
+\eps^2\Xi\Big(\frac x\eps, \frac t{\eps^2}\Big)\nabla\nabla\nabla \partial_t u^0(x,t)\\[3mm]
\displaystyle
+\eps\int_{\mathbb R^d}dz\int_0^1d\tau a(z)\Lambda\Big(\frac x\eps, \frac x\eps-z,\frac t{\eps^2}\Big)
\Xi\Big(\frac x\eps-z,\frac t{\eps^2}\Big)
\nabla(\nabla\nabla\nabla) u^0(x-\tau z,t)\cdot z.
\end{array}
\end{equation}
Denote  $\tilde v_{2,\eps}(x,z,\tau,t)=\Xi\big(\frac x\eps-z,\frac t{\eps^2}\big)
\nabla(\nabla\nabla\nabla) u^0(x-\tau z,t)$. Since $u^0$  and its derivatives in time belong to the Schwartz class,
and  $\Xi \in L^\infty(-\infty,\infty;L^2(\mathbb T^d))$, we have
\begin{equation}\label{estima_w2}
\|\tilde v_{2,\eps}(\cdot,z,\tau,t)\|_{L^2(\mathbb R^d)}\leqslant C,\quad \|\tilde w_{2}(\cdot,t)\|_{L^2(\mathbb R^d)}\leqslant C\eps^2,
\end{equation}
where the constant $C$ is independent of $\eps$, $t$, $\tau$ and $z$. Therefore,
$$
\begin{array}{c}
\displaystyle
\int_{\mathbb R^d}dx\bigg(\int_{\mathbb R^d}dz\int_0^1d\tau a(z)\Lambda\Big(\frac x\eps, \frac x\eps-z,\frac t{\eps^2}\Big)
\Xi\Big(\frac x\eps-z,\frac t{\eps^2}\Big)
\nabla(\nabla\nabla\nabla) u^0(x-\tau z,t)\cdot z\bigg)^2\\[3mm]
\displaystyle
\leqslant \!(\Lambda^+)^2\!\!\int\limits_{\mathbb R^d}a(z)|z|dz\int\limits_{\mathbb R^d}a(y)|y|dy\int\limits_0^1d\tau \!
\int\limits_{\mathbb R^d}|\tilde v_{2,\eps}(x,z,\tau,t)|\, |\tilde v_{2,\eps}(x,y,\tau,t)|dx\leqslant  (\Lambda^+)^2M_1^2C^2,
\end{array}
$$
here the symbol $M_1$ stands for $\int_{\mathbb R^d}a(z)|z|dz$.
Taking the difference of Equations \eqref{eq_w2_pbm} and \eqref{eq_tilde_w2} and using \cite[Proposition 2]{piat:zhiz:23a} and the last estimate, we conclude that
$$
\|w_2-\tilde w_2\|_{L^\infty(0,T; L^2(\mathbb R^d))}\leqslant C\eps.
$$
Combining this inequality with the second inequality in \eqref{estima_w2} yields the desired estimate \eqref{ineq_lem_w2}.
\end{proof}


\item The only remaining term is the solution of the third problem
\begin{equation}\label{third_pbm}
\partial_t w_3 (x,t)  - L^\eps_t  w_3 (x,t)  = \kappa^\eps \left(  t \right) \widetilde \Theta \left( \dfrac{t}{\eps^2}\right) \nabla^{(4)} u^0(x,t), \ \ w_3(x,0)=0.
\end{equation}
Observe that
$$
\int_0^t \kappa^\eps \left( s \right) \widetilde \Theta \left( \dfrac{s}{\eps^2}\right) ds =\frac\eps2 (\kappa^{\eps}(t))^2.
$$
By Lemma \ref{lem:CLT} the process $\eps(\kappa^\eps(\cdot))$ converges to zero in probability in the space
$C([0,T])$, so does the process $\eps(\kappa^\eps(\cdot))^2$.  Letting
$$
\tilde w_3(x,t)=\frac12\eps(\kappa^\eps(t))^2 \Big[\nabla^{(4)} u^0(x,t)+\eps\chi_1\Big(
 \dfrac{x}{\eps}, \dfrac{t}{\eps^2}\Big)\nabla(\nabla^{(4)} u^0(x,t))\Big]
$$
and making straightforward computations we conclude that the function $\tilde v_{3}:=\tilde w_3-w_3$ satisfies the following problem:
\begin{align}\label{third_pbm_aux}
& \partial_t \tilde v_{3} (x,t)  - L^\eps_t  \tilde v_3 (x,t)  =\eps \kappa^\eps(t ) \widetilde \Theta \Big( \dfrac{t}{\eps^2}\Big) 
\chi_1\Big( \dfrac{x}{\eps}, \dfrac{t}{\eps^2}\Big)\nabla(\nabla^{(4)} u^0(x,t))
 \\ \nonumber & \quad
 +\frac12\eps(\kappa^\eps(t))^2\Big[\nabla^{(4)}\partial_t u^0(x,t) + \eps\chi_1\Big(
 \dfrac{x}{\eps}, \dfrac{t}{\eps^2}\Big)\nabla(\nabla^{(4)} \partial_t u^0(x,t))\Big]
  \\ \nonumber & \quad +\frac12\eps(\kappa^\eps(t))^2\!\! \int\limits_{\mathbb R^d}\! a(z)\Lambda\Big(\frac x\eps, \frac x\eps\! -\! z,\frac t{\eps^2}\Big)\Big[
 z\otimes z \!\int\limits_0^1\!\nabla\nabla (\nabla^{(4)} u^0(x-\eps \tau(1\!-\!\tau)z,t))d\tau
 \\ \nonumber & \quad +z\otimes \chi_1\Big( \dfrac{x}{\eps}, \dfrac{t}{\eps^2}\Big)
 \int\limits_0^1 \nabla\nabla (\nabla^{(4)} u^0(x-\eps \tau z,t))d\tau\Big]dz,
 \qquad \tilde v_3 (x,0)=0.
\end{align}
In the same way as in the proof of Lemma  \ref{lem_3_i} one can show that the norm of the right-hand side of equation
\eqref{third_pbm_aux} converges to zero in probability  in the space $L^\infty(0,T; L^2(\mathbb R^d))$, as $\eps\to0$.
This implies that $\|\tilde v_3\|_{L^\infty(0,T; L^2(\mathbb R^d))}\to 0$ in probability, as $\eps\to0$.

It follows from the definition of $\tilde w_3$ that $\|\tilde w_3\|_{L^\infty(0,T; L^2(\mathbb R^d))}\to 0$ in probability, as $\eps\to0$. Combining the last two relations we obtain
$$
\|w_3\|_{L^\infty(0,T; L^2(\mathbb R^d))}\to 0\quad\hbox{ in probability, as } \eps\to0.
$$
As a consequence of \eqref{eqeqeq_a}, \eqref{ineq_lem_w2} and the last estimate we have
\begin{equation}\label{est_Reps1}
\|\widehat R^{\eps,(1)} \|_{L^\infty(0,T; L^2(\mathbb R^d))}\to 0\quad\hbox{ in probability, as } \eps\to0.
\end{equation}


\end{enumerate}
From \eqref{est_Reps1}  and the definition of $\widehat R^{\eps,(1)}$  we deduce that  $ R^{\eps,(1)}-
\kappa^\eps \nabla\nabla u^0$
converges to zero in probability in  $L^\infty(0,T; L^2(\mathbb R^d))$.  This completes the proof of Lemma \ref{lem:approx_sol}.
\end{proof}

To complete the proof of Proposition \ref{prop:convergence_random_part} notice that, by Lemmata \ref{lem:CLT} and \ref{lem:approx_sol}, the random function $R^{\eps,(1)}$ converges in law in $L^\infty(0,T;L^2(\mathbb R^d))$
to the process $A^{\rm eff}\nabla\nabla u^0 W_t$, where $W_t$ is the standard $d^2$ dimensional Wiener process.
Applying It\^o's formula to the latter function we conclude that it is a solution of Equation \eqref{eq:limit_SPDE_without_drift}. According to \cite[Theorem 5.4]{dp_zab2014} Equation \eqref{eq:limit_SPDE_without_drift} is well-posed and has exactly one weak solution.

\subsection{Convergence of $R^{\eps,(2)} $}

We introduce the quantities:
\begin{equation} \label{eq:effective_drift}
\langle H \rangle (s) = \int_{\mathbb T^d} H(\xi,s) d\xi, \quad H^{{\rm eff}} = \mathbb E \langle H \rangle (s) .
\end{equation}
The next result is:
\begin{prop} \label{prop:convergence_drift_part}
The function
$R^{\eps,(2)} $ converges a.s. in $L^2(\R^d \times (0,T))$ to a solution of the following problem:
\begin{equation} \label{eq:limit_PDE_with_drift}
\partial_t u -{\rm div }( \Theta^{{\rm eff}} \nabla u )= H^{{\rm eff}} \nabla\nabla\nabla u^0, \quad u(x,0) = 0.
\end{equation}
\end{prop}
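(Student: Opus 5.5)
Denote by $u^{(2)}$ the solution of \eqref{eq:limit_PDE_with_drift}. I will treat the right-hand side of \eqref{eq:drift_part} term by term, using linearity and the a priori bound of \cite[Proposition 2]{piat:zhiz:23a}: the solution of $\partial_t w-L^\eps_t w=f$, $w(0)=w_0$, satisfies $\|w\|_{L^\infty(0,T;L^2)}\le C(\|w_0\|_{L^2}+\|f\|_{L^1(0,T;L^2)})$ with $C$ independent of $\eps$.

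\textbf{Negligible terms.} The following contributions tend to zero.
\begin{itemize}
\item[(i)] The initial datum $R^\eps(x,0)$ as it enters $R^{\eps,(2)}$. Dividing by $\eps$ leaves $-\chi_1(\frac x\eps,0)\nabla\imath+O(\eps)$, which is not small in $L^2$. To handle it, I would absorb it into the correction $-\chi_1(\frac x\eps,\frac t{\eps^2})\nabla u^0$, which already appears in the statement of Theorem~\ref{thm:main_result}. Alternatively, one can show that the solution starting from $\chi_1(\frac x\eps,0)\nabla\imath$ converges to $0$, since this datum is periodic with zero mean: it converges weakly to $0$, and the homogenized semigroup kills it. I would check which normalization is intended.
\item[(ii)] The term $\eps\chi_2\,\partial_t\nabla\nabla u^0$, which is $O(\eps)$.
\item[(iii)] The $G^\eps$ term. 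It is small because $\nabla^{(3)}u^0(x-\eps qz)-\nabla^{(3)}u^0(x)$ is bounded by $\min(C,\eps|z|\,|\nabla^{(4)}u^0|)$. Combined with the third moment of $a$ and the $L^2(\mathbb T^d)$ bounds on $\chi_1,\chi_2$, dominated convergence in $z$ gives an $L^2$ norm that is $o(1)$, exactly as in the proof of Lemma~\ref{lem_3_i}.
\end{itemize}

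\textbf{Oscillating terms with zero spatial mean.} The term $-\chi_1\partial_t\nabla u^0$ has zero mean over $\mathbb T^d$ for each $s$. I would proceed as in Lemma~\ref{lem_3_i}: solve $\partial_s\Xi-L_s\Xi=-\chi_1$ via \cite[Lemma 3.8]{piat:zhiz:25}, and set $\tilde w=\eps^2\Xi(\frac x\eps,\frac t{\eps^2})\partial_t\nabla u^0$. This shows the corresponding solution is $O(\eps)$.

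For $H$, I split $H(\xi,s)=(H(\xi,s)-\langle H\rangle(s))+(\langle H\rangle(s)-H^{\rm eff})+H^{\rm eff}$.
\begin{itemize}
\item The first piece has zero spatial mean and is handled the same way. This requires $H\in L^\infty(\mathbb R;L^2(\mathbb T^d))$, which follows from the third moment of $a$ and the bounds on $\chi_1,\chi_2$.
\item The second piece is $\tilde h(\frac t{\eps^2})\nabla^{(3)}u^0$, with $\tilde h$ stationary, zero mean and bounded. Let $w$ be its solution and set $k^\eps(t)=\int_0^t\tilde h(s/\eps^2)ds$. By the Birkhoff theorem, $k^\eps\to0$ a.s. uniformly on $[0,T]$. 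I then write $w=k^\eps(t)\nabla^{(3)}u^0+\eps k^\eps\chi_1\nabla^{(4)}u^0+r$, as in the treatment of $w_3$ in Lemma~\ref{lem:approx_sol}. The remainder $r$ solves an equation whose right-hand side is $O(|k^\eps|)+O(\eps)$, so $w\to0$ a.s.
\end{itemize}

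\textbf{Main term.} It remains to show that the solution $w$ of $\partial_t w-L^\eps_t w=H^{\rm eff}\nabla^{(3)}u^0$, $w(0)=0$, converges to $u^{(2)}$. I would set $w=u^{(2)}+\eps\chi_1\nabla u^{(2)}+\eps^2\chi_2\nabla\nabla u^{(2)}+\rho^\eps$. Repeating the computation leading to \eqref{eq:second_step} (with $u^{(2)}$ smooth and of Schwartz class, having source $H^{\rm eff}\nabla^{(3)}u^0$) gives
$$\partial_t\rho^\eps-L^\eps_t\rho^\eps=\widetilde\Theta(t/\eps^2)\nabla\nabla u^{(2)}+O(\eps)\ \text{terms}.$$
The term $\widetilde\Theta(t/\eps^2)\nabla\nabla u^{(2)}$ is again a purely time-oscillating zero-mean term. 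By the argument above, via $\int_0^t\widetilde\Theta(s/\eps^2)ds\to0$ a.s., its contribution vanishes a.s. Hence $w\to u^{(2)}$ a.s. in $L^\infty(0,T;L^2)$, and therefore in $L^2(\mathbb R^d\times(0,T))$.

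\textbf{Main obstacle.} I expect the difficulty to be twofold. The first is the a.s. (rather than in-probability) control of the time-oscillating terms, which uses only the ergodic theorem and so should be fine. The second, more delicate, is the initial layer term $\chi_1(\frac x\eps,0)\nabla\imath$ in (i), together with the regularity of $\chi_2$ needed to make $H$ and $G^\eps$ bounded in $L^2$.
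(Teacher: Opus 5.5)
Your decomposition of the right-hand side of \eqref{eq:drift_part} matches the paper's, apart from one step. You use correctors for the spatially mean-zero pieces $H-\langle H\rangle$ and $\chi_1\,\partial_t\nabla u^0$ (the paper's $\chi_3,\chi_4$), the Birkhoff-plus-ansatz argument for $\langle H\rangle-H^{\rm eff}$, and a cutoff in $z$ for $G^\eps$. You also redo the two-scale expansion for the source $H^{\rm eff}\nabla\nabla\nabla u^0$, where the paper cites \cite[Theorem 2.1]{piat:zhiz:25}; that is fine.

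The gap is step (i), the contribution of the initial datum $-\chi_1(\frac x\eps,0)\nabla\imath$. You give no valid argument for it. Option (a) is not available. The datum is forced by $u^\eps(\cdot,0)=\imath$ and \eqref{eq:expansion}, and the process in Theorem~\ref{thm:main_result} equals exactly $\eps\chi_2\nabla\nabla u^0+R^{\eps,(1)}+R^{\eps,(2)}$. So there is nothing to absorb it into and no normalization to choose. Option (b) does not give what is needed. Weak convergence of the datum, even with a homogenization theorem for weakly convergent data (which you would also have to prove), yields only weak convergence of the solution. The Proposition asserts norm convergence in $L^2(\R^d\times(0,T))$, and Theorem~\ref{thm:main_result} needs this to add $R^{\eps,(2)}$ to $R^{\eps,(1)}$. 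Note also that this contribution cannot vanish in $L^\infty(0,T;L^2(\R^d))$, since it equals the datum at $t=0$. Your concluding $L^\infty(0,T;L^2)$ convergence therefore holds only for the other pieces. What must be shown is that the datum is dissipated within a time layer of width $O(\eps^2)$.

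The missing ingredient is exponential decay of mean-zero solutions of the cell problem. Let $\mathcal I$ solve $\partial_s\mathcal I-L_s\mathcal I=0$ on $\mathbb T^d\times(0,\infty)$ with $\mathcal I(\cdot,0)=-\chi_1(\cdot,0)$. Since $\int_{\mathbb T^d}\chi_1(\xi,0)\,d\xi=0$, \cite[Lemma 3.5]{piat:zhiz:25} gives $\|\mathcal I(\cdot,s)\|_{L^2(\mathbb T^d)}\le Ce^{-\nu s}$. Let $r$ be the solution with zero source and this datum, and set $\theta=r-\mathcal I(\frac x\eps,\frac t{\eps^2})\nabla u^0$. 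Then $\theta$ has zero initial value. Its source comes from $\mathcal I\,\partial_t\nabla u^0$ and from the first-order Taylor term $\eps^{-1}\int a\Lambda\,\mathcal I(\frac x\eps-z,\frac t{\eps^2})\,z\int_0^1\nabla\nabla u^0(x-\eps\tau z,t)\,d\tau\,dz$. This source has $L^2(\R^d)$-norm at most $C\eps^{-1}e^{-\nu t/\eps^2}$, so its $L^1(0,T)$-norm is $O(\eps)$ and $\|\theta\|_{L^\infty(0,T;L^2)}=O(\eps)$. Meanwhile $\|\mathcal I(\frac x\eps,\frac t{\eps^2})\nabla u^0\|_{L^2(\R^d\times(0,T))}^2\le C\int_0^T e^{-2\nu t/\eps^2}\,dt=O(\eps^2)$. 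This quantitative decay on the fast time scale is what your remark that ``the homogenized semigroup kills it'' lacks; with it, the rest of your argument goes through.
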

In the rest of this section we provide a proof of this statement. We introduce one more corrector $\chi_3$ as a stationary solution of the equation
$$
(\partial_s \chi_3 + L_s \chi_3)(\xi,s) = H(\xi,s) - \langle H \rangle (s).
$$
Since the right-hand side in this equation  has zero mean, the existence of a stationary solution  $\chi_3$ and its uniqueness
up to an additive constant are granted by \cite[Lemma 3.8]{piat:zhiz:25}. 
Similarly, since
$$
\int_{\mathbb T^d} \chi_1(\xi,s) d\xi =0,
$$
the equation $(\partial_s \chi_4 + L_s \chi_4)(\xi,s) = \chi_1(\xi,s)$ has a stationary solution, we denote it by $ \chi_4$.
Letting
$$
 \widehat R^{\eps,(2)}(x,t)= R^{\eps,(2)}(x,t)-\eps^2 \left[ \chi_3 \left( \dfrac{x}{\eps},\dfrac{t}{\eps^2} \right) \nabla\nabla \nabla u^0(x,t)  + \chi_4 \left( \dfrac{x}{\eps},\dfrac{t}{\eps^2} \right) \partial_t \nabla \nabla u^0(x,t) \right]
$$
and considering the definition of $\chi_3$ and $\chi_4$, from  \eqref{eq:drift_part} we deduce the equation
\begin{align} \label{eq:drift_part_2}
&  \partial_t  \widehat R^{\eps,(2)}(x,t)  - L^\eps_t \widehat R^{\eps,(2)}(x,t)  =  \langle H \rangle \left(\dfrac{t}{\eps^2}\right)  \nabla \nabla \nabla u^0(x , t)   \\ \nonumber
& \quad  - \eps \chi_2\left( \dfrac{x}{\eps}, \dfrac{t}{\eps^2} \right) \partial_t \nabla\nabla  u^0(x,t) \\ \nonumber
& \quad - \eps^2 \left[ \chi_3 \left( \dfrac{x}{\eps},\dfrac{t}{\eps^2} \right) \partial_t \nabla\nabla \nabla u^0(x,t)  + \chi_4 \left( \dfrac{x}{\eps},\dfrac{t}{\eps^2} \right) \partial^2_t \nabla \nabla u^0(x,t) \right]  \\ \nonumber
& \quad + \int_{\R^d} a\left( z\right) \Lambda\left( \dfrac{x}{\eps},  \dfrac{x}{\eps} -z,  \dfrac{t}{\eps^2} \right) \left[ G^\eps(x,z,t) + \eps \widehat G^\eps(x,z,t) z \right]dz,
\end{align}
where
\begin{align*}
& \widehat G^\eps(x,z,t)  = - \chi_3 \left( \dfrac{x}{\eps} -z, \dfrac{t}{\eps^2} \right) \left( \int_0^1 \left( \nabla \nabla\nabla u^0 \left( x- \eps q z, t\right) -\nabla \nabla \nabla u^0(x , t) \right)  dq  \right)  \\\nonumber
&\quad  +\chi_4 \left( \dfrac{x}{\eps} -z, \dfrac{t}{\eps^2} \right) \left( \int_0^1\left(  \partial_t \nabla\nabla u^0\left( x- \eps q z, t\right)-\partial_t \nabla \nabla u^0(x , t)\right)  dq \right)  .
\end{align*}
We split $\widehat R^{\eps,(2)}$ into three parts: $\widehat R^{\eps,(2)} = r^{\eps,(1)} + r^{\eps,(2)} + r^{\eps,(3)}$, where
\begin{itemize}
\item $r^{\eps,(1)}$ satisfies the equation
\begin{equation} \label{eq:drift_part_3}
\partial_t  r^{\eps,(1)} (x,t)  - L^\eps_t r^{\eps,(1)} (x,t)  =   \langle H \rangle \left(\dfrac{t}{\eps^2}\right)  \nabla \nabla \nabla u^0(x , t)
\end{equation}
with initial condition zero;
\item $r^{\eps,(2)} $ satisfies the equation
$$
\partial_t  r^{\eps,(2)}(x,t)  - L^\eps_t r^{\eps,(2)}(x,t)  = 0,
$$
with initial condition: $r^{\eps,(2)}(x,0) = -  \chi_1\left( \dfrac{x}{\eps}, 0 \right) \nabla u^0(x,0) $;
\item $r^{\eps,(3)} $ satisfies the equation
\begin{align} \label{eq:neg_part}
&  \partial_t  r^{\eps,(3)}(x,t)  - L^\eps_t r^{\eps,(3)}(x,t)  = - \eps \chi_2\left( \dfrac{x}{\eps}, \dfrac{t}{\eps^2} \right) \partial_t \nabla\nabla  u^0(x,t) \\ \nonumber
& \quad - \eps^2 \left[ \chi_3 \left( \dfrac{x}{\eps},\dfrac{t}{\eps^2} \right) \partial_t \nabla\nabla \nabla u^0(x,t)  + \chi_4 \left( \dfrac{x}{\eps},\dfrac{t}{\eps^2} \right) \partial^2_t \nabla \nabla u^0(x,t) \right]  \\ \nonumber
& \quad +  \int_{\R^d} a\left( z\right) \Lambda\left( \dfrac{x}{\eps},  \dfrac{x}{\eps} -z,  \dfrac{t}{\eps^2} \right) \left[ G^\eps(x,z,t) + \eps \widehat G^\eps(x,z,t) z \right]dz
\end{align}
with initial condition
\begin{align}\label{init_r3}
r^{\eps,(3)}(x,0) & = - \eps \chi_2\left( \dfrac{x}{\eps}, 0 \right) \nabla\nabla  u^0(x,0) \\
\nonumber
& -\eps^2 \left[ \chi_3 \left( \dfrac{x}{\eps},0 \right) \nabla\nabla \nabla u^0(x,0)  + \chi_4 \left( \dfrac{x}{\eps}, 0 \right) \partial_t \nabla \nabla u^0(x,0) \right] .
\end{align}
\end{itemize}

\begin{lemma}
$r^{\eps,(2)}$ converges to zero a.s. in $L^\infty(\R^d\times[0,T])$.
\end{lemma}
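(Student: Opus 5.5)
The plan is to build an explicit two-scale approximation of $r^{\eps,(2)}$ and then control the error with the maximum principle for $\partial_t-L^\eps_t$. The operator $L^\eps_t$ is the generator of a pure jump Markov process, since $a\geq0$ and $\Lambda>0$. Hence the solution operator of $\partial_t w-L^\eps_t w=f$ is a contraction in $L^\infty(\R^d)$, and
\[
\|w(\cdot,t)\|_{L^\infty}\leq \|w(\cdot,0)\|_{L^\infty}+\int_0^t\|f(\cdot,\tau)\|_{L^\infty}d\tau .
\]
This estimate, rather than the $L^2$ estimate of \cite[Proposition 2]{piat:zhiz:23a}, is the tool that fits the norm $L^\infty(\R^d\times[0,T])$ in the statement.

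\textbf{Step 1 (fast decaying corrector).} Let $\zeta(\xi,s)$, $s\geq0$, be the periodic solution of
\[
\partial_s\zeta-L_s\zeta=0,\qquad \zeta(\xi,0)=-\chi_1(\xi,0).
\]
By the symmetry condition {\bf H5}, $\int_{\mathbb T^d}L_s\phi\,d\xi=0$, so the zero mean of $\chi_1(\cdot,0)$ is preserved in time. The exponential stabilization result \cite[Lemma 3.5]{piat:zhiz:25}, already used in \eqref{appr_chi}, then gives
\[
\|\zeta(\cdot,s)\|_{L^2(\mathbb T^d)}\leq Ce^{-\gamma_0 s}.
\]
I will upgrade this to an $L^\infty(\mathbb T^d)$ decay bound. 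Write $L_s=K_s-m_s$, where $m_s(\xi)=\int a(z)\Lambda(\xi,\xi-z,s)dz\geq\Lambda^-$. Duhamel's formula gives $\zeta(s)=e^{-\int m}\zeta(0)+\int e^{-\int m}K\zeta$. The operator $K_s$ maps $L^2(\mathbb T^d)$ to $L^\infty$ only after periodization, so I expect to need one or two iterations of this formula, combined with the $L^2$ decay, to obtain
\[
\|\zeta(\cdot,s)\|_{L^\infty}\leq Ce^{-\gamma s}\big(1+\|\chi_1(\cdot,0)\|_{L^\infty}\big)
\]
for some $\gamma>0$. This requires $\chi_1(\cdot,0)\in L^\infty$, which I will first derive from equation \eqref{eq:first_corrector} by the same Duhamel argument, using that $g$ is bounded.

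\textbf{Step 2 (error term).} Set $\tilde r(x,t)=\zeta\big(\frac x\eps,\frac t{\eps^2}\big)\nabla u^0(x,t)$. This function has the same initial value as $r^{\eps,(2)}$. Using the expansion of $L^\eps_t$ acting on products computed before \eqref{eq:space_deriv}, $\tilde r$ satisfies
\[
\partial_t\tilde r-L^\eps_t\tilde r=\zeta\,\partial_t\nabla u^0+\frac1\eps\int a(z)\Lambda\,\zeta\big(\tfrac x\eps-z,\tfrac t{\eps^2}\big)\otimes z\int_0^1\nabla\nabla u^0(x-\eps qz,t)\,dq\,dz .
\]
Its sup norm is at most $C\eps^{-1}e^{-\gamma t/\eps^2}$, by the first moment of $a$ and the boundedness of $\nabla\nabla u^0$. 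Integrating in time,
\[
\int_0^T\|\cdot\|_{L^\infty}\,dt\leq C\eps^{-1}\eps^2=C\eps .
\]
The maximum principle therefore gives
\[
\|r^{\eps,(2)}-\tilde r\|_{L^\infty(\R^d\times[0,T])}\leq C\eps ,
\]
and the constant $C$ is deterministic.

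\textbf{Step 3 and main obstacle.} It remains to bound $\tilde r$ itself. We have $\|\tilde r(\cdot,t)\|_{L^\infty}\leq Ce^{-\gamma t/\eps^2}$. This is small uniformly for $t\geq\eps$, but at $t=0$ it equals $\|\chi_1(\cdot,0)\nabla\imath\|_{L^\infty}$. This initial layer is the main obstacle. It is also where the a.s. character of the statement enters, since $\chi_1(\cdot,0)$ depends on $\omega$. To get convergence in $L^\infty(\R^d\times[0,T])$ as worded, one needs $\chi_1(\cdot,0)\nabla u^0(\cdot,0)$ to be negligible, for instance when $\chi_1(\xi,0)$ vanishes for the given realization. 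Otherwise the assertion only holds after removing a layer $t\in[0,\eps^{2-\delta}]$. I would first attempt the direct route and carefully check whether the initial datum of $r^{\eps,(2)}$, as split off in $\widehat R^{\eps,(2)}$, really carries the factor $\chi_1$ at order one. If it carries an extra factor $\eps$ (consistent with $R^\eps(x,0)=O(\eps)$ and the normalization $R^\eps=\eps[\cdots]$), then Steps 1 and 2 give directly $\|r^{\eps,(2)}\|_{L^\infty}\leq C\eps$ a.s., which proves the lemma.
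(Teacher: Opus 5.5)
Your Step 3 diagnosis is correct, and the rescue you hope for at the end is not available, so the proposal does not prove the lemma. No argument can, because the lemma is false as worded. The initial datum of $r^{\eps,(2)}$ is $-\chi_1(\frac x\eps,0)\nabla\imath(x)$, with no factor $\eps$. Indeed, $R^{\eps,(1)}(\cdot,0)=0$ and $R^\eps=\eps[R^{\eps,(1)}+R^{\eps,(2)}]$ force $R^{\eps,(2)}(\cdot,0)=\eps^{-1}R^\eps(\cdot,0)$. The words ``with initial condition $R^\eps(x,0)$'' after \eqref{eq:drift_part} are a slip: the splitting into $r^{\eps,(2)},r^{\eps,(3)}$ with \eqref{init_r3}, and the non-symmetric section, both use $\eps^{-1}R^\eps(x,0)$. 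Hence, whenever $\chi_1(\cdot,0)\cdot\nabla\imath\not\equiv0$, $r^{\eps,(2)}(\cdot,t)$ stays of order one for $t$ near $0$. This holds in $L^\infty(\R^d)$, and also in $L^2(\R^d)$: at $t=0$ the $L^2$ norm tends to $\big(\int_{\R^d}\int_{\mathbb T^d}|\chi_1(\xi,0)\cdot\nabla\imath(x)|^2\,d\xi\,dx\big)^{1/2}$. Your Steps 1--2 are not at fault; the statement is.

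The paper's proof has the same defect, hidden in its last sentence. It uses your corrector ($\mathcal I=\zeta$), but bounds $\theta^{2,\eps}$ by an $L^2$ energy estimate instead of your $L^\infty$ maximum principle, obtaining $\|\theta^{2,\eps}\|_{L^\infty(0,T;L^2)}\le C\eps^{1/2}$. It then claims $\|r^{\eps,(2)}\|_{L^\infty(0,T;L^2)}\le C\eps^{1/2}$ via \eqref{exp_auxx}. But \eqref{exp_auxx} only gives $\|\mathcal I(\frac x\eps,\frac t{\eps^2})\nabla u^0(x,t)\|_{L^2(\R^d)}\le Ce^{-\nu t/\eps^2}$, which is of order one near $t=0$ --- exactly your initial layer.

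What both arguments really give is $C\eps^{1/2}+Ce^{-\nu t/\eps^2}$ in $L^2$ (paper) or $C\eps+Ce^{-\gamma t/\eps^2}$ in $L^\infty$ (you). This yields convergence on $[\delta,T]$ for every $\delta>0$, and in $L^p$ in time for $p<\infty$. In particular $r^{\eps,(2)}\to0$ in $L^2(\R^d\times(0,T))$, which is all that Proposition \ref{prop:convergence_drift_part} and Theorem \ref{thm:main_result} use. The theorem's process itself equals $-\chi_1(\frac x\eps,0)\nabla\imath(x)$ at $t=0$, so its $L^2(\R^d\times(0,T))$ topology is consistent with this layer.

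For that corrected statement the $L^2$ route is the right one, since a sup-norm bound gives no control of the $L^2(\R^d)$ norm on the whole space. Your Step 2 transfers verbatim, using that $\partial_t-L^\eps_t$ is an $L^2$ contraction under {\bf H5}. It then needs only the $L^2(\mathbb T^d)$ decay of $\zeta$, and even improves the paper's $\eps^{1/2}$ to $\eps$.

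This also matters because the $L^\infty$ upgrade in your Step 1 fails as sketched. Iterating Duhamel does not map $L^2(\mathbb T^d)$ into $L^\infty$, since convolution powers of the periodization of a merely integrable $a$ need not be bounded. If you want the sup-norm version, split $a$ into two pieces: a bounded compactly supported part, which does map $L^2(\mathbb T^d)$ into $L^\infty$, and a remainder of mass $\delta$ with $\Lambda^+\delta<\Lambda^-$. Then close a Gr\"onwall argument in the pointwise Duhamel formula, using $m_s\ge\Lambda^-$. The same argument gives $\chi_1\in L^\infty$.
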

\begin{proof}
Define $\mathcal I=\mathcal I(\xi,s)$ as a solution of the following Cauchy problem:
$$
\partial_s  \mathcal I(\xi ,s) - L_s \mathcal I (\xi ,s) = 0\quad\hbox{in }\mathbb T^d\times(0,+\infty),\qquad
\mathcal I(\xi ,0)= -\chi_1(\xi,0).
$$
Since $\int_{\mathbb T^d} \mathcal I (\xi ,0)  d\xi =-\int_{\mathbb T^d} \chi_1 (\xi ,0)   =0$, then, according to \cite[Lemma 3.5]{piat:zhiz:25}, there exists a constant $\nu > 0$ such that
\begin{equation}\label{exp_auxx}
\| \mathcal I (\cdot, s) \|_{L^2(\mathbb T^d)} \leq C e^{-\nu s}.
\end{equation}
It is straightforward to check that the difference $\theta^{2,\eps}(x,t):=r^{\eps,(2)}(x,t)- \mathcal I \big(\frac x\eps, \frac t{\eps^2}\big)\nabla u^0(x,t)$ satisfies the equation
\begin{align}  \nonumber
& \partial_t  \theta^{2,\eps}(x,t)  - L^\eps_t\theta^{2,\eps}(x,t) \\ \label{2_auxili}
& \quad
 =\int_{\mathbb R^d}\eps^{-1}a(z)
\Lambda\left( \dfrac{x}{\eps},  \dfrac{x}{\eps} -z,  \dfrac{t}{\eps^2} \right)\mathcal{I}\Big(\frac x\eps-z,\frac t{\eps^2}\Big) z\int_0^1\nabla\nabla u^0(x-\eps \tau z,t)\,d\tau dz.
\end{align}
 Taking into account Estimate \eqref{exp_auxx}, Condition {\bf H2} and the fact that $u^0(\cdot,t)$ is a Schwartz class function, we conclude that the function
 $$
 \Phi_z^{2,\eps}(x,t):=\Lambda\left( \dfrac{x}{\eps},  \dfrac{x}{\eps} -z,  \dfrac{t}{\eps^2} \right)\mathcal{I}\Big(\frac x\eps-z,\frac t{\eps^2}\Big) \int_0^1\nabla\nabla u^0(x-\eps \tau z,t)\,d\tau
 $$
 satisfies the upper bound
 $$
 \|\Phi_z^{2,\eps}(\cdot,t)\|_{L^2(\mathbb R^d)}\leqslant C e^{-\frac {\nu t}{\eps^2}},
 $$
 where the constant $C$ does not depend on $z$. Therefore, the $L^2$-norm of the right-hand side in
 \eqref{2_auxili} admits the estimate
 $$
 \int_{\mathbb R^d}\Big(\int_{\mathbb R^d}\eps^{-1}a(z) z \,\Phi_z^{2,\eps}(x,t)\,dz\Big)^2\,dx
 =\eps^{-2}\int_{\mathbb R^d}\int_{\mathbb R^d}a(z)z\,a(q)q\int_{\mathbb R^d} \Phi_z^{2,\eps}(x,t)\Phi_q^{2,\eps}(x,t)\,dx
 $$
 $$
 \leqslant \eps^{-2}\int_{\mathbb R^d}\int_{\mathbb R^d}a(z)z\,a(q)q \|\Phi_z^{2,\eps}(\cdot,t)\|_{L^2(\mathbb R^d)}\|\Phi_q^{2,\eps}(\cdot,t)\|_{L^2(\mathbb R^d)}\,dzdq\leqslant C \eps^{-2} e^{-\frac {2\nu t}{\eps^2}}
 $$
 with a constant C that does not depend on $\eps$. According to \cite[Proposition 4.1]{piat:zhiz:25} this yields
 $$
 \|\theta^{2,\eps}\|_{L^\infty(0,T;L^2(\mathbb R^d))}\leqslant C.
 $$
 Multiplying Equation \eqref{2_auxili} by $\theta^{2,\eps}$ and integrating the resulting relation over
 $\mathbb R^d\times[0,T]$ and considering the last two estimates we obtain
\begin{align*}
 \int_{\mathbb R^d} (\theta^{2,\eps}(x,t))^2\,dx & \leqslant C\int_0^t\int_{\mathbb R^d}\theta^{2,\eps}(x,s)
 \int_{\mathbb R^d}\eps^{-1}a(z) z \,\Phi_z^{2,\eps}(x,t)\,dz dxds \\
 &  \leqslant C\int_0^t\|\theta^{2,\eps}(\cdot,s)\|_{L^2(\mathbb R^d)}
 \Big[\int_{\mathbb R^d}\Big(\int_{\mathbb R^d}\eps^{-1}a(z) z \,\Phi_z^{2,\eps}(x,s)\,dz\Big)^2 dx\Big]^\frac12ds \\
 & \leqslant C \int_0^t C \eps^{-1}e^{-\frac{\nu s}{\eps^2}}\leqslant C\eps.
 \end{align*}
 Combining this inequality with \eqref{exp_auxx} yields the upper bound $\|r^{\eps,(2)}\|_{L^\infty(0,T;L^2(\mathbb R^d))}\leqslant C\eps^{\frac12}$ and thus the desired convergence.
\end{proof}

\begin{lemma}
$r^{\eps,(3)}$ converges to zero a.s. in $L^\infty(0,T;L^2(\R^d))$.
\end{lemma}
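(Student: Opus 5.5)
The proof splits into a bound on the initial datum and a bound on the forcing, each of which should be $O(\eps)$ (or $o(1)$ a.s.). We then apply the standard $L^2$ energy (a priori) estimate for $\partial_t w - L^\eps_t w = f$, $w(\cdot,0)=w_0$. Since $L^\eps_t$ is symmetric and nonpositive under {\bf H5}, this estimate (\cite[Proposition 2]{piat:zhiz:23a}, or \cite[Proposition 4.1]{piat:zhiz:25}) gives
$$
\|w\|_{L^\infty(0,T;L^2(\R^d))}\le \|w_0\|_{L^2(\R^d)}+\int_0^T\|f(\cdot,t)\|_{L^2(\R^d)}\,dt .
$$

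\emph{Initial datum.} By \eqref{init_r3}, $r^{\eps,(3)}(\cdot,0)$ is $\eps$ times $\chi_2(x/\eps,0)\nabla\nabla u^0(x,0)$ plus $\eps^2$ times analogous terms with $\chi_3,\chi_4$. The correctors are periodic and belong to $L^\infty(\R;L^2(\mathbb T^d))$, and $u^0(\cdot,0)=\imath$ has Schwartz derivatives. The product of a periodic $L^2(\mathbb T^d)$ function of $x/\eps$ with a Schwartz function has $L^2(\R^d)$ norm bounded uniformly in $\eps$. To see this, split $\R^d$ into $\eps$-cells and bound the Schwartz factor on each cell by its supremum; the resulting Riemann sums are bounded. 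Hence $\|r^{\eps,(3)}(\cdot,0)\|_{L^2}\le C\eps$.

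\emph{Explicit forcing terms.} The terms $\eps\chi_2\,\partial_t\nabla\nabla u^0$ and $\eps^2[\chi_3\partial_t\nabla^{(3)}u^0+\chi_4\partial_t^2\nabla\nabla u^0]$ are handled in the same way, using that all time derivatives of $u^0$ are Schwartz in $x$ uniformly on $[0,T]$. This gives a bound $C\eps$ in $L^\infty(0,T;L^2)$. The nonlocal term containing $\eps\widehat G^\eps z$ is treated exactly as in the proof of Lemma~\ref{lem_3_i}. Expanding the square, applying Cauchy--Schwarz in $x$, and using that $\|\chi_j(\cdot/\eps-z,t/\eps^2)\,\nabla^{(k)}u^0(\cdot-\eps qz,t)\|_{L^2}\le C$ uniformly in $z,q$, we obtain a bound $C\eps\,(\Lambda^+)\int a(z)|z|\,dz$.

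\emph{The main obstacle} is the term $\int a(z)\Lambda(\cdot)G^\eps(x,z,t)\,dz$. It carries no explicit factor $\eps$, and only three moments of $a$ are available. Each summand of $G^\eps$ has the form $\Psi(x/\eps-z)\,z^{\otimes k}\,[\nabla^{(3)}u^0(x-\eps qz,t)-\nabla^{(3)}u^0(x,t)]$ with $k\le 3$ and $\Psi\in\{1,\chi_1,\chi_2\}$. For fixed $z$, translation continuity gives
$$
\|\nabla^{(3)}u^0(\cdot-\eps qz,t)-\nabla^{(3)}u^0(\cdot,t)\|\le \min\{C,\;C\eps|z|\}.
$$
Because $\Psi$ is a rough periodic factor, this must be done cellwise: bound the Schwartz difference on each $\eps$-cell by its oscillation, which is controlled by $\min(1,\eps|z|)$ times a Schwartz envelope. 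This yields an $L^2$ norm of order $|z|^k\min(1,\eps|z|)$ uniformly in $t$. The resulting bound is $C\int a(z)|z|^3\min(1,\eps|z|)\,dz$, which tends to $0$ by dominated convergence since $|z|^3a\in L^1$ by {\bf H1}. This gives only $o(1)$ rather than $O(\eps)$, which suffices. All bounds are deterministic: the correctors are bounded in $L^\infty(\R;L^2(\mathbb T^d))$ for every $\omega$. Combining them through the a priori estimate therefore yields a.s. convergence of $r^{\eps,(3)}$ to zero in $L^\infty(0,T;L^2(\R^d))$.
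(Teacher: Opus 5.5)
Your proposal is correct and follows the paper's route: the a priori estimate for $\partial_t-L^\eps_t$ (\cite[Proposition 4.1]{piat:zhiz:25}), $O(\eps)$ bounds on the initial datum and on the explicit forcing terms, and control of $\int a\Lambda G^\eps\,dz$ through the third moment of $a$ combined with a Taylor/translation estimate on the differences of $\nabla\nabla\nabla u^0$. The differences are only in execution. The paper splits the $z$-integral at $|z|=\eps^{-1/2}$, which gives an explicit rate, and uses pointwise bounds on the correctors. You instead use the factor $\min(1,\eps|z|)$ with dominated convergence and a cellwise argument that needs only $L^2(\mathbb T^d)$ bounds on the correctors. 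One small slip: your dominating weight should be $a(z)(1+|z|^3)$ rather than $a(z)|z|^3$, because $G^\eps$ also has terms linear and quadratic in $z$; this does not affect the argument.
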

\begin{proof}
Our goal is to estimate the $L^2$ norm of the  term
$$
 \int_{\R^d} a\left( z\right) \Lambda\left( \dfrac{x}{\eps},  \dfrac{x}{\eps} -z,  \dfrac{t}{\eps^2} \right) G^\eps(x,z,t) dz
$$
on the right-hand side of \eqref{eq:neg_part}. The smallness of the other terms is clear.
\\
From the boundedness of $\Lambda$, $\chi_1$, $\chi_2$ and $\nabla\nabla\nabla u^0$, we obtain that
$| G^\eps(x,z,t) | \leq C(1+ |z|^3).$
Representing $\mathbb R^d$ as $\mathbb R^d=\{z\,:\,\eps^{\frac12}|z|<1\}\cup \{z\,:\,\eps^{\frac12}|z|\geqslant 1\}$ we have
\begin{align*}
&  \int_{\mathbb R^d}\Big(\int_{\sqrt{\eps} |z| \geq 1} a\left( z\right) \Lambda\left( \dfrac{x}{\eps},  \dfrac{x}{\eps} -z,  \dfrac{t}{\eps^2} \right) |G^\eps(x,z,t) |dz\Big)^2\,dx \\
& \quad  \leqslant  C \Big(\int_{\sqrt{\eps} |z| \geq 1} a\left( z\right) (1+|z|^3) dz\Big)^2 \,
   \|\nabla\nabla\nabla u^0(\cdot,t)\|^2_{L^2(\mathbb R^d)} \\
   & \quad \leqslant C \Big(\int_{\sqrt{\eps} |z| \geq 1} a\left( z\right) (1+|z|^3) dz\Big)^2 .
 \end{align*}
Since the function $a(z)(1+|z|^3)$ is integrable, the right-hand side here tends to zero as $\eps\to0$.
If $z\in\{z\,:\,\sqrt{\eps} |z| < 1\}$, then for any $q \in [0,1]$ the following estimate holds:
$$
 \left| \nabla \nabla\nabla u^0 \left( x- \eps  q z, t\right) -\nabla \nabla \nabla u^0(x , t) \right| \leqslant
 \eps^{\frac12}\|\nabla\nabla\nabla\nabla u^0(\cdot,t)\|_{C(B_1(x))},
 $$
 where $B_1(x)=\{y\in\mathbb R^d\,:\,|y-x|\leqslant 1\}$.
Hence
\begin{align*}
& \int_{\mathbb R^d}\Big(\int_{\sqrt{\eps} |z| < 1} a\left( z\right) \Lambda\left( \dfrac{x}{\eps},  \dfrac{x}{\eps} -z,  \dfrac{t}{\eps^2} \right) |G^\eps(x,z,t) |dz\Big)^2\,dx
\\
& \quad \leqslant C\eps \Big(\int_{\mathbb R^d} a\left( z\right) (1+|z|^3) dz\Big)^2\,\int_{\mathbb R^d}
\|\nabla\nabla\nabla\nabla u^0(\cdot,t)\|^2_{C(B_1(x))}\,dx\leqslant  C\eps.
\end{align*}
Since the initial condition in \eqref{init_r3} satisfies the estimate $\|r^{\eps,(3)}(\cdot,0)\|_{L^2(\mathbb R^d)}
\leqslant c\eps$, by \cite[Proposition 4.1]{piat:zhiz:25} 
we conclude that there exists a constant $C$ such that
$$
\|r^{\eps,(3)}\|^2_{L^\infty(0,T;L^2(\mathbb R^d))}\leqslant
 C\Big(\int_{\sqrt{\eps} |z| \geq 1} a\left( z\right) (1+|z|^3) dz\Big)^2+ C\eps,
$$
which implies the required convergence.
\end{proof}

\begin{lemma} \label{lem:conv_drift_part}
$r^{\eps,(1)}$ converges a.s. in $L^2(\R^d)$ to the unique solution of \eqref{eq:limit_PDE_with_drift}.
\end{lemma}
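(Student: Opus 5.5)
The plan is to decompose $\langle H\rangle(s)=H^{\rm eff}+\widetilde H(s)$ with $\widetilde H(s):=\langle H\rangle(s)-H^{\rm eff}$, and split $r^{\eps,(1)}=\rho^\eps_1+\rho^\eps_2$ by linearity. Here $\rho^\eps_1$ solves \eqref{eq:drift_part_3} with right-hand side $H^{\rm eff}\nabla\nabla\nabla u^0$, and $\rho^\eps_2$ solves it with right-hand side $\widetilde H(t/\eps^2)\nabla\nabla\nabla u^0$; both have zero initial data.

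\textbf{The deterministic part $\rho^\eps_1$.} The right-hand side is a fixed Schwartz function, so I will follow the same corrector construction used to derive \eqref{eq:second_step}. Let $u$ denote the solution of \eqref{eq:limit_PDE_with_drift}; it is Schwartz in $x$ together with its time derivatives. I will set
\[
\rho^\eps_1=u+\eps\chi_1\big(\tfrac x\eps,\tfrac t{\eps^2}\big)\nabla u+\eps^2\chi_2\big(\tfrac x\eps,\tfrac t{\eps^2}\big)\nabla\nabla u+\eps\,\tilde\rho^\eps .
\]
Then $\tilde\rho^\eps$ solves an equation of the same structure as \eqref{eq:second_step}, with $u^0$ replaced by $u$ and an additional source $H^{\rm eff}\nabla\nabla\nabla u^0$ cancelled by the equation for $u$.

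\begin{itemize}
\item The leading term of this equation is $\eps^{-1}\widetilde\Theta(t/\eps^2)\nabla\nabla u$. By Lemma \ref{lem:approx_sol} (its argument applies verbatim with $u$ in place of $u^0$), this term contributes $\kappa^\eps\nabla\nabla u$ plus a small error. Multiplied by the prefactor $\eps$, its contribution is $\eps\kappa^\eps\nabla\nabla u$.
\item I have $\eps\kappa^\eps\to0$ a.s. in $C([0,T])$ by the Birkhoff argument already used for $w_1$. Hence this contribution is a.s. negligible.
\item The remaining terms are $O(1)$ sources. Multiplied by $\eps$, they are handled exactly as $R^{\eps,(2)}$ and $r^{\eps,(3)}$, using \cite[Proposition 4.1]{piat:zhiz:25}.
\end{itemize}

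Thus $\rho^\eps_1\to u$ in $L^\infty(0,T;L^2)$ a.s.

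\textbf{The oscillating part $\rho^\eps_2$.} Its source has zero mean in time but is only $O(1)$ in size, with no $\eps^{-1}$ prefactor. I will mimic the treatment of $w_3$ in Lemma \ref{lem:approx_sol}. Set $K^\eps(t)=\int_0^t\widetilde H(s/\eps^2)\,ds=\eps^2\int_0^{t/\eps^2}\widetilde H(r)\,dr$. By the Birkhoff ergodic theorem, since $\mathbb E\widetilde H=0$ and $\widetilde H$ is bounded (because $\chi_1,\chi_2\in L^\infty(L^2(\mathbb T^d))$ and $a$ has three moments), $K^\eps\to0$ a.s. The convergence is uniform on $[0,T]$ by equicontinuity ($|K^\eps(t)-K^\eps(t')|\le C|t-t'|$). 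I then write
\[
\rho^\eps_2=K^\eps(t)\big[\nabla\nabla\nabla u^0+\eps\chi_1\nabla^{(4)}u^0\big]+v^\eps .
\]
The function $v^\eps$ solves an equation whose right-hand side is $K^\eps$ times bounded terms, plus $\widetilde H\,\eps\chi_1\nabla^{(4)}u^0$. It also contains the terms produced when $L^\eps_t$ acts on the correctors, where the $\eps^{-1}$ term cancels through \eqref{eq:first_corrector} and the $O(1)$ remainder is multiplied by $K^\eps$. All these pieces tend to zero a.s. in $L^\infty(0,T;L^2)$, the $z$-integrals being controlled via the moment bound as in Lemma \ref{lem_3_i}. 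Then \cite[Proposition 4.1]{piat:zhiz:25} gives $v^\eps\to0$, and hence $\rho^\eps_2\to0$ a.s.

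\textbf{Main obstacle.} The hard step is keeping every convergence almost sure rather than in probability. This matters in particular for $\eps\kappa^\eps$ in $\rho^\eps_1$ and for $K^\eps$, which I will handle by Birkhoff plus equicontinuity as above. It also requires checking that the $O(\eps^{-1})$ terms generated by $L^\eps_t$ acting on $K^\eps\nabla\nabla\nabla u^0$ cancel via the $\chi_1$ corrector, leaving only $O(1)\cdot K^\eps$ errors. Uniqueness for \eqref{eq:limit_PDE_with_drift} is standard, since it is a constant-coefficient heat equation with a Schwartz source.
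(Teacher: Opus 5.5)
Your structure matches the paper's. You split $r^{\eps,(1)}$ into a part driven by $H^{\rm eff}\nabla\nabla\nabla u^0$ and a part driven by $(\langle H\rangle(t/\eps^2)-H^{\rm eff})\nabla\nabla\nabla u^0$. For the oscillating part you use exactly the paper's ansatz $\vartheta^\eps(t)\big[\nabla\nabla\nabla u^0+\eps\chi_1\nabla^{(4)}u^0\big]$, with $\vartheta^\eps\to0$ a.s.\ by Birkhoff. The difference is the deterministic part. The paper simply invokes the homogenization theorem \cite[Theorem 2.1]{piat:zhiz:25} for $\varrho^\eps$, while you rebuild the limit by a two-scale expansion with $\chi_1,\chi_2$ and a remainder estimate. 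Your route is longer but self-contained, and it exposes the error structure: $O(\eps)$ plus a term controlled by $\sup_t|\eps\kappa^\eps(t)|$. The paper's route is shorter but leans on the external theorem applying with an inhomogeneous source.

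One step in your $\rho^\eps_1$ argument is misstated. Lemma \ref{lem:approx_sol} does not apply verbatim with $u$ in place of $u^0$. Its proof uses $\partial_t u^0=\Theta^{\rm eff}\nabla\nabla u^0$, whereas $\partial_t u=\Theta^{\rm eff}\nabla\nabla u+H^{\rm eff}\nabla\nabla\nabla u^0$. So the residual for $R-\kappa^\eps[\nabla\nabla u+\ldots]$ picks up an extra source $-\kappa^\eps(t)\,\nabla\nabla\big(H^{\rm eff}\nabla\nabla\nabla u^0\big)$, which is of order one in probability. Hence the solution driven by $\eps^{-1}\widetilde\Theta(t/\eps^2)\nabla\nabla u$ is \emph{not} $\kappa^\eps\nabla\nabla u$ plus a vanishing error, and that lemma only gives convergence in probability anyway.

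Your conclusion still holds because of the prefactor $\eps$. The relevant source is $\eps\cdot\eps^{-1}\widetilde\Theta(t/\eps^2)\nabla\nabla u=\widetilde\Theta(t/\eps^2)\nabla\nabla u$, which has the same form as the source of your $\rho^\eps_2$. The same ansatz, with $\vartheta(t)=\int_0^t\widetilde\Theta(s/\eps^2)\,ds=\eps\kappa^\eps(t)\to0$ a.s.\ uniformly on $[0,T]$, handles it almost surely, with no need for the CLT or Lemma \ref{lem:approx_sol}.
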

\begin{proof}
Let us split $r^{\eps,(1)}$ into two parts  $\rho^\eps$ and $\varrho^\eps$,  $r^{\eps,(1)}=\rho^\eps + \varrho^\eps$, such that
\begin{itemize}
\item $\rho^\eps$ solves
\begin{equation}\label{eq_rho_plain}
\partial_t  \rho^{\eps} (x,t)  - L^\eps_t \rho^{\eps} (x,t)  =  \left(  \langle H \rangle \left(\dfrac{t}{\eps^2}\right) - H^{{\rm eff}} \right)  \nabla \nabla \nabla u^0(x , t)
\end{equation}
\item $\varrho^\eps$ solves
\begin{equation}\label{eq_rho_var}
\partial_t  \varrho^{\eps} (x,t)  - L^\eps_t \varrho^{\eps} (x,t)  =  H^{{\rm eff}}  \nabla \nabla \nabla u^0(x , t)
\end{equation}
\end{itemize}
both with zero initial condition. From \cite[Theorem 2.1 ]{piat:zhiz:25}, it follows that Problem
\eqref{eq_rho_var} admits homogenization. In particular, $\varrho^\eps$ converges a.s. in $L^2(\R^d \times (0,T))$ to the solution of \eqref{eq:limit_PDE_with_drift}. In order to show that  $\rho^\eps$ converges a.s. to zero in $L^2(\R^d \times (0,T))$  
we construct the ansatz
$$
\widetilde{\rho}^\eps=\vartheta^\eps(t)\big[\nabla\nabla\nabla u^0(x,t)+\eps\chi_1\Big(\frac x\eps,\frac t{\eps^2}\Big)\nabla\nabla\nabla\nabla u^0(x,t)\big],\quad
 \vartheta^\eps(t)=\int_0^t \Big(\langle H \rangle \left(\dfrac{s}{\eps^2}\right) - H^{{\rm eff}}\Big)ds.
$$
One can easily check that a.s.
$$
\big\|\partial_t(\widetilde{\rho}^\eps-\rho^\eps)-L_t^\eps(\widetilde{\rho}^\eps-\rho^\eps)\big\|_{L^2(\mathbb R^d\times[0,T])}
\to 0,\quad\hbox{as }\eps\to0.
$$
Therefore, $\|\widetilde{\rho}^\eps-\rho^\eps\|_{L^\infty(0,T;L^2(\mathbb R^d))}\to 0$ a.s.  Since by construction
$\|\widetilde{\rho}^\eps\|_{L^\infty(0,T;L^2(\mathbb R^d))}\to 0$ a.s.,  we conclude that
$\|{\rho}^\eps\|_{L^\infty(0,T;L^2(\mathbb R^d))}\to 0$.
\end{proof}

\medskip
The statement of Proposition \ref{prop:convergence_drift_part} is a consequence of the previous three Lemmata.

\subsection{Proof of Theorem \ref{thm:main_result}}

The proof now follows immediately from \eqref{eq:expansion}, the representation $R^\eps = \eps \left[R^{\eps,(1)} +R^{\eps,(2)}\right] $ and Propositions \ref{prop:convergence_random_part} and \ref{prop:convergence_drift_part}.

\section{The non-symmetric case}\label{s_no_sy}

In this part we do not assume that Hypothesis {\bf H5} holds. From \cite{piat:zhiz:25}, it is known that, in contrast with the symmetric case, in general the homogenized problem is stochastic. Moreover, the convergence is valid in moving coordinates. Let us explain the main idea of construction of the moving coordinates. We again start with the expansion \eqref{eq:expansion}:
\begin{align} \label{eq:expansion_2}
u^\eps(x,t) & = u^0(x^\eps,t) + \eps \chi_1 \left( \dfrac{x}{\eps}, \dfrac{t}{\eps^2} \right) \nabla u^0(x^\eps,t) \\ \nonumber
& + \eps^2 \chi_2\left( \dfrac{x}{\eps}, \dfrac{t}{\eps^2} \right) \nabla\nabla  u^0(x^\eps,t) + R^\eps(x,t)
\end{align}
where $x^\eps = x -\dfrac{1}{\eps} \mathcal{G}^\eps(t)$ is the moving coordinates. The process $\mathcal{G}^\eps$ is given by
$$
\mathcal{G}^\eps(t) = \int_0^t \beta\left( \dfrac{s}{\eps^2}\right) ds,
$$
where the stationary process $\beta(s)$ will be defined later. Applying  the operator $\partial_t - L^\eps_t$ to this expansion of $u^\eps$ and collecting the terms of order $\eps^{-1}$ we arrive at the equation which is similar to equation \eqref{eq:first_corrector} up to the term $\beta$, it reads
\begin{equation} \label{eq:first_corrector_bis}
\partial_s \chi_1 (\xi, s) - (L_s \chi_1 ) \left( \xi,  s \right) = -\int_{\R^d} a\left( z\right) \Lambda\left( \xi,  \xi -z,  s \right) z dz + \beta\left( s \right).
\end{equation}
According to \cite[Lemma 3.8]{piat:zhiz:25}, this equation has a stationary solution  if
\begin{equation} \label{eq:def_beta}
\beta(s) =\int_{\mathbb T^d} \left(  \int_{\R^d} a\left( z\right) \Lambda\left( \xi,  \xi -z,  s \right) z dz \right) p(\xi,s) d\xi.
\end{equation}
Here and in the rest of this part, $p$ denotes the unique stationary solution of the equation $-\partial_t p = L^*_t p$ such that
$$
\int_{\mathbb T^d} p(\xi,s) d\xi = 1,
$$
where $L^*$ is the adjoint to $L$ operator.
The existence and uniqueness of  such a solution are granted by \cite[Proposition 3.2]{piat:zhiz:25}. 
Moreover, there exist two constants $0< \pi_1 \leq \pi_2$ such that $\pi_1 \leq p(\xi,s) \leq \pi_2$ for any $(\xi ,s)$,
see \cite[Proposition 3.2]{piat:zhiz:25}. The pair $(\Lambda(\xi,\eta,s), p(\xi,s))$ is also stationary. Notice that  in the symmetric case $p(\xi,s) = 1$.

 The function $\chi_1$ is uniquely defined up to an additive constant. From now on we assume that
 \begin{equation}\label{normaliz_chi1}
 \int_{\mathbb T^d}\chi_1(\xi,s)p(\xi,s)d\xi=0.
 \end{equation}
Let us show that a.s. the integral on the left-hand side does not depend on $s$. Indeed,
differentiating this integral in $s$ yields
$$
\partial_s\int_{\mathbb T^d}\chi_1(\xi,s)p(\xi,s)d\xi=\int_{\mathbb T^d}(L_s\chi_1(\xi,s))p(\xi,s)d\xi
$$
$$
-\int_{\mathbb T^d}\bigg(\int_{\R^d} a\left( z\right) \Lambda\left( \xi,  \xi -z,  s \right) z dz\bigg) p(\xi,s)d\xi
+\beta(s)-\int_{\mathbb T^d}\chi_1(\xi,s) L_s^\ast p(\xi,s)d\xi=0.
$$
Therefore, normalization condition \eqref{normaliz_chi1} is well-defined.

By stationarity, the process $\beta(s)$ has a constant mean value $\bar \beta = \mathbb E \beta(s)$. Then
$$\dfrac{1}{\eps}  \mathcal{G}^\eps(t)  = \dfrac{1}{\eps}  \int_0^t  \left[ \beta\left( \dfrac{s}{\eps^2}\right) -\bar \beta  \right] ds + \dfrac{\bar \beta}{\eps} t = \mathcal{G}^\eps_0(t) +  \dfrac{\bar \beta}{\eps} t  .$$
Under suitable mixing condition, $\mathcal{G}^\eps_0$ converges to $\sigma W_t$, where $W_\cdot$ is a standard Brownian motion (see \cite[Lemma 5.1]{piat:zhiz:25}). In other words, the process $x^\eps - \dfrac{\bar \beta }{\eps} t$ defining moving coordinates converges to the  stochastic process  $(x+\sigma W_t)$, and the effective dynamics remains random.

To have a deterministic homogenized problem, from now on we assume that:
\begin{itemize}
\item[{\bf H6}] The process $\beta$ is deterministic.
\end{itemize}
By stationary, $\beta$ does not depend on time and $\beta = \bar \beta$.
\begin{remark}
Condition {\bf H6} holds in the following cases:
\begin{itemize}
\item If {\bf H5} holds (symmetric case), then $\beta(s) = 0$ for any $s$.
\item If $\Lambda(x,y,s) = \lambda(x,y) \mu(s)$, with
$$\int_{\mathbb T^d} \left(  \int_{\R^d} a\left( z\right) \lambda\left( \xi,  \xi -z \right) z dz \right) p(\xi) d\xi = 0,$$
where $p\in L^2(\mathbb T^d)$, $p\not=0$,  solves the equation:
$$\int_{\R^d} a\left( y-x \right) \lambda\left( y,  x \right)  p(y) dy - \left(  \int_{\R^d} a\left( x-y \right) \lambda\left( x, y \right) dy \right)  p(x) = 0.$$
Then again $\beta(s) = 0$ for any $s$.
\end{itemize}
\end{remark}

Hence we suppose that \eqref{eq:expansion_2} holds with $x^\eps = x -\dfrac{\beta}{\eps} t$. Substituting the right-hand side of \eqref{eq:expansion_2} for $u^\eps$ in the original equation in \eqref{eq:u^eps}
and assuming that $u^0\in C^\infty(0,T;\mathcal{S}(\mathbb R^d))$, in the same way as in the symmetric case we obtain
\begin{align} \label{eq:first_step_nonsym}
& \partial_t  u^0(x^\eps,t) + \partial_t  R^\eps(x,t) + \partial_s \chi_2\Big( \dfrac{x}{\eps}, \dfrac{t}{\eps^2} \Big) \nabla\nabla  u^0(x^\eps,t) - \chi_1 \Big( \dfrac{x}{\eps}, \dfrac{t}{\eps^2} \Big)\otimes\beta\nabla \nabla u^0(x^\eps,t) \\  \nonumber
& \ \  +  \eps \chi_1 \Big( \dfrac{x}{\eps}, \dfrac{t}{\eps^2} \Big) \partial_t \nabla u^0(x^\eps,t) + \eps^2 \chi_2\Big( \dfrac{x}{\eps}, \dfrac{t}{\eps^2} \Big) \partial_t \nabla\nabla  u^0(x^\eps,t)
-\eps \chi_2\Big( \dfrac{x}{\eps}, \dfrac{t}{\eps^2} \Big)\otimes\beta \nabla \nabla\nabla  u^0(x^\eps,t)  \\ \nonumber
& =  L^\eps_t R^\eps(x,t) +  \frac12\int_{\R^d} a( z) \Lambda\Big( \dfrac{x}{\eps},  \dfrac{x}{\eps} -z,  \dfrac{t}{\eps^2} \Big)(z \otimes z) dz\, \nabla \nabla u^0(x^\eps, t) \\ \nonumber
& - \frac\eps2\int_{\R^d} a( z) \Lambda\Big( \dfrac{x}{\eps},  \dfrac{x}{\eps} -z,  \dfrac{t}{\eps^2} \Big)(z \otimes z\otimes z)
\int_0^1 \nabla\nabla\nabla u^0(x^\eps-\eps qz, t)(1-q)^2dq dz \\ \nonumber
&\quad  -  \int_{\R^d} a( z) \Lambda\Big( \dfrac{x}{\eps},  \dfrac{x}{\eps} -z,  \dfrac{t}{\eps^2} \Big) \chi_1 \Big( \dfrac{x}{\eps} -z, \dfrac{t}{\eps^2} \Big)\otimes z dz\,\nabla\nabla u^0 ( x^\eps, t)   \\ \nonumber
&\quad  - \eps \int_{\R^d} a\left( z\right) \Lambda\left( \dfrac{x}{\eps},  \dfrac{x}{\eps} -z,  \dfrac{t}{\eps^2} \right) \chi_1 \left( \dfrac{x}{\eps} -z, \dfrac{t}{\eps^2} \right)\otimes z\otimes z\int_0^1 \nabla\nabla\nabla u^0 \left( x^\eps- \eps q z, t\right)(1-q) dq   dz \\ \nonumber
&\quad  +(L_t \chi_2 ) \Big( \dfrac{x}{\eps},  \dfrac{t}{\eps^2} \Big) \nabla\nabla u^0(x^\eps,t)  \\ \nonumber
&\quad  +\eps \int_{\R^d} a( z) \Lambda\Big( \dfrac{x}{\eps},  \dfrac{x}{\eps} -z,  \dfrac{t}{\eps^2} \Big) \chi_2 \Big( \dfrac{x}{\eps} -z, \dfrac{t}{\eps^2} \Big)\otimes z\int_0^1   \nabla\nabla\nabla u^0( x^\eps- \eps q z, t)  dq  dz;
\end{align}
here we have used the fact that due to the choice of $\chi_1$ the sum of the terms of order $\eps^{-1}$ vanishes.

At the next step we define the matrices
\begin{align} \nonumber
h(\xi,s) & =  \int_{\R^d} a\left( z\right) \Lambda\left( \xi,  \xi -z,  s \right)  \left( \dfrac{1}{2} (z \otimes z) - \chi_1 \left( \xi -z, s \right)\otimes z  \right)   dz +\beta\otimes\chi_1(\xi,s)
\\ \label{eq:effective_matrix_nonsym}
\Theta(s) & = \int_{\mathbb T^d}  h(\xi,s)p(\xi,s) d\xi , \quad \Theta^{{\rm eff,ns}} = \mathbb E (\Theta(s)) \quad \widetilde \Theta(s) = \Theta(s) - \Theta^{{\rm eff,ns}} .
\end{align}
and introduce the function
 $\chi_2(\xi,s)$ as a stationary solution of the equation

\begin{equation} \label{eq:second_corrector_bis}
\partial_s \chi_2 (\xi, s) - (L_t \chi_2 ) \left( \xi,  s \right) = h(\xi,s) - \Theta(s). 
\end{equation}
According to  \cite[Lemma 3.8]{piat:zhiz:25} this equation has a unique up to an additive constant stationary solution $\chi_2(\xi,s)$.  As was shown in \cite[Lemma 1]{piat:zhiz:23a} the matrix $\Theta^{\rm eff,ns}$ is positive definite.
Then we define  $u^0$ as the solution of the homogenized problem that reads
\begin{equation}\label{eff_eq_neso}
 \partial_t  u^0(x,t) =  \Theta^{{\rm eff,ns}} \nabla \nabla u^0(x,t),\qquad u^0(x,0)=\imath(x).
\end{equation}
Note that $u^0$ and all its time derivatives are $C^0(0,T;\mathcal{S}(\mathbb R^d))$.
\begin{lemma}
Let functions $\chi_1$ and $\chi_2$ be given by \eqref{eq:first_corrector_bis} and  \eqref{eq:second_corrector_bis},
respectively.
Then the remainder $R^\eps$ can be represented as $\eps [R^{\eps,(1)} + R^{\eps,(2)}]$, where
\begin{itemize}
\item $R^{\eps,(1)}$  solves  the equation 
\begin{equation} \label{eq:stoch_part_bis}
\partial_t  R^{\eps,(1)} (x,t)  - L^\eps_t R^{\eps,(1)} (x,t)  = \dfrac{1}{\eps} \widetilde \Theta \left( \dfrac{t}{\eps^2}\right) \nabla \nabla u^0(x^\eps,t)
\end{equation}
with initial condition zero;
\item $R^{\eps,(2)} $ solves  the equation that reads 
\begin{align} \label{eq:drift_part_bis}
&  \partial_t  R^{\eps,(2)}(x,t)  - L^\eps_t R^{\eps,(2)}(x,t)  = H\Big( \dfrac{x}{\eps},\dfrac{t}{\eps^2}\Big)  \nabla \nabla \nabla u^0(x^\eps , t)   \\ \nonumber
& \quad -  \chi_1 \left( \dfrac{x}{\eps}, \dfrac{t}{\eps^2} \right) \partial_t \nabla u^0(x^\eps,t)  - \eps \chi_2\left( \dfrac{x}{\eps}, \dfrac{t}{\eps^2} \right) \partial_t \nabla\nabla  u^0(x^\eps,t) \\ \nonumber
& \quad +  \int_{\R^d} a\left( z\right) \Lambda\Big( \dfrac{x}{\eps},  \dfrac{x}{\eps} -z,  \dfrac{t}{\eps^2} \Big) G^\eps(x,z,t) dz -\chi_2\left( \dfrac{x}{\eps}, \dfrac{t}{\eps^2} \right)\otimes\beta \nabla\nabla \nabla  u^0(x^\eps,t),
\end{align}
with the  initial condition $\frac1\eps R^\eps(x,0)$;
here
$$
H\Big( \dfrac{x}{\eps},\dfrac{t}{\eps^2}\Big)=
\int_{\mathbb R^d}a(z)\Lambda\Big( \dfrac{x}{\eps},  \dfrac{x}{\eps} -z,  \dfrac{t}{\eps^2} \Big)
\Big\{\frac16 z\otimes z\otimes z-\frac12\chi_1\Big(\frac x\eps-z,\frac t{\eps^2}\Big)\otimes z\otimes z
$$
$$
+\chi_2\Big(\frac x\eps-z,\frac t{\eps^2}\Big)\otimes z\Big\}dz,
$$
and
\begin{align*}
& G^\eps(x,z,t)  = \frac12z\otimes z\otimes z \int_0^1\big(\nabla\nabla\nabla u^0(x^\eps-\eps q z,t)-
\nabla\nabla\nabla u^0(x^\eps,t)\big)(1-q)^2dq \\
& \quad -\chi_1\Big(\frac x\eps -z,\frac t{\eps^2}\Big)\otimes z\otimes z \int_0^1\big(\nabla\nabla\nabla u^0(x^\eps-\eps q z,t)-
\nabla\nabla\nabla u^0(x^\eps,t)\big)(1-q)dq\\
& \quad +\chi_2\Big(\frac x\eps -z,\frac t{\eps^2}\Big)\otimes z \int_0^1\big(\nabla\nabla\nabla u^0(x^\eps-\eps q z,t)-
\nabla\nabla\nabla u^0(x^\eps,t)\big)dq .
\end{align*}
\end{itemize}
Let us emphasize that in the expressions on the right-hand sides of \eqref{eq:stoch_part_bis} and \eqref{eq:drift_part_bis}, the argument $x$ of the function $u^0$ and its derivatives have been replaced with $x^\eps$.
\end{lemma}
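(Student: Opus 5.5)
This is a direct computation that follows the symmetric case. I substitute \eqref{eq:expansion_2} into \eqref{eq:u^eps}, use \eqref{eq:first_step_nonsym}, and then collect terms by order in $\eps$. By linearity, each group of terms becomes the source of either $R^{\eps,(1)}$ or $R^{\eps,(2)}$.

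\emph{Time derivative.} Since $x^\eps=x-\beta t/\eps$, I have $\partial_t[\phi(x^\eps,t)]=(\partial_t\phi)(x^\eps,t)-\eps^{-1}\beta\cdot\nabla\phi(x^\eps,t)$.

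\begin{itemize}
\item Applied to $u^0$, this produces the term $-\eps^{-1}\beta\nabla u^0(x^\eps,t)$. It cancels against the $+\beta$ on the right-hand side of \eqref{eq:first_corrector_bis}.
\item Applied to $\eps\chi_1\nabla u^0$ and $\eps^2\chi_2\nabla\nabla u^0$, it produces the $\beta$-terms already displayed on the left-hand side of \eqref{eq:first_step_nonsym}.
\end{itemize}

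\emph{Spatial part.} The operator $L^\eps_t$ commutes with the deterministic shift $x\mapsto x-\beta t/\eps$ only in the slow variable. However, the Taylor expansion leading to \eqref{eq:space_deriv} is carried out in the slow argument alone, with the fast variable $x/\eps$ untouched. It therefore goes through verbatim with $u^0(\cdot,t)$ evaluated at $x^\eps-\eps qz$. This justifies \eqref{eq:first_step_nonsym}, which the statement takes as given.

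\emph{Order $\eps^0$.} I move the term $-\chi_1\otimes\beta\nabla\nabla u^0(x^\eps,t)$ to the right-hand side. The $\eps^0$ coefficient of $\nabla\nabla u^0(x^\eps,t)$ is then
\[
-\partial_s\chi_2+L_s\chi_2+h(\xi,s)\big|_{\xi=x/\eps,\ s=t/\eps^2},
\]
with $h$ defined in \eqref{eq:effective_matrix_nonsym}; the extra summand $\beta\otimes\chi_1$ in $h$ is exactly what absorbs that term. By \eqref{eq:second_corrector_bis}, this coefficient equals $\Theta(t/\eps^2)$. Since $\partial_t u^0=\Theta^{\rm eff,ns}\nabla\nabla u^0$ by \eqref{eff_eq_neso}, the $\eps^0$ remainder is $\widetilde\Theta(t/\eps^2)\nabla\nabla u^0(x^\eps,t)$. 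This is $\eps$ times the source in \eqref{eq:stoch_part_bis}.

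\emph{Order $\eps^1$.} I split each $\int_0^1\nabla^{(3)}u^0(x^\eps-\eps qz,t)\,w(q)\,dq$ as
\[
\Big(\int_0^1 w\Big)\nabla^{(3)}u^0(x^\eps,t)+\int_0^1\big(\nabla^{(3)}u^0(x^\eps-\eps qz,t)-\nabla^{(3)}u^0(x^\eps,t)\big)w(q)\,dq.
\]
The weights integrate as $\int_0^1(1-q)^2dq=\tfrac13$, $\int_0^1(1-q)\,dq=\tfrac12$ and $\int_0^1dq=1$. The first parts then assemble into $H\,\nabla^{(3)}u^0(x^\eps,t)$, and the second parts into the $G^\eps$ integral. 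The remaining $\eps$-order terms are
\[
-\chi_1\partial_t\nabla u^0,\qquad -\eps\chi_2\,\partial_t\nabla\nabla u^0,\qquad -\chi_2\otimes\beta\,\nabla^{(3)}u^0.
\]
After dividing by $\eps$, together with $H\nabla^{(3)}u^0$ and the $G^\eps$ integral they give the source in \eqref{eq:drift_part_bis}.

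\emph{Initial data.} At $t=0$ we have $x^\eps=x$, so $R^\eps(x,0)=-\eps\chi_1\nabla\imath-\eps^2\chi_2\nabla\nabla\imath$. I assign zero initial data to $R^{\eps,(1)}$ and $\eps^{-1}R^\eps(\cdot,0)$ to $R^{\eps,(2)}$.

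\emph{Conclusion.} The function $\eps[R^{\eps,(1)}+R^{\eps,(2)}]$ and $R^\eps$ solve the same linear Cauchy problem. Uniqueness, from the well-posedness of the Cauchy problem (\cite[Proposition 4.1]{piat:zhiz:25}), gives $R^\eps=\eps[R^{\eps,(1)}+R^{\eps,(2)}]$.

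The main obstacle is bookkeeping rather than analysis. Two points need care: the signs of the $\beta$-terms coming from the moving frame, and the signs of $H$ and $G^\eps$, which differ from the symmetric case because the terms now appear on the opposite side of the equation. I would first check that $\beta\otimes\chi_1$ enters $h$ with exactly the right sign and tensor order. Otherwise $\widetilde\Theta$ would acquire a non-centered piece, and \eqref{eq:stoch_part_bis} would fail.
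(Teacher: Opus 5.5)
Your route is the paper's. Its proof only notes that, by \eqref{eq:second_corrector_bis} and \eqref{eff_eq_neso}, the $\eps^0$ terms reduce to $\widetilde\Theta(t/\eps^2)\nabla\nabla u^0(x^\eps,t)$, and leaves the rest to ``direct inspection''. Your handling of the orders $\eps^{-1}$ and $\eps^0$, of the initial data and of uniqueness is correct.

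\textbf{The gap is the order-$\eps$ step.} This is the only real content of the lemma, and you assert it rather than compute it. Done honestly, it does not give the stated formula.

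As you note yourself, the moving frame does not affect the Taylor expansion in the slow variable, so the $L^\eps_t$-terms come out exactly as in the symmetric case. From
$\nabla u^0(y-\eps z)-\nabla u^0(y)=-\eps\nabla\nabla u^0(y)z+\eps^2\int_0^1\nabla^{(3)}u^0(y-\eps qz)(z\otimes z)(1-q)\,dq$
and
$\nabla\nabla u^0(y-\eps z)-\nabla\nabla u^0(y)=-\eps\int_0^1\nabla^{(3)}u^0(y-\eps qz)z\,dq$,
the cubic contributions of the $\chi_1$- and $\chi_2$-terms to $L^\eps_t u^\eps$ carry the signs $+$ and $-$, respectively. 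These are exactly the two signs misprinted in \eqref{eq:space_deriv} and \eqref{eq:first_step_nonsym}, so \eqref{eq:first_step_nonsym} cannot be used as a black box. Add the correctly signed term $-\frac\eps2\int a\Lambda\,z\otimes z\otimes z\int_0^1\nabla^{(3)}u^0(x^\eps-\eps qz,t)(1-q)^2dq\,dz$, where $a\Lambda$ stands for $a(z)\Lambda(\frac x\eps,\frac x\eps-z,\frac t{\eps^2})$, and split with your weights $\frac13,\frac12,1$. This gives
\[
H(\xi,s)=\int_{\R^d}a(z)\Lambda(\xi,\xi-z,s)\Big\{-\frac16\,z\otimes z\otimes z+\frac12\chi_1(\xi-z,s)\otimes z\otimes z-\chi_2(\xi-z,s)\otimes z\Big\}dz,
\]
together with the symmetric-case $G^\eps$. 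Both are the negatives of the printed ones.

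Likewise, moving $-\eps\chi_2\otimes\beta\nabla^{(3)}u^0(x^\eps,t)$ from the left of \eqref{eq:first_step_nonsym} to the right gives $+\chi_2\otimes\beta\nabla^{(3)}u^0(x^\eps,t)$ in the source of $R^{\eps,(2)}$, not the $-$ you list. Your explanation that the signs differ from the symmetric case ``because the terms now appear on the opposite side'' is wrong. In both \eqref{eq:drift_part} and \eqref{eq:drift_part_bis} these terms sit on the same side, as sources for $\partial_tR^{\eps,(2)}-L^\eps_tR^{\eps,(2)}$.

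\textbf{This is not cosmetic.} Take $\Lambda\equiv1$. Then $p\equiv1$, $\beta=\int a(z)z\,dz$ (so {\bf H6} holds), $\chi_1=\chi_2=0$ and $\widetilde\Theta=0$. Hence $R^{\eps,(1)}=0$, and $\eps^{-1}R^\eps=\eps^{-1}(u^\eps-u^0(x^\eps,t))$ solves the problem with zero data and source $-\frac12\int a(z)\,z\otimes z\otimes z\int_0^1\nabla^{(3)}u^0(x^\eps-\eps qz,t)(1-q)^2dq\,dz$. The printed source for $R^{\eps,(2)}$ is exactly the negative of this. The printed identity would therefore force $R^\eps=0$, which fails as soon as $\int a\,z\otimes z\otimes z\,dz\neq0$.

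The Fourier transform confirms this: $\hat u^\eps(k,t)=e^{-ik\cdot\beta t/\eps}\hat u^0(k,t)\big(1+\frac{i\eps t}{6}\int a(z)(k\cdot z)^3dz+o(\eps)\big)$, which corresponds to $H=-\frac16\int a\,z\otimes z\otimes z\,dz$.

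What your argument actually proves is the lemma with $H$ and $G^\eps$ replaced by their negatives and with $+\chi_2\otimes\beta\nabla^{(3)}u^0$; the paper's one-line inspection does not catch this either. Since $\int\chi_1p\,d\xi=0$, the correction also reverses the sign of $H^{\rm eff,ns}$ in \eqref{def_H_nsym}. You were right to flag sign bookkeeping as the crux, but it has to be carried out, not rationalized to match the target.
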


\begin{proof}
Since $\chi_2$ satisfies equation \eqref{eq:second_corrector_bis},  the sum of the terms of order $\eps^0$ in \eqref{eq:first_step_nonsym} is equal to $\partial_t u^0(x,t)-\Theta\big(\frac t{\eps^2}\big)\nabla\nabla u^0(x,t)$.
In view of \eqref{eff_eq_neso} the latter expression is equal to $\widetilde\Theta\big(\frac t{\eps^2}\big)\nabla\nabla u^0(x,t)$.
Letting $R^{\eps,(1)}$ be a solution of \eqref{eq:stoch_part_bis} one can check by direct inspection that
$R^{\eps,(2)}$ is a solution of \eqref{eq:drift_part_bis}.
\end{proof}

The analysis of the remainders $R^{\eps,(1)}$  and $R^{\eps,(2)}$  relies on the following lemmata that generalize the statements of  Propositions \ref{prop:convergence_random_part} and \ref{prop:convergence_drift_part}:
\begin{lemma}\label{l_hom_rhs}
Let $U(x,t)$ be a function such that $U(\cdot,t)$ is from the Schwartz class in $\mathbb R^d$ for each $t\in[0,T]$.
Assume that the time derivatives of $U$ of any order are also Schwartz class functions for all  $t\in[0,T]$. Then
a solution $v^\eps$ of the Cauchy problem
$$
\partial_t v-L_t^\eps v=U(x^\eps,t),\qquad U(x,0)=0,
$$
satisfies a.s. the following limit relation
$$
\lim\limits_{\eps\to0} \|v^\eps(x,t)-v^0\big(x-\frac \beta \eps t,t\big)\|_{L^\infty(0,T;L^2(\mathbb R^d))}=0,
$$
where $v^0$ is a solution to the problem
$$
\partial_t v-\mathrm{div}\big(\Theta^{\rm eff,ns}\nabla v\big)=U(x,t),\qquad U(x,0)=0.
$$
\end{lemma}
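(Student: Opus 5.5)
We prove Lemma \ref{l_hom_rhs} by a two-scale ansatz in moving coordinates, mirroring the symmetric-case argument (Lemma \ref{lem:conv_drift_part} and the estimates for $w_2$, $r^{\eps,(3)}$). Set
$$
\tilde v^\eps(x,t)=v^0(x^\eps,t)+\eps\chi_1\Big(\frac x\eps,\frac t{\eps^2}\Big)\nabla v^0(x^\eps,t)+\eps^2\chi_2\Big(\frac x\eps,\frac t{\eps^2}\Big)\nabla\nabla v^0(x^\eps,t),
$$
with $\chi_1,\chi_2$ given by \eqref{eq:first_corrector_bis}, \eqref{eq:second_corrector_bis} and $x^\eps=x-\frac\beta\eps t$. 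Since $U$ and all its time derivatives are Schwartz functions and $U(\cdot,0)=0$, $v^0$ and its time derivatives lie in $C(0,T;\mathcal S(\mathbb R^d))$. Repeating the computation leading to \eqref{eq:first_step_nonsym} with $u^0$ replaced by $v^0$, the $\eps^{-1}$ terms cancel by the choice of $\beta$ and $\chi_1$, and the $\eps^0$ terms reduce, via \eqref{eq:second_corrector_bis}, to $\partial_t v^0-\Theta(t/\eps^2)\nabla\nabla v^0$. Using the equation for $v^0$, the difference $\delta^\eps=v^\eps-\tilde v^\eps$ therefore satisfies
$$
\partial_t\delta^\eps-L^\eps_t\delta^\eps=-\widetilde\Theta\Big(\frac t{\eps^2}\Big)\nabla\nabla v^0(x^\eps,t)+\eps F^\eps(x,t),
$$
with initial datum $-\eps\chi_1\nabla v^0(\cdot,0)-\eps^2\chi_2\nabla\nabla v^0(\cdot,0)$, whose $L^2$ norm is $O(\eps)$. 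Here $F^\eps$ collects the terms of the type appearing in \eqref{eq:drift_part_bis}: $H\nabla^{(3)}v^0$, $\chi_1\partial_t\nabla v^0$, $\chi_2\otimes\beta\nabla^{(3)}v^0$, and the $G^\eps$-integral.

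To control the $O(\eps)$ part we need $\|F^\eps\|_{L^\infty(0,T;L^2(\mathbb R^d))}\le C$, with $C$ uniform in $\eps$. The local terms are bounded by the $L^\infty(L^2(\mathbb T^d))$ bounds on the correctors together with the Schwartz decay of $v^0$. The nonlocal $G^\eps$-integral is handled exactly as in the proof for $r^{\eps,(3)}$, using the third-moment condition {\bf H1} and the Cauchy–Schwarz manipulation used in Lemma \ref{lem_3_i}. One point needs care: correctors evaluated at $x/\eps$, multiplied by a Schwartz function of $x^\eps$, still have uniformly bounded $L^2(\mathbb R^d)$ norm. This follows by periodicity, since the translation by $\beta t/\eps$ does not affect the averaging of $|\chi_i|^2$ over periods. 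The a priori estimate \cite[Proposition 4.1]{piat:zhiz:25} then shows that this contribution to $\delta^\eps$ is $O(\eps)$ in $L^\infty(0,T;L^2)$.

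The main obstacle is the oscillating term $\widetilde\Theta(t/\eps^2)\nabla\nabla v^0(x^\eps,t)$. It has zero mean but is of order one, so it cannot simply be dropped. Following the proof of Lemma \ref{lem:conv_drift_part}, we introduce
$$
\vartheta^\eps(t)=\int_0^t\widetilde\Theta(s/\eps^2)\,ds
$$
and compare with the corrector
$$
\vartheta^\eps(t)\Big[\nabla\nabla v^0(x^\eps,t)+\eps\chi_1\Big(\frac x\eps,\frac t{\eps^2}\Big)\nabla^{(3)}v^0(x^\eps,t)\Big].
$$
By the Birkhoff ergodic theorem, $\vartheta^\eps\to0$ a.s. in $C([0,T])$. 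Applying $\partial_t-L^\eps_t$ to this corrector reproduces the forcing term, up to remainders that are either multiplied by $\vartheta^\eps$ or of order $\eps$. Here the $\eps^{-1}$ terms generated by the moving frame cancel once more, because of \eqref{eq:first_corrector_bis}. All these remainders therefore vanish a.s. in $L^\infty(0,T;L^2)$, and \cite[Proposition 4.1]{piat:zhiz:25} gives that the corresponding part of $\delta^\eps$ tends to $0$ a.s.

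Collecting the three pieces, $\|\delta^\eps\|_{L^\infty(0,T;L^2)}\to0$ a.s. Moreover $\|\tilde v^\eps-v^0(x^\eps,t)\|_{L^\infty(0,T;L^2)}\le C\eps$. Together these give the claimed limit relation.
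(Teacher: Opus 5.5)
Your proposal is correct and follows the paper's route: a two-scale ansatz with $\chi_1,\chi_2$ in the moving frame $x^\eps=x-\frac{\beta}{\eps}t$, closed by the a priori estimate of \cite[Proposition 4.1]{piat:zhiz:25}. It also supplies the details the paper leaves to the reader, in particular treating the zero-mean term $\widetilde\Theta(t/\eps^2)\nabla\nabla v^0(x^\eps,t)$ with the Birkhoff corrector $\vartheta^\eps$, exactly as in Lemma \ref{lem:conv_drift_part} and Lemma \ref{l_timeosc_small}; the sign in front of $\widetilde\Theta$ in your equation for $\delta^\eps$ should be $+$, but this does not affect the argument.
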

\begin{proof}
To obtain the desired convergence one can write down the ansatz
$$
v^\eps(x,t)=v^0(x^\eps, t)+\chi_1\Big(\frac x\eps,\frac t{\eps^2}\Big)\nabla v^0(x^\eps,t)
\chi_2\Big(\frac x\eps,\frac t{\eps^2}\Big)\nabla\nabla v^0(x^\eps,t)+\ldots
$$
and use the standard two-scale expansion arguments of the homogenization theory, see for instance \cite{piat:zhiz:25}. 
We leave the details to the reader.
\end{proof}
\begin{lemma}\label{l_timeosc_small}
Let $\theta(s)$ be a stationary process such that $\mathbf{E}\theta(s)=0$, and assume that the pair $(\theta(s),\Lambda(\xi,\eta,s))$ is also a stationary function of $s$. Assume moreover that $U(x,t)$ and
its time derivative are Schwartz class functions for all $t\in[0,T]$.
Then  a solution $v_1^\eps$ of the Cauchy problem
\begin{equation}\label{eq_lem9}
\partial_t v-L_t^\eps v=\theta\Big(\frac t{\eps^2}\Big)U(x^\eps,t),\qquad v(x,0)=0,
\end{equation}
converges to zero in $L^\infty(0,T;L^2(\mathbb R^d))$.
\end{lemma}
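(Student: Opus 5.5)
We follow the ansatz argument used for $\rho^\eps$ in Lemma \ref{lem:conv_drift_part}, adapted to the moving frame $x^\eps=x-\frac{\beta}{\eps}t$. Introduce the random primitive
$$
\vartheta^\eps(t)=\int_0^t\theta\Big(\frac s{\eps^2}\Big)ds=\eps^2\int_0^{t/\eps^2}\theta(r)\,dr .
$$
By the Birkhoff ergodic theorem, $\mathbf E\theta=0$ and stationarity, for each $t$ we have $\vartheta^\eps(t)\to0$ a.s. as $\eps\to0$. Assuming $\theta$ bounded (as in all applications, e.g.\ $\theta=\langle H\rangle-H^{\rm eff}$), the functions $\vartheta^\eps$ are equi-Lipschitz on $[0,T]$. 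Hence $\sup_{[0,T]}|\vartheta^\eps|\to0$ a.s.; if $\theta$ is only integrable, we use the maximal ergodic theorem instead.

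Next, set
$$
\widetilde v^\eps(x,t)=\vartheta^\eps(t)\Big[U(x^\eps,t)+\eps\chi_1\Big(\frac x\eps,\frac t{\eps^2}\Big)\nabla U(x^\eps,t)\Big],
$$
with $\chi_1$ the corrector from \eqref{eq:first_corrector_bis}. We then compute $(\partial_t-L^\eps_t)\widetilde v^\eps$.

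\begin{itemize}
\item Differentiating $\vartheta^\eps$ gives $\theta(t/\eps^2)[U(x^\eps,t)+\eps\chi_1\nabla U(x^\eps,t)]$. The first part reproduces the right-hand side of \eqref{eq_lem9}. The remainder $\eps\theta\chi_1\nabla U$ has $L^2$-norm $O(\eps)$, since $\chi_1\in L^\infty(L^2(\mathbb T^d))$ and $U$ is Schwartz.
\item The remaining terms all carry the factor $\vartheta^\eps(t)$. The chain rule in the moving frame produces $-\eps^{-1}\beta\cdot\nabla U(x^\eps,t)$. Applying $L^\eps_t$ to $U(x^\eps,t)$ and expanding as in \eqref{eq:space_deriv} produces $\eps^{-1}g\nabla U$. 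The term $\eps\chi_1\nabla U$ produces $\eps^{-1}(\partial_s\chi_1-L_s\chi_1)\nabla U$.
\item By \eqref{eq:first_corrector_bis}, the three $\eps^{-1}$ terms in the previous item cancel exactly. This is the reason $\chi_1$ was defined with $\beta$.
\item What is left is $\vartheta^\eps(t)$ times an $O(1)$ expression. It involves $h$-type integrals against $\nabla\nabla U$, $\partial_tU$, $\eps\chi_1\partial_t\nabla U$, and Taylor remainders. Its $L^2(\mathbb R^d)$-norm is bounded uniformly in $t$ using H1 (second moments suffice here), H2, $\chi_1\in L^\infty(L^2(\mathbb T^d))$, and the Schwartz property of $U$ and $\partial_tU$. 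The Cauchy–Schwarz argument over $z$ is the one in the proof of Lemma \ref{lem_3_i}.
\end{itemize}

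It follows that $\|(\partial_t-L^\eps_t)(\widetilde v^\eps-v_1^\eps)\|_{L^\infty(0,T;L^2)}\le C(\sup_t|\vartheta^\eps|+\eps)\to0$ a.s. Since $\widetilde v^\eps(\cdot,0)=v_1^\eps(\cdot,0)=0$, the $L^2$ stability estimate (\cite[Proposition 2]{piat:zhiz:23a} or \cite[Proposition 4.1]{piat:zhiz:25}) gives $\|\widetilde v^\eps-v_1^\eps\|_{L^\infty(0,T;L^2)}\to0$. Finally, $\|\widetilde v^\eps\|_{L^\infty(0,T;L^2)}\le C\sup_t|\vartheta^\eps(t)|\to0$, because translation by $\beta t/\eps$ preserves $L^2$ norms. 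Together these prove the lemma.

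The main obstacle is the exact cancellation of the singular $\eps^{-1}$ terms in the moving frame, combined with controlling the $O(1)$ remainder uniformly. Because $\vartheta^\eps\to0$ only qualitatively, every remainder must be bounded by a deterministic constant times $\sup_t|\vartheta^\eps|$ or by $o(1)$. If $\chi_1$ is merely in $L^\infty(L^2(\mathbb T^d))$, products such as $\chi_1(\frac x\eps-z,\cdot)\nabla\nabla U$ need care. As in Lemma \ref{lem_3_i}, we bound their $L^2(\mathbb R^d)$-norm by integrating the periodic factor against the rapidly decaying Schwartz factor, cell by cell.
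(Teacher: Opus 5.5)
Your proposal is correct and follows essentially the same route as the paper. It uses the same ansatz $\kappa_1^\eps(t)\bigl[U(x^\eps,t)+\eps\chi_1(\frac x\eps,\frac t{\eps^2})\nabla U(x^\eps,t)\bigr]$, where the $\beta$-shifted corrector cancels the $\eps^{-1}$ terms, then Birkhoff's theorem with boundedness of $\theta$ to get $\sup_{[0,T]}|\kappa_1^\eps|\to0$ a.s., and finally the $L^2$ a priori estimate, with your remark that the difference has exactly zero initial data (since $\kappa_1^\eps(0)=0$) being slightly sharper than what the paper writes.
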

The proof of this and the next Lemma is provided in the Appendix.
\begin{lemma}\label{l_x_osc_small}
Let $\ell(\xi,s)$  be a stationary function of $s$ with values in $L^2(\mathbb T^d)$, and assume that the random
function $(\ell(\xi,s),\Lambda(\xi,\eta,s))$ is also stationary.  Assume that $\int_{\mathbb R^d}\ell(\xi,s)p(\xi,s)d\xi=0$
for all $s\in\mathbb R$.   Assume moreover that $U(x,t)$ and
its time derivative are Schwartz class functions for all $t\in[0,T]$.
Then  a solution $v_2^\eps$ of the Cauchy problem
\begin{equation}\label{eq_lemmm}
\partial_t v-L_t^\eps v=\ell\Big(\frac x\eps,\frac t{\eps^2}\Big)U(x^\eps,t),\qquad v(x,0)=0,
\end{equation}
converges to zero in $L^\infty(0,T;L^2(\mathbb R^d))$ as $\eps\to0$.
\end{lemma}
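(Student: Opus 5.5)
We prove Lemma \ref{l_x_osc_small} by a corrector argument: we remove the fast spatial oscillation of the source with a stationary cell function, paying only an $O(\eps)$-small error.

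\textbf{Step 1: the corrector.} Since $\int_{\mathbb T^d}\ell(\xi,s)p(\xi,s)\,d\xi=0$ for all $s$, \cite[Lemma 3.8]{piat:zhiz:25} applies exactly as it did for \eqref{eq:first_corrector_bis} and \eqref{eq:second_corrector_bis}. It gives a stationary solution $\zeta\in L^\infty(-\infty,\infty;L^2(\mathbb T^d))$ of
$$
\partial_s\zeta(\xi,s)-(L_s\zeta)(\xi,s)=\ell(\xi,s).
$$
Stationarity of the pair $(\zeta,\Lambda)$ follows from that of $(\ell,\Lambda)$ and the uniqueness, up to a constant, in that lemma.

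\textbf{Step 2: the ansatz.} We set
$$
\tilde v^\eps(x,t)=\eps^2\zeta\Big(\frac x\eps,\frac t{\eps^2}\Big)U(x^\eps,t),\qquad x^\eps=x-\frac\beta\eps t .
$$
We apply $\partial_t-L^\eps_t$ with the splitting of $L^\eps_t\kappa$ from the beginning of the symmetric section.
\begin{itemize}
\item The time derivative of the fast variable gives $\partial_s\zeta\,U$.
\item The fast part of $L^\eps_t$ gives $(L_s\zeta)U$; together with the previous term this reproduces exactly $\ell(\frac x\eps,\frac t{\eps^2})U(x^\eps,t)$.
\item Differentiating $U(x^\eps,t)$ in $t$ produces the drift term $-\eps\,\zeta\,\beta\cdot\nabla U(x^\eps,t)$ and the term $\eps^2\zeta\,\partial_tU(x^\eps,t)$.
\item The slow part of $L^\eps_t$ gives
$$
\eps\int_{\R^d}a(z)\Lambda\Big(\frac x\eps,\frac x\eps-z,\frac t{\eps^2}\Big)\,\zeta\Big(\frac x\eps-z,\frac t{\eps^2}\Big)\,z\cdot\int_0^1\nabla U(x^\eps-\eps qz,t)\,dq\,dz .
\end{itemize}
Hence
$$
(\partial_t-L^\eps_t)(v_2^\eps-\tilde v^\eps)=F^\eps,
$$
where $F^\eps$ collects the last three contributions, each carrying a factor $\eps$ or $\eps^2$.

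\textbf{Step 3: bounding $F^\eps$ and concluding.} We estimate the $L^2(\R^d)$-norm of $F^\eps$ as in Lemma \ref{lem_3_i}. The slow-part term is controlled by the Cauchy--Schwarz argument in $z$ used for \eqref{estima_w2}: it is bounded by $C\Lambda^+M_1$ times $\sup_s\|\zeta(\cdot,s)\|_{L^2(\mathbb T^d)}$ times a weighted norm of $\nabla U$. The drift term $\eps\zeta\beta\cdot\nabla U$ is likewise $O(\eps)$.

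One point needs care: $\|\zeta(\frac\cdot\eps,s)\,\phi(\cdot)\|_{L^2(\R^d)}$ must be bounded uniformly in $\eps$ when $\zeta(\cdot,s)$ is only in $L^2(\mathbb T^d)$. We split $\R^d$ into $\eps$-cells. On each cell we bound $\phi$ by its sup over a neighbourhood, which gives the bound
$$
C\|\zeta(\cdot,s)\|_{L^2(\mathbb T^d)}\sum_{k\in\mathbb Z^d}\eps^d\sup_{\eps(k+[0,1]^d)}|\phi|^2 .
$$
The sum is finite because $\phi$ is Schwartz, and this is unaffected by the shift $x^\eps$.

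Thus $\|F^\eps\|_{L^\infty(0,T;L^2)}\le C\eps$. The difference $v_2^\eps-\tilde v^\eps$ has initial value $-\tilde v^\eps(\cdot,0)$, whose norm is $O(\eps^2)$. The a priori estimate \cite[Proposition 4.1]{piat:zhiz:25} then gives $\|v_2^\eps-\tilde v^\eps\|_{L^\infty(0,T;L^2)}\le C\eps$. Since also $\|\tilde v^\eps\|\le C\eps^2$, we conclude $v_2^\eps\to0$ in $L^\infty(0,T;L^2(\R^d))$.

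\textbf{Main obstacle.} The main difficulty is the well-posedness and regularity of the corrector $\zeta$. We need $\zeta$ to be uniformly bounded in $L^\infty_s L^2(\mathbb T^d)$ and jointly stationary with $\Lambda$. This requires a uniform-in-$s$ bound on $\|\ell(\cdot,s)\|_{L^2(\mathbb T^d)}$, which we assume implicitly, as the paper does for $\chi_1$ and $\chi_2$. We also need the centering with respect to the invariant density $p$, rather than Lebesgue measure, to be exactly the solvability condition in \cite[Lemma 3.8]{piat:zhiz:25} for the nonsymmetric operator. Everything else is the routine estimation carried out in Lemma \ref{lem_3_i}.
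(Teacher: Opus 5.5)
Your proposal is correct and follows the paper's own proof. The paper likewise takes the stationary solution $\psi$ of $\partial_s\psi-L_s\psi=\ell$, which is solvable because $\ell$ is centred with respect to $p$. It subtracts the same ansatz $\eps^2\psi(\frac x\eps,\frac t{\eps^2})U(x^\eps,t)$ and concludes from the $O(\eps)$ residual via the a priori estimate of \cite[Proposition 4.1]{piat:zhiz:25}. Your cell-by-cell bound on $\|\zeta(\frac{\cdot}{\eps},s)\phi\|_{L^2(\mathbb R^d)}$ (state it with squared norms on both sides) and your observation that $\ell$ must be uniformly bounded in $L^2(\mathbb T^d)$ make explicit what the paper leaves implicit.
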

We have all the necessary tools to study the asymptotic behaviour of $R^{\eps,(2)}$.  In the same way as in the symmetric
case one can show that the $L^2(\mathbb R^d)$ norms of the third and the forth  terms on the right-hand side  of \eqref{eq:drift_part_bis} tend to zero as $\eps\to0$.  Due to the a priori estimates given by
\cite[Proposition 4.1]{piat:zhiz:25}, 
these terms do not contribute to the limit of $R^{\eps,(2)}$.  Let us define the tensor $H^{\rm eff,ns}$ by:
\begin{equation}\label{def_H_nsym}
H^{\rm eff,ns}=\mathbb{E}\bigg(\int_{\mathbb T^d}\big( H(\xi,s) - \chi_1(\xi,s)\otimes \Theta^{\rm eff,ns}
- \chi_2(\xi,s)\otimes\beta\big)p(\xi,s)\,d\xi\bigg);
\end{equation}
Thus using the identity $\partial_t u^0=\Theta^{\rm eff,ns}\nabla\nabla u^0$, it follows from Lemmata \ref{l_hom_rhs}--\ref{l_x_osc_small}, with
\begin{align*}
U(x,t) & = \nabla \nabla \nabla u^0(x,t), \\
\theta (s) & =  \int_{\mathbb T^d}\big( H(\xi,s) - \chi_1(\xi,s)\otimes \Theta^{\rm eff,ns}
- \chi_2(\xi,s)\otimes\beta\big)p(\xi,s)\,d\xi - H^{\rm eff,ns}, \\
\ell \Big( \xi,s \Big) & = H\Big( \xi,s\Big)  - \chi_1 \left( \xi,s\right)  \otimes \Theta^{\rm eff,ns} -\chi_2\left( \xi,s \right)\otimes\beta  -  \theta (s)
\end{align*}
that $R^{\eps,(2)}$ converges in $L^\infty(0,T;L^2(\mathbb R^d))$ to a solution of the problem
\begin{equation}\label{lim_r2}
\partial_t v=\mathrm{div}\big(\Theta^{\rm eff,ns}\nabla v\big)=H^{\rm eff,ns}\nabla\nabla\nabla u^0.
\end{equation}

We turn to studying the limit  behaviour of $R^{\eps,(1)}$. As in the previous section we denote
$$
\kappa^\eps(t) =\dfrac{1}{\eps} \int_0^t \widetilde \Theta \left( \dfrac{s}{\eps^2} \right) ds,
$$
with $\widetilde \Theta(s)$ defined in \eqref{eq:effective_matrix_nonsym}.
Since $p(\xi,s)$ is measurable w.r.t. $\mathcal F_{\geq s}$, that is w.r.t. the future of $\Lambda$, the arguments
used in the proof of Lemma \ref{lem:CLT} should be rearranged.
\begin{lemma}\label{lem_clt_nonsym}
The process $\kappa^\eps$ converges in law to $A^{{\rm eff, ns}} W$ where $A^{{\rm eff, ns}}$ is a $d^2\times d^2$-matrix and $W$ a $d^2$-dimensional standard Brownian motion.
\end{lemma}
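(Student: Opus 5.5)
As in the proof of Lemma \ref{lem:CLT}, we reduce the statement to the functional central limit theorem of \cite[Theorem VIII.3.79]{JaShi}. The process $\widetilde\Theta(s)$ in \eqref{eq:effective_matrix_nonsym} is stationary, centered and uniformly bounded. It therefore suffices to establish the integrability condition
$$
\int_0^\infty \big|\mathbb E\,\widetilde\Theta(s)\widetilde\Theta(0)\big|\,ds<\infty .
$$
The limit is then a Brownian motion with covariance $\mathcal C=\mathbb E\int_0^\infty(\widetilde\Theta(s)\widetilde\Theta(0)+\widetilde\Theta(0)\widetilde\Theta(s))ds$, and we set $A^{\rm eff,ns}=\mathcal C^{1/2}$.

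The new difficulty is that $\Theta(s)$ now involves $p(\cdot,s)$, which depends on the future of $\Lambda$. It also involves $\chi_1(\cdot,s)$ and, through $h$, the quantity $\beta$, which is deterministic by {\bf H6}. We use the asymmetric definition of $\rho$ and approximate the two factors from opposite sides. We approximate $\widetilde\Theta(0)$ by an $\mathcal F_{\le s/2}$-measurable quantity and $\widetilde\Theta(s)$ by an $\mathcal F_{\ge s/2}$-measurable quantity.

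\emph{Future-measurable approximation of $\widetilde\Theta(s)$.} We take $\widehat\chi_1$ to be the solution of \eqref{eq:first_corrector_bis} on $[s/2,s]$ with $\widehat\chi_1(\cdot,s/2)=0$. For $p$, we solve the backward adjoint problem $-\partial_r\widehat p=L_r^*\widehat p$ on $[s/2, 3s/2]$ with terminal data $\widehat p(\cdot,3s/2)=1$. The only $\Lambda$ involved is then from times $\ge s/2$, and the normalization $\int\widehat p\,d\xi=1$ is preserved.

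The exponential contraction of \cite[Lemma 3.5]{piat:zhiz:25}, applied to mean-zero perturbations, gives
$$
\|\widehat\chi_1(\cdot,s)-\chi_1(\cdot,s)\|_{L^2(\mathbb T^d)}\le Ce^{-\gamma_0 s/2}.
$$
Its adjoint version, used in \cite[Proposition 3.2]{piat:zhiz:25}, gives $\|\widehat p(\cdot,s)-p(\cdot,s)\|_{L^2}\le Ce^{-\gamma_0 s/2}$. For $\chi_1$ there is one extra point: it is normalized by \eqref{normaliz_chi1}, so we subtract $\int\widehat\chi_1\widehat p\,d\xi$ from $\widehat\chi_1$. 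This correction is also exponentially small.

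\emph{Past-measurable approximation of $\widetilde\Theta(0)$.} Here $\chi_1(\cdot,0)$ is already past-measurable, since it is the stationary solution of a forward equation. The factor $p(\cdot,0)$ is not, so we replace it by the solution of the backward problem on $[0,s/2]$ with terminal data $1$. This uses only $\Lambda$ on $[0,s/2]$, and it is $\mathcal F_{\le s/2}$-measurable after a time shift. Its error is again $O(e^{-\gamma_0 s/2})$.

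\emph{Assembling the bound.} Using $\Lambda\le\Lambda^+$, the bound on the first moment of $a$, and $p\le\pi_2$, we write $\widetilde\Theta(s)=\widehat\Theta(s)+\theta(s)$ and $\widetilde\Theta(0)=\check\Theta(0)+\theta'(0)$. Here $\widehat\Theta(s)$ and $\check\Theta(0)$ are the approximations, and $|\theta|,|\theta'|\le Ce^{-\gamma_0 s/2}$. By stationarity $\mathbb E\widehat\Theta(s)$ and $\mathbb E\check\Theta(0)$ are $O(e^{-\gamma_0 s/2})$, so after recentering we may apply the definition of $\rho$ with time lag $s/2-s/2$. The lag is awkward, so we adjust the splitting points to $s/3$ and $2s/3$. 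This gives
$$
|\mathbb E\widetilde\Theta(s)\widetilde\Theta(0)|\le C\rho(s/3)+Ce^{-\gamma_0 s/3},
$$
which is integrable by {\bf H3}.

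The main obstacle is the approximation of $p$. We need the backward problem for $L^*$ to contract exponentially toward the stationary density in $L^2(\mathbb T^d)$, uniformly in the data. We would try to derive this from \cite[Proposition 3.2]{piat:zhiz:25}, using duality with the forward contraction of \cite[Lemma 3.5]{piat:zhiz:25} on mean-zero functions. The bound $\pi_1\le p\le\pi_2$ lets us pass between weighted and unweighted norms.
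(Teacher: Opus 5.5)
Your proposal is correct and is essentially the paper's proof. Both split at $s/3$ and $2s/3$, replace $\chi_1(\cdot,s)$ by the forward solution started from zero at $2s/3$, replace $p(\cdot,0)$ by the backward adjoint solution with terminal value $1$ at $s/3$, recenter, and bound the covariance by $\rho(s/3)$ plus exponentially small terms.

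Two of your steps are redundant but harmless. The paper keeps the true $p(\cdot,s)$, which is already $\mathcal F_{\ge s}$-measurable, so it needs no approximation. Likewise, no renormalization of $\widehat\chi_1$ is needed: $\int_{\mathbb T^d}\widehat\chi_1(\xi,r)p(\xi,r)\,d\xi$ is conserved in $r$, because the source $g+\beta$ has zero $p$-mean, and it vanishes at $r=2s/3$.
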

\begin{proof}
As before, we prove that \eqref{eq:mixing_property} holds. The main difference is that $p$ is itself stochastic and $p(\cdot,s)$ depends on $\mathcal F_{\geq s}$ ; thus we need to adapt the proof of Lemma \ref{lem:CLT}. Again let us fix $s > 0$ and define $\widehat \chi_1$ as the solution of: for $r \in [2s/3,s]$
$$
\partial_r \widehat \chi_1(\xi,r) - L_r \widehat \chi_1(\xi,r) = g(\xi,r)+\beta,\quad \widehat \chi_1(\xi,2s/3) = 0;
$$
here $g(\xi,r)$ is given by \eqref{def_g}.
Note that $ \widehat \chi_1(\cdot,s)$ is $\mathcal F_{\geq 2s/3}$-measurable. And using \cite[Lemma 3.5]{piat:zhiz:25}, we deduce that there exists $C > 0$ and $\gamma_0 > 0$ s.t.
$$\| \widehat \chi_1(\cdot,s) - \chi_1(\cdot,s) \|_{L^2(\mathbb T^d)} \leq C e^{-\gamma_0 s}.$$
We define $\widehat p$ as the solution of
$$-\partial_r \widehat p = L^*_r \widehat p$$
on $[0,s/3]$, with the terminal condition $\widehat p(\xi,s/3)=1$. For any $t \in [0,s/3]$, $\widehat p(\cdot,t)$ is measurable w.r.t. $\mathcal F_{\leq s/3}$. Moreover the difference $p-\widehat p$ satisfies:
$$
\|p(\cdot,0) - \widehat  p(\cdot,0) \|_{L^2(\mathbb T^d)} \leq C e^{-\gamma_0 s}.
$$

Now we break $\widetilde \Theta(s)$ down as follows: $\widetilde \Theta(s)=\widehat \Theta(s) +\mathring \Theta(s)$
with
\begin{align*}
\widehat \Theta(s)  & =  \int_{\mathbb T^d} \left(   \int_{\R^d} a\left( z\right) \Lambda\left( \xi,  \xi -z,  s \right)  \left( \dfrac{1}{2} (z \otimes z) - \widehat \chi_1 \left( \xi -z, s \right)z  \right) dz+\beta\otimes  \widehat \chi_1( \xi,s)
\right) p(\xi,s)  d\xi \\
&  - \Theta^{{\rm eff, ns}},  \\
\mathring \Theta(s) & =  \int_{\mathbb T^d} \bigg(   \int_{\R^d} a( z) \Lambda( \xi,  \xi -z,  s) \big( \widehat \chi_1 \big( \xi -z, s ) - \chi_1 ( \xi -z, s)   \big) z dz \\
&+\beta\otimes\big(\chi_1(\xi, s)-\widehat \chi_1(\xi, s)\big) \bigg) p(\xi,s)  d\xi .
\end{align*}
Notice that $\widehat \Theta(s)$ is measurable with respect to  the $\sigma$-algebra $\mathcal F_{\geq 2s/3}$, whereas $|\mathring \Theta(s) | \leq C e^{-\gamma_0 s}$. We similarly decompose $\widetilde \Theta(0)$ as
$$\widetilde \Theta(0)  = \widehat \Theta(0) +\mathring \Theta(0),$$
with
\begin{align*}
\widehat \Theta(0)  & =  \int_{\mathbb T^d} \left(   \int_{\R^d} a( z) \Lambda\left( \xi,  \xi -z,  0 \right)  \big( \dfrac{1}{2} (z \otimes z) - \chi_1( \xi -z, 0 )z  \big) dz+\beta\otimes\chi_1(\xi,0) \right) \widehat p(\xi,0)  d\xi \\
& - \Theta^{{\rm eff, ns}} \\
\mathring \Theta(0) & =  \int_{\mathbb T^d} \left(   \int_{\R^d} a\left( z\right) \Lambda\left( \xi,  \xi -z,  s \right)   \left( \dfrac{1}{2} (z \otimes z) - \chi_1 ( \xi -z, 0)z  \right) dz+\beta\otimes\chi_1(\xi,0)  \right) \\
&\quad  (p(\xi, 0) - \widehat p(\xi,0) )  d\xi .
\end{align*}
Here $\widehat \Theta(0)$ is measurable with respect to  the $\sigma$-algebra $\mathcal F_{\leq s/3}$, whereas $|\mathring \Theta(0) | \leq C e^{-\gamma_0 s}$.

Now we compute
\begin{align*}
\mathbb E ( \widetilde \Theta(s) \widetilde \Theta(0) )&  =\mathbb E ( \widehat \Theta(s) \widehat \Theta(0)) + \mathbb E ( \mathring \Theta(s) \widehat \Theta(0)) + \mathbb E ( \widehat \Theta(s) \mathring \Theta(0))  + \mathbb E ( \mathring \Theta(s) \mathring \Theta(0)) \\
&  =\mathbb E ( \widehat \Theta(s) \widehat \Theta(0)) - \mathbb E ( \widehat \Theta(s)) \mathbb E (\widehat \Theta(0))  +  \mathbb E ( \widehat \Theta(s)) \mathbb E (\widehat \Theta(0))  \\
& \quad + \mathbb E ( \mathring \Theta(s) \widehat \Theta(0)) + \mathbb E ( \widehat \Theta(s) \mathring \Theta(0))  + \mathbb E ( \mathring \Theta(s) \mathring \Theta(0)) .
\end{align*}
From Condition {\bf H3}, since $ \widehat \Theta(s)$ is measurable w.r.t. $\mathcal F_{\geq 2s/3}$ and $ \widehat \Theta(0)$ is measurable w.r.t. $\mathcal F_{\leq s/3}$, we have
$$|\mathbb E ( \widehat \Theta(s) \widehat \Theta(0)) - \mathbb E ( \widehat \Theta(s)) \mathbb E (\widehat \Theta(0)) | \leq \rho(s/3). $$
Moreover we know that for any $r$, $\mathbb E \widetilde \Theta(r) = 0$. Hence from the estimate on $\mathring \Theta(s)$
$$\mathbb E \widetilde \Theta(s) = 0 = \mathbb E \widehat \Theta(s)  + \mathbb E \mathring \Theta(s) \Rightarrow | \mathbb E \widehat \Theta(s)  | \leq C e^{-\gamma_0 s}. $$
And a similar upper bound holds for $| \mathbb E \widehat \Theta(0)  |$. We deduce that
$$
|\mathbb E ( \widetilde \Theta(s) \widetilde \Theta(0) )| \leq \rho(s/3) + C e^{-\gamma_0 s}.
$$
This yields \eqref{eq:mixing_property},  and the desired convergence can be justified in the same way as in the proof of Lemma \ref{lem:CLT}. Letting
$$
\mathcal{C}^{\rm ns}=\int_0^\infty \mathbf{E}\big(\widetilde\Theta(s)\otimes\widetilde\Theta(0)+\widetilde\Theta(0)\otimes(\widetilde\Theta(s)\big)ds,
$$
we conclude that $A^{{\rm eff, ns}}$ is a square root of $\mathcal{C}^{\rm ns}$.
\end{proof}

The statement of the last lemma implies in the same way as in the symmetric case that $R^{\eps,(1)}$ converges in law
in the space $L^2(0,T; L^2(\mathbb R^d))$ to a solution of  the problem
\begin{equation} \label{eq:limit_SPDE_without_drift_ns}
d v_t -{\rm div }( \Theta^{{\rm eff,ns}} \nabla v ) dt= A^{{\rm eff,ns}} dW_t \nabla\nabla u^0, \quad v(x,0) = 0.
\end{equation}
Combining the above statements we arrive at the following result:
\begin{theorem} \label{thm:main_result_non_symm}
Let conditions  {\rm \bf{H1}}--{\rm \bf{H4}} and {\bf H6} be fulfilled, and assume that  $\beta$ is given by \eqref{eq:def_beta}. Then the process $U^\eps$ defined by
$$
U^\eps(x,t) = \dfrac{u^\eps(x,t)-u^0(x^\eps,t)}{\eps}  -  \chi_1 \left( \dfrac{x}{\eps}, \dfrac{t}{\eps^2} \right) \nabla
u^0(x^\eps,t)
$$
with $x^\eps = x - \dfrac{\beta}{\eps}t$, converges in law in $L^2(\mathbb R^d\times(0,T))$ to the unique solution of the SPDE
\begin{equation}\label{eq:limit_SPDE_bis}
dv_t - {\rm div} ( \Theta^{{\rm eff ,ns}} \nabla v) dt = A^{{\rm eff, ns}} dW_t  \nabla \nabla u^0 + H^{{\rm eff, ns}} \nabla \nabla \nabla u^0 dt, \quad v(x,0)=0,
\end{equation}
where $u^0$ is the solution of
\begin{equation} \label{eq:homo_pb_bis}
\partial_t u^0 - {\rm div} ( \Theta^{{\rm eff, ns}} \nabla u^0) = 0,  \quad u^0(x,0)=\imath(x),
\end{equation}
and
\begin{itemize}
\item the constant positive definite matrix $\Theta^{{\rm eff, ns}}$ is defined in \eqref{eq:effective_matrix_nonsym};
\item the $d^2\times d^2$ matrix $A^{{\rm eff, ns}} $ is the symmetric positive semi-definite square root of
$\mathcal{C}^{\rm ns}$;
\item the matrix $H^{{\rm eff, ns}} $ is given by \eqref{def_H_nsym}.
\end{itemize}
\end{theorem}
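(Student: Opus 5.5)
The proof assembles the pieces already established in this section. Start from the expansion \eqref{eq:expansion_2} with $x^\eps = x-\frac{\beta}{\eps}t$; here $\beta$ is deterministic and constant by {\bf H6}. Rearranging it gives
$$
U^\eps(x,t)=\eps\,\chi_2\Big(\frac x\eps,\frac t{\eps^2}\Big)\nabla\nabla u^0(x^\eps,t)+\frac1\eps R^\eps(x,t)
=\eps\,\chi_2\Big(\frac x\eps,\frac t{\eps^2}\Big)\nabla\nabla u^0(x^\eps,t)+R^{\eps,(1)}+R^{\eps,(2)},
$$
where the second equality uses the representation of $R^\eps$ from the lemma preceding Lemma \ref{l_hom_rhs}. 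The first term is $O(\eps)$ in $L^\infty(0,T;L^2(\mathbb R^d))$, uniformly in $\omega$. To see this, note that $\chi_2\in L^\infty(\mathbb R;L^2(\mathbb T^d))$ and that $\nabla\nabla u^0(\cdot,t)$ is a Schwartz function whose $L^2$ norm is unaffected by the shift $x\mapsto x^\eps$. The product of a periodic $L^2$ function of $x/\eps$ with a Schwartz function is bounded in $L^2(\mathbb R^d)$ by splitting into unit cells, exactly as in \eqref{estima_w2}. Hence it suffices to identify the joint limit of $R^{\eps,(1)}+R^{\eps,(2)}$.

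For $R^{\eps,(2)}$ I use the convergence already derived above. By Lemmata \ref{l_hom_rhs}--\ref{l_x_osc_small}, $R^{\eps,(2)}$ converges in $L^\infty(0,T;L^2(\mathbb R^d))$ to the solution of \eqref{lim_r2} with right-hand side $H^{\rm eff,ns}\nabla\nabla\nabla u^0$. The convergence holds a.s., or at least in probability. The initial datum $\frac1\eps R^\eps(x,0)$ contains the non-small term $-\chi_1(\frac x\eps,0)\nabla u^0(x,0)$. I treat it exactly as $r^{\eps,(2)}$ was treated in the symmetric case. Define $\mathcal I$ by solving the cell Cauchy problem with initial datum $-\chi_1(\cdot,0)$, which satisfies $\int\mathcal I(\xi,0)p(\xi,0)d\xi=0$ by \eqref{normaliz_chi1}. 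Lemma 3.5 of \cite{piat:zhiz:25}, in its $p$-weighted form, gives exponential decay of $\mathcal I$. The same energy argument then shows this contribution is $O(\eps^{1/2})$.

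For $R^{\eps,(1)}$ I repeat the proof of Lemma \ref{lem:approx_sol} with $\nabla\nabla u^0$ evaluated at $x^\eps$. The ansatz is $\kappa^\eps(t)\nabla\nabla u^0(x^\eps,t)$ plus the corrector terms. Its residual terms have either the structure of Lemma \ref{l_timeosc_small}, with $\theta=\widetilde\Theta\,\chi$-type products, or that of Lemma \ref{l_x_osc_small}. The mean-zero condition for Lemma \ref{l_x_osc_small} is now taken with weight $p$, which holds by \eqref{normaliz_chi1}. This yields $R^{\eps,(1)}-\kappa^\eps\nabla\nabla u^0(x^\eps,\cdot)\to0$ in probability.

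Combining this with Lemma \ref{lem_clt_nonsym} requires one further observation. $R^{\eps,(2)}$ converges to a deterministic limit, but it sits at $x^\eps$. So I compare $R^{\eps,(1)}+R^{\eps,(2)}$ with $\kappa^\eps\nabla\nabla u^0+w$, where $w$ solves \eqref{lim_r2}, all evaluated along the frame $x^\eps$. By Slutsky's lemma this converges in law to $A^{\rm eff,ns}W_t\nabla\nabla u^0+w$, and Itô's formula identifies this as the solution of \eqref{eq:limit_SPDE_bis}. Uniqueness follows from \cite[Theorem 5.4]{dp_zab2014}.

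The main obstacle is the moving frame. If $\beta\neq0$, then a function of $x^\eps$ does not converge in law in $L^2(\mathbb R^d\times(0,T))$ in the fixed frame, since the shift $\beta t/\eps$ diverges. So the convergence in the statement must be understood in moving coordinates, i.e. for $U^\eps(x+\frac\beta\eps t,t)$. I would make this reinterpretation explicit: translation is an isometry of $L^2$, so every estimate above transfers. I would then verify that $\Lambda(\frac{x}{\eps}+\frac{\beta t}{\eps^2},\dots)$ still satisfies the hypotheses of Lemmata \ref{l_hom_rhs}--\ref{l_x_osc_small}; this is the one point where the appendix arguments need rechecking.
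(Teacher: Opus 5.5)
Your proposal is correct and follows the paper's route. The paper likewise uses the splitting $\frac1\eps R^\eps=R^{\eps,(1)}+R^{\eps,(2)}$, Lemmata \ref{l_hom_rhs}--\ref{l_x_osc_small} for $R^{\eps,(2)}$, and Lemma \ref{lem_clt_nonsym} plus a rerun of the proof of Lemma \ref{lem:approx_sol} for $R^{\eps,(1)}$. One correction to your summary: the residual term $\kappa^\eps(t)\widetilde\Theta(t/\eps^2)\nabla^{(4)}u^0(x^\eps,t)$ is not covered by Lemma \ref{l_timeosc_small} and still needs the primitive $\frac\eps2(\kappa^\eps)^2$, as in the symmetric case.

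Your treatment of the $O(1)$ initial datum and your moving-frame reading $U^\eps(x+\frac\beta\eps t,t)$ rightly make explicit what the paper leaves implicit. Because of the initial layer, that convergence holds in $L^2(\R^d\times(0,T))$, not in $L^\infty(0,T;L^2)$. No rechecking of the appendix is needed either: Lemmata \ref{l_hom_rhs}--\ref{l_x_osc_small} are already stated in the fixed frame with data evaluated at $x^\eps$, so you translate only at the very end. Rewriting the operator in the moving frame would actually break the hypotheses, since $\Lambda(\xi+\beta s,\eta+\beta s,s)$ is in general not stationary in $s$.
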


\section{Appendix}

Here we provide the proof of Lemmata \ref{l_timeosc_small} and \ref{l_x_osc_small}.

\medskip\noindent
\begin{proof}[Proof of Lemma \ref{l_timeosc_small}]\\
Denote
$$
\kappa_1^\eps(t)=\int_0^t\theta\Big(\frac s{\eps^2}\Big)ds, \quad
\widetilde U^\eps(x,t)=\kappa^\eps_1(t) \Big[U(x^\eps,t)+\eps\chi_1\Big(\frac x\eps, \frac t{\eps^2}\Big)\nabla U(x^\eps,t)\Big].
$$
Substituting $(\widetilde U^\eps(x,t)-v_1^\eps)$ for $v$ in \eqref{eq_lem9} after straightforward rearrangements we obtain
\begin{align*}
& \partial_t(\widetilde U^\eps(x,t)-v_1^\eps)-L_t^\eps(\widetilde U^\eps(x,t)-v_1^\eps)=\eps\theta\Big(\frac t{\eps^2}\Big)
\chi_1\Big(\frac x\eps, \frac t{\eps^2}\Big)\nabla U(x^\eps,t) \\
& \quad
+\kappa_1^\eps(t) \bigg[\int_{\mathbb R^d}a(z)\Lambda\Big(\frac x\eps,\frac x\eps-z,\frac t{\eps^2}\Big)\bigg\{z\otimes z
\int_0^1\nabla\nabla U(x^\eps-\eps qz)(1-q)dq \\
& \quad  -\chi_1\Big(\frac x\eps-z, \frac t{\eps^2}\Big)\otimes z\int_0^1\nabla\nabla U(x^\eps-\eps qz)dq \bigg\}dz
\bigg]-\beta \kappa_1^\eps(t)  \chi_1\Big(\frac x\eps, \frac t{\eps^2}\Big)\nabla\nabla U(x^\eps,t)  \\
& \quad  +  \kappa_1^\eps(t) \Big[\partial_t U(x^\eps,t)+\eps\chi_1\Big(\frac x\eps,\frac t{\eps^2}\Big)\nabla\partial_t U(x^\eps,t)\Big].
\end{align*}
By the Birkhoff ergodic theorem and due to boundedness of $\theta(s)$, the process $\kappa_1^\eps(t)$ a.s. converges to
zero in $C[0,T]$. Therefore, the right-hand side in the last equation a.s. tends to zero in $L^\infty(0,T; L^2(\mathbb R^d))$.
Also we have $\widetilde U^\eps(x,0)-v_1^\eps(x,0)=\eps \chi_1(\frac x\eps,0)U(x,0)\to 0$ in $L^2(\mathbb R^d)$.
This implies that $\|\widetilde U^\eps(x,t)-v_1^\eps(x,t)\|_{L^\infty(0,T;L^2(\mathbb R^d))}\to0$ a.s. as $\eps\to0$,
which in turn yields the desired statement.
\end{proof}

\medskip\noindent
\begin{proof}[Proof of Lemma \ref{l_x_osc_small}]\\
Under the conditions of lemma the equation
$$
\partial_s\psi(\xi,s)-L_s\psi(\xi,s)=\ell(\xi,s)
$$
has a unique up to an additive constant stationary solution  $\psi\in L^\infty((-\infty,+\infty); L^2(\mathbb T^d))$.
Substituting the difference  $\big(\eps^2\psi(\frac x\eps,\frac t{\eps^2})U(x^\eps,t) -v_2^\eps(x,t)\big)$ for $v$
in \eqref{eq_lemmm}, in the same way as in the proof of the previous lemma we conclude  that
$$
\lim\limits_{\eps\to0}\|v_2^\eps(x,t)\|_{L^\infty(0,T;L^2(\mathbb R^d))}=
\lim\limits_{\eps\to0}\|v_2^\eps(x,t)-\eps^2\psi(\frac x\eps,\frac t{\eps^2})U(x^\eps,t)\|_{L^\infty(0,T;L^2(\mathbb R^d))}=0.
$$
\end{proof}

\bibliographystyle{abbrv}
\bibliography{recherchebib}

\end{document}